\newtheorem{definition}{Definition}[section]
\newtheorem{theorem}[definition]{Theorem}
\newtheorem{proposition}[definition]{Proposition}
\newtheorem{corollary}[definition]{Corollary}
\theoremstyle{definition}
\newtheorem{remark}[definition]{Remark}
\newtheorem{conjecture}[definition]{Conjecture}
\newtheorem{hypothesis}{Hypothesis}
\title{Stochastic persistence and extinction for degenerate stochastic Rosenzweig-MacArthur model}
\author{Michel Benaïm$^1$, Jérémy Colombo$^1$, Edouard Strickler$^2$ \\ 
\small $^1$ Institut de Mathématiques, Université de Neuchâtel, Switzerland\\
\small $^2$ IECL, Université de Lorraine, CNRS, Inria, Nancy, France.}
\date{\today}
\begin{document}

\maketitle

\justifying

\begin{abstract}
 \justifying{

 We consider the classical two-dimensional Rosenzweig–MacArthur prey–predator model \cite{Rosenzweig1963_Model} with a degenerate noise, whereby only the prey variable is subject to small environmental fluctuations. This model has already been introduced in \cite{Benaim2018_Persistence} and partially investigated by exhibiting conditions ensuring persistence.
 
 In this paper, we extend the results to study the conditions for persistence, the uniqueness of an invariant probability measure supported on the interior of $\mathbb R^2_+$ with a smooth density, and convergence in Total variation at a polynomial rate.

 Our contribution lies in providing a convergence rate in the case of persistence, as well as detailing the situations involving the extinction of one or both species. We also specify all the proofs of the intermediary results supporting our conclusions that are lacking in \cite{Benaim2018_Persistence}.
 }

\end{abstract}

\noindent\textbf{Keywords:} Markov processes; stochastic differential equations; stochastic persistence; prey-predator; Rosenzweig-MacArthur model; Hörmander condition; rate of convergence; degenerate noise; extinction\\

\noindent\textbf{\href{https://mathscinet.ams.org/mathscinet/msc/msc2020.html}{MSC2020 Subject classifications}:} Primary 60J60; Secondary 37H15, 92D25, 60J35, 47D07, 37H30.

\tableofcontents

\section{Introduction}
We consider the classical two-dimensional Rosenzweig–MacArthur prey–predator model, where $x_1$ (respectively $x_2$) denotes prey density (respectively predator density), in which only the prey variable $x_1$ is subject to environmental fluctuations modeled by a Brownian motion. 

 This assumption reflects the fact that prey growth is highly dependent on random environmental factors, whereas predator dynamics, which are slower and dependent on prey, remain deterministic. Prey can be seen as small animals or plants, reacting more directly and quickly to environmental changes than predators, so we assume they are negligibly affected by sources of ambient randomness. In fact, one can see the prey as the resources of the predator. Other examples of dynamic population models of this type can be found in \cite{PDMP_Antoine} or \cite{Benaim_Antoine_2022}.

Namely, we consider the degenerate SDE system defined on $\mathbb R^2_+$ by
\begin{equation}\label{eq:rosenzweig-MacArthur-model-in-details}
 \begin{cases}
  \mathrm dx_1 = x_1 \left( \left(1 - \frac{x_1}{\kappa} - \frac{x_2}{1+x_1}\right) \mathrm dt + \varepsilon\,\mathrm d B_t\right), \\
  \mathrm dx_2 = x_2 \left(-\alpha +\frac{x_1}{1 + x_1}\right) \mathrm dt,
 \end{cases}
\end{equation}
  
where $\kappa$, $\varepsilon>0$, $0<\alpha< 1$, and $(B_t)_{t\geq0}$ is a standard one-dimensional Brownian motion. Remark that if $\alpha > 1$, $x_2(t)$ goes to $0$ almost surely as $t\to\infty$ since $\dot x_2(t)\leq x_2(t)(-\alpha +1)$.

In case of absence of noise, i.e. $\varepsilon = 0$, the behavior of (\ref{eq:rosenzweig-MacArthur-model-in-details}) is well-known (see e.g. \cite{smith2008rosenzweig}): 
\begin{itemize}
    \item Every trajectory starting from $x_1=0$ converges to the origin.
    \item If $\alpha \geq \frac{\kappa}{1 + \kappa}$, every solution starting from $\mathbb{R}_+^2 \setminus \{(0,0)\}$ converges to $(\kappa,0)$. In other words, predators go extinct while preys stabilize to a density equilibrium given by $\kappa$.
    \item If $\alpha<\frac{\kappa}{\kappa+1}$, the ODE admits a unique positive equilibrium $p=\left(\frac{\alpha}{1-\alpha}, \frac{\kappa-(\kappa+1)\alpha}{\kappa(1-\alpha)^2}\right).$  In addition:
\begin{itemize}
    \item 
     If $\alpha<\frac{\kappa-1}{1+\kappa}$, $p$ is a source and there exists a limit cycle $\gamma\subset \text{int}(\mathbb R_+^2):=\mathbb R_+^*\times \mathbb R_+^*$ surrounding $p$ and every solution starting from int$(\mathbb R_+^2)\setminus \{p\}$ has $\gamma$ as its limit set. 
    \item Otherwise, if $\frac{\kappa}{\kappa+1} > \alpha\geq\frac{\kappa-1}{1+\kappa}$, every solution starting from int$(\mathbb R_+^2)$ converges to $p$.
\end{itemize}
\end{itemize}
For $0 < \varepsilon^2 < 2$, define $k:= \frac{2}{\varepsilon^2} - 1 > 0$, $\theta := \frac{\varepsilon^2 \kappa}{2} < \kappa$, and $$\gamma_{\varepsilon, \kappa}(x) = \frac{x^{k-1} e^{-x/\theta}}{\Gamma(k)\, \theta^k}, \quad x \geq 0.$$

This is the density of a $\Gamma$-distribution with parameters $k, \theta$ and it represents the stationary distribution of the prey population in the absence of predators when $0<\varepsilon^2<2,$ as it will be demonstrated in Proposition \ref{prop:1-dim-logistic-results}\textit{\textbf{(ii)}}. We also define the long-term growth rate of the predator population as $$\Lambda(\varepsilon, \alpha, \kappa) = \int_0^{+\infty} \frac{x}{1 + x} \gamma_{\varepsilon, \kappa}(x)\, \mathrm dx - \alpha,$$ which represents the average per-capita growth rate of predators under the stationary prey distribution.

\subsection{Purpose of the article}

Our interest in the Rosenzweig-MacArthur model (\ref{eq:rosenzweig-MacArthur-model-in-details}) stems from the fact that it represents a degenerate SDE evolving on a non-compact state space. We will outline the conditions ensuring that degenerate models of the form of (\ref{eq:rosenzweig-MacArthur-model-in-details}) reflect typical ergodic properties. Specifically for the Rosenzweig-MacArthur model (\ref{eq:rosenzweig-MacArthur-model-in-details}), main properties can be summarized as follows:
\begin{enumerate}[label=(\textbf{\roman*})]
    \item If $0<\varepsilon^2<2$ and $\Lambda(\varepsilon, \alpha, \kappa)>0$ , $(x_1(t), x_2(t))_{t\geq 0}$ is \emph{stochastically persistent} in the sense that the law of $(X_t^x)_{t\geq 0}$ converges to a unique invariant probability measure, supported on the interior of $\mathbb R_+^2$ and with a smooth density with respect to the Lebesgue measure, whenever $x=(x_1(0), x_2(0))\in\text{int}(\mathbb R_+^2)$ is the initial condition of the system.
    \item If $0<\varepsilon^2<2$ and $\Lambda(\varepsilon, \alpha, \kappa)<0$, then $(x_2(t))_{t\geq 0}$ converges almost surely to $0$.
    \item If $\varepsilon^2>2$, then both $x_1(t)$ and $x_2(t)$ converge almost surely to $0$.
\end{enumerate}

The specific case $\Lambda(\varepsilon, \alpha, \kappa)=0$ has been investigated in \cite{doi:10.1137/20M131134X} as a critical case for (\ref{eq:rosenzweig-MacArthur-model-in-details}) where the authors proved that the predators go extinct in average, in the sense that, for all initial condition, almost surely, $$\lim_{t \to \infty} \frac{1}{t}\int_0^t x_2(s) ds = 0.$$

Note that the degenerate case studied here does not introduce new biological behaviours: it only makes the analysis technically more delicate as we will see later in this article. The fully non–degenerate setting is actually simpler, since hypoellipticity holds automatically.

\subsection{Outline of the article}

The structure of this article is as follows: Section \ref{section:results} summarizes the main results of this paper about persistence and extinction regarding (\ref{eq:rosenzweig-MacArthur-model-in-details}).

Section \ref{section:preliminaries} introduces the notation, establishes well-posedness of (\ref{eq:rosenzweig-MacArthur-model-in-details}) and recalls the notion of (extended) generators and carré du champ operators. We also detail our main standing assumption, Hypothesis \ref{hyp:main-hyp} and some of its consequences.

Section \ref{section:persistence} covers the notion of stochastic persistence and the way to use tools such as $H-$exponents in the case of (\ref{eq:rosenzweig-MacArthur-model-in-details}). Section \ref{section:convergence} introduces the relation with the almost sure convergence  (respectively in Total variation) of the empirical occupation measures (respectively the distribution) to the unique invariant probability measure on $\text{int}(\mathbb R_+^2)$. 

Section \ref{section:in-practice} presents how to verify stochastic persistence and extinction for the one-dimensional logistic SDE: it will support the proof of the extinction of one species in (\ref{eq:rosenzweig-MacArthur-model-in-details}). We also prove the extinction of both species and the persistence of (\ref{eq:rosenzweig-MacArthur-model-in-details}).

Section \ref{section:convergence-rate} details how to achieve a polynomial convergence in Total variation, in the specific case of non-compact extinction sets under a stronger Hypothesis \ref{hyp:main-hyp-strong}. We also explain why an exponential rate of convergence is out of reach in our framework.

For readability, some proofs as well as the intermediary and useful results are postponed in Appendix \ref{section:appendix}.

\section{Main results}\label{section:results}

Let $M:=\mathbb R_+^2$ be the state space of (\ref{eq:rosenzweig-MacArthur-model-in-details}). By standard results about finite-dimensional SDE with locally Lipschitz parameters, for each $x\in M$, there exists a unique strong solution $(X_t^x)_{t\geq 0}\subset M$ to (\ref{eq:rosenzweig-MacArthur-model-in-details}) with $X_0^x=x$ (see Proposition \ref{prop:hyp-LU-implies}) with Markov semigroup $(P_t)_{t\geq 0}$ defined for all measurable bounded functions $f$ as $$P_tf(x)=\mathbb E(f(X^x_t)), \quad \forall t\geq 0, \ x\in M.$$

For $I\subset\{1,2\}$, let $$M_0^I=\left\{x\in M: \prod_{i\in I}x_i=0\right\},$$ describes our \emph{extinction sets} while $$M_+^I:=\mathbb R_+^{2}\setminus M_0^I=\{x\in M: x_i>0, \forall i\in I\},$$ are our \emph{non-extinction sets}. If $\mathcal M$ denotes any of those subsets of $ M$, we remark that $\mathcal M$ is \emph{invariant} under $(P_t)_{t\geq 0}$ in the sense that $$P_t\mathbf{1}_{\mathcal M}=\mathbf{1}_{\mathcal M}, \quad \forall t\geq 0,$$ see Remark \ref{rem:invariant-extinction-set}. To shorten notations, we set $M_0^{(1,2)}=M_0$, $M_+^{(1,2)}=M_+$, $M^{\emptyset}_0=\emptyset$, and $M^{\emptyset}_+=M$.

Let $\mathcal P(M)$ be the space of probability measures on $( M,\mathcal B( M))$ where $\mathcal{B}(M)$ stands for the set of Borel subsets of $M$. A probability measure $\mu\in \mathcal P(M)$ is said to be invariant under $(P_t)_{t\geq 0}$ if $\mu P_t=\mu$, for all $t\geq 0$.

Recall that a sequence $(\mu_n)_{n\geq 0}\subset\mathcal P(M)$ is said to \emph{converge weakly} to $\mu\in\mathcal P(M)$ if $$\lim_{n\to\infty }\mu_n f=\mu f, \quad \forall f\in C_b( M),$$ where $C_b( M)$ is the set of continuous, bounded functions of $ M$ and we write $\mu_n\Rightarrow\mu$. Also, the \emph{Total variation distance} between $\alpha,\beta\in\mathcal{P}(M)$ is defined by $$\lVert\alpha-\beta\rVert_{\mathrm{TV}}:=\sup_{\|f\|_{\infty}\leq 1}|\alpha f-\beta f|,$$
where the $\sup$ is taken over measurable bounded functions $f$. Define the \emph{empirical occupation measures} $(\Pi^{x}_{t})_{t\geq 0}$ of the process $(X^{x}_{t})_{t\geq 0}$ by
\begin{equation}\label{eq:empirical-measures-definition-first}
 \Pi^{x}_{t}(B)=\frac{1}{t}\int_{0}^{t}\mathbf{1}_{\{X^{x}_{s}\in B\}}\,d s,\quad B\in\mathcal{B}(M).
\end{equation}

Thus $\Pi^{x}_{t}(B)$ is the fraction of time the process spends in $B$ up to time $t$. \\

Now, we establish a direct condition on $\Lambda(\varepsilon, \alpha, \kappa)$ to ensure the persistence of the model regarding $M_+$. Figure \ref{fig:stoch-r-m-model-persistence}, obtained in Python, illustrates the behavior of the process when $\Lambda(\varepsilon, \alpha, \kappa) > 0$ and $0<\varepsilon^2<2$.
\begin{figure}[H]
 \centering
 \includegraphics[width=0.5\textwidth]{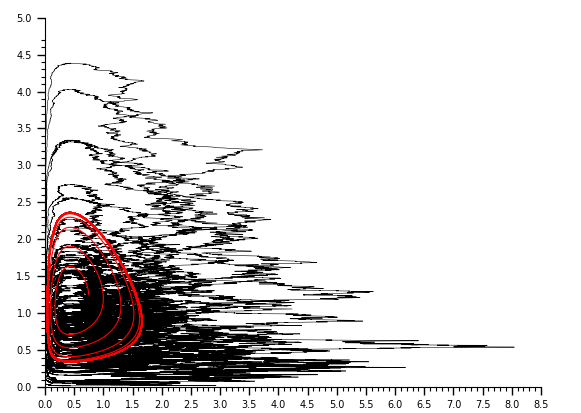}
 \caption{Simulation of (\ref{eq:rosenzweig-MacArthur-model-in-details}) starting at $(x_1(0),x_2(0))=(0.75,1.25)$ in persistence case with $\Lambda(\varepsilon=0.6,\ \alpha=0.3, \ \kappa=2.5) \approx 0.34 > 0$. The red trajectory is a trajectory of the deterministic system (i.e. for $\varepsilon=0$) while the black one describes the system (\ref{eq:rosenzweig-MacArthur-model-in-details}) with an Euler–Maruyama scheme.}
 \label{fig:stoch-r-m-model-persistence}
\end{figure}

\begin{theorem}[\textbf{Persistence}]\label{thm:rosenzweig-MacArthur-model-persistence}
    Suppose that $0<\varepsilon^2<2$ and $\Lambda(\varepsilon, \alpha, \kappa) > 0$. Then, there exists a unique invariant probability measure $\Pi$ on $ M_+$ such that, for all initial condition $x\in M_+$:
    \begin{enumerate}[label=(\textbf{\roman*})]
        \item $(\Pi_t^x)_{t\geq 0} \Rightarrow \Pi$, almost surely.
        \item For all $f\in L^1(\Pi)$ such that $\int_0^T f(X_s^x)ds<\infty$ for all $T>0$, $$\Pi_t^x f \underset{t\to\infty}{\longrightarrow} \Pi f, \quad \text{almost surely}.$$
       Moreover, for all $\theta <\frac{2}{\kappa \varepsilon^2}$, $(x_1,x_2)\mapsto e^{\theta(x_1+x_2)}$ lies in $L^1(\Pi)$, i.e. $\int_{ M_+}e^{\theta(x_1+x_2)}\Pi(\mathrm dx_1\mathrm dx_2)<\infty.$
        \item $(P_t(x,\cdot))_{t\geq0}$ converges in Total variation towards $\Pi$ at a polynomial rate, in the sense that there exists $\lambda >0$ such that $$\lim_{t\to\infty}t^{\lambda}||P_t(x,\cdot)-\Pi(\cdot)||_{TV}=0.$$
        \item $\Pi$ has a smooth density (with respect to the Lebesgue measure), strictly positive, on $ M_+$.
\end{enumerate}
\end{theorem}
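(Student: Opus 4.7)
The plan is to combine the general machinery on stochastic persistence and ergodicity developed in Sections \ref{section:persistence}--\ref{section:convergence} with a Hörmander-type regularity/support argument on $M_+$ and an exponential Lyapunov function. Items (i), (ii), (iv) will be obtained in this way, while (iii) is the main difficulty and requires the sub-geometric framework of Section \ref{section:convergence-rate}.

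First, I would verify hypoellipticity of the generator on $M_+$. With diffusion vector field $\sigma(x) = (\varepsilon x_1, 0)^T$ and drift $b$ read from (\ref{eq:rosenzweig-MacArthur-model-in-details}), a direct computation gives $[b,\sigma]^2(x) = -\varepsilon x_1 x_2 (1+x_1)^{-2}$, which is nonzero on $M_+$. Hence $\{\sigma(x), [b,\sigma](x)\}$ spans $\mathbb{R}^2$ at every $x \in M_+$, so by Hörmander's theorem $P_t(x,\cdot)$ admits a smooth density on $M_+$. Strict positivity would follow from the strong Feller property combined with a Stroock--Varadhan support argument showing that every open subset of $M_+$ is reachable from any $x \in M_+$; this yields (iv) and the topological irreducibility needed later for uniqueness.

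Next I would build the invariant measure and derive (i) and the first part of (ii). The ergodic invariant measures supported on $M_0$ are $\delta_{(0,0)}$ and $\gamma_{\varepsilon,\kappa} \otimes \delta_0$ on the prey-axis (via Proposition \ref{prop:1-dim-logistic-results}). Their transverse growth rates are $1 - \varepsilon^2/2 > 0$ (prey at the origin) and $\Lambda(\varepsilon,\alpha,\kappa) > 0$ (predator on the prey-axis), so the persistence criterion via $H$-exponents of Section \ref{section:persistence} identifies $M_0$ as an accessible repeller. A Lyapunov drift inequality $\mathcal{L}V_\theta \leq C - c V_\theta$ for $V_\theta(x) = e^{\theta(x_1+x_2)}$ with $\theta < 2/(\kappa\varepsilon^2)$ then yields tightness: the Itô-correction term $\tfrac{1}{2}\varepsilon^2 \theta^2 x_1^2$ is strictly dominated by the logistic contribution $-\theta x_1^2/\kappa$. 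Existence of an invariant $\Pi$ on $M_+$ follows; uniqueness comes from irreducibility plus strong Feller; item (i) and the ergodic averages in (ii) then follow from the convergence results of Section \ref{section:convergence}, and the exponential moment bound $\int V_\theta\, d\Pi < \infty$ is obtained by integrating the drift inequality against $\Pi$.

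The principal obstacle is (iii). Since $M_0$ is non-compact (the prey-axis is unbounded) and the one-dimensional logistic process on it is only polynomially mixing near infinity, no uniform minorization outside a fixed compact set can hold and a geometric rate is excluded, as already noted in the article. My plan is to invoke the sub-geometric ergodicity theorem of Section \ref{section:convergence-rate} under the strengthened Hypothesis \ref{hyp:main-hyp-strong}: exhibit a Lyapunov function satisfying a sub-linear drift inequality $\mathcal{L}W \leq K - \phi(W)$ with $\phi$ concave, built by combining $V_\theta$ (for control at infinity) with a factor quantifying the relaxation time from states near $M_0$. Petiteness of compact subsets of $M_+$ is a direct consequence of the smooth positive density obtained in (iv). The polynomial exponent $\lambda$ in (iii) is then read off from the specific choice of $\phi$.
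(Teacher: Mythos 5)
Your approach for items (i), (ii) and (iv) is essentially the one taken in the paper: exponential Lyapunov function $U_\theta(x)=e^{\theta(x_1+x_2)}$ with $\theta<2/(\kappa\varepsilon^2)$ for Hypothesis \ref{hyp:main-hyp}, the Hofbauer criterion of Theorem \ref{thm:invasion-criterion} applied to the two ergodic boundary measures $\delta_{(0,0)}$ and $\gamma_{\varepsilon,\kappa}\otimes\delta_0$, the Lie-bracket/strong Hörmander verification of Proposition \ref{prop:strong-hormander-condition-rosenzweig-MacArthur-model}, and the accessibility argument via the deterministic control system. One small imprecision: at $\delta_{(0,0)}$ the two invasion rates have opposite signs ($\lambda_1(0,0)=1-\varepsilon^2/2>0$ but $\lambda_2(0,0)=-\alpha<0$), so the Hofbauer condition~\eqref{eq:Hofbauer-cond} is \emph{not} automatic; you need to choose the weights $p_1,p_2$ so that $p_1(1-\varepsilon^2/2)>p_2\alpha$. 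Phrasing this as ``$M_0$ is an accessible repeller because the transverse growth rates are positive'' skips the fact that one of them is negative at the origin; the whole point of the weighted criterion is to balance a sign.

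The real gap is in item (iii). You propose to build the sub-geometric Lyapunov function $W$ by ``combining $V_\theta$ (for control at infinity) with a factor quantifying the relaxation time near $M_0$''. This will not work, because $U_\theta(x)=e^{\theta(x_1+x_2)}$ fails the carré-du-champ control required by the sub-geometric theorem (Theorem \ref{thm:polynomial-conv-rate}, Hypothesis \ref{hyp:main-hyp-strong}). Indeed,
\[
\Gamma(U_\theta)(x_1,x_2)=\varepsilon^2\theta^2\,x_1^2\,U_\theta^2(x_1,x_2),
\]
so $\Gamma(U_\theta)/U_\theta^2$ is unbounded and condition~\eqref{eq:gamma-condition-just-for-convergence-rate} $\Gamma(U)\leq cU^2$ is violated. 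Any construction of $W$ whose far-field behavior is dominated by an exponential runs into the same obstruction: the drift inequality $LU_\theta\leq -aU_\theta+b$ is fine, but the quadratic variation of $U_\theta$ along the degenerate noise (which enters only through $x_1$) grows too fast. The paper resolves this by switching to a \emph{polynomial} Lyapunov function $U(x)=1+(x_1+x_2)^n$ for $n>2$, for which $\Gamma(U)\leq\varepsilon^2 n^2 U^2$ and the remaining conditions~\eqref{eq:U-ln-x-positive}--\eqref{eq:F-i-bounded-U} hold, at the price of losing the exponential moment (the exponential moment in (ii) is still obtained from the original $U_\theta$ via Proposition~\ref{prop:conditions-hypotheses-hold}(ii), but the rate argument uses the polynomial one). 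Your proposal should acknowledge this switch; as written, the plan for (iii) would stall precisely at the verification of the strengthened hypothesis.
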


Its proof is detailed in Section \ref{section:appendix-mainresults}.\\

Then, we are interested in the extinction situation. We will see that the one-dimensional logistic SDE controls the behavior of the prey and we prove the extinction of one or both species with a comparison theorem for SDEs.

In particular, if the condition $\Lambda(\varepsilon, \alpha, \kappa) > 0$ is not respected, we show the almost-sure convergence of $(X_t^x)_{t\geq 0}$ to $M_0^{(2)}:=\{x\in M:x_2=0\}$.

\begin{theorem}[\textbf{Extinction of species $x_2$}]\label{thm:rosenzweig-MacArthur-model-extinction-1}
    If $0<\varepsilon^2<2$ and $\Lambda(\varepsilon, \alpha, \kappa) < 0$, then $\forall x\in M$, $x_2^x(t)\underset{t\to\infty}{\longrightarrow} 0$, $\mathbb P-$almost surely and exponentially fast. More precisely, $$\limsup_{t\to\infty} \frac{1}{t}\log\big(x_2(t)\big)\leq\Lambda(\varepsilon, \alpha, \kappa).$$
\end{theorem}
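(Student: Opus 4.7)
The plan is to exploit the explicit form of the $x_2$ equation and reduce the problem to the one–dimensional logistic SDE whose ergodic behavior is already understood via the density $\gamma_{\varepsilon,\kappa}$.

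First I would integrate the deterministic equation for $x_2$ pathwise. Since $\mathrm dx_2 = x_2\bigl(-\alpha + \tfrac{x_1}{1+x_1}\bigr)\mathrm dt$ and $x_2(t)>0$ whenever $x_2(0)>0$, one gets
\begin{equation*}
\frac{1}{t}\log\!\Bigl(\frac{x_2(t)}{x_2(0)}\Bigr)= -\alpha + \frac{1}{t}\int_0^{t}\frac{x_1(s)}{1+x_1(s)}\,\mathrm ds.
\end{equation*}
Thus the statement reduces to proving
\begin{equation*}
\limsup_{t\to\infty}\frac{1}{t}\int_0^t\frac{x_1(s)}{1+x_1(s)}\mathrm ds\le \int_0^\infty\frac{u}{1+u}\gamma_{\varepsilon,\kappa}(u)\,\mathrm du,
\quad\mathbb P\text{-a.s.}
\end{equation*}
(Here I treat the case $x_2(0)>0$; the case $x_2(0)=0$ is trivial by invariance of $M_0^{(2)}$.)

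Next I would invoke a pathwise comparison argument. Let $(Y_t)_{t\ge 0}$ denote the one–dimensional logistic SDE
\begin{equation*}
\mathrm dY_t = Y_t\!\left(1-\tfrac{Y_t}{\kappa}\right)\mathrm dt + \varepsilon Y_t\,\mathrm dB_t, \qquad Y_0=x_1(0),
\end{equation*}
driven by the same Brownian motion $B$. The drift in the $x_1$-equation differs from the logistic drift by the non-positive term $-x_1 x_2/(1+x_1)$, and the diffusion coefficients coincide. Since both SDEs have locally Lipschitz coefficients in the prey variable, the standard one–dimensional comparison theorem for SDEs (which applies equally well in this degenerate two–dimensional setting because only one noise acts and the $x_1$ equation can be treated pathwise given $x_2$) gives $0\le x_1(t)\le Y_t$ almost surely for all $t\ge 0$. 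Monotonicity of $u\mapsto u/(1+u)$ then yields
\begin{equation*}
\frac{1}{t}\int_0^t\frac{x_1(s)}{1+x_1(s)}\,\mathrm ds\le \frac{1}{t}\int_0^t\frac{Y_s}{1+Y_s}\,\mathrm ds.
\end{equation*}

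The final step is the ergodicity of $(Y_t)$. Under $0<\varepsilon^2<2$, Proposition \ref{prop:1-dim-logistic-results}\textit{(ii)} provides that $\gamma_{\varepsilon,\kappa}$ is the unique invariant probability density of the logistic diffusion, with ergodic (Birkhoff–type) convergence of time averages of bounded continuous functions. Since $u\mapsto u/(1+u)$ is continuous and bounded on $\mathbb R_+$, I obtain almost surely
\begin{equation*}
\lim_{t\to\infty}\frac{1}{t}\int_0^t\frac{Y_s}{1+Y_s}\,\mathrm ds = \int_0^\infty\frac{u}{1+u}\gamma_{\varepsilon,\kappa}(u)\,\mathrm du.
\end{equation*}
Combining the three inequalities gives
\begin{equation*}
\limsup_{t\to\infty}\frac{1}{t}\log x_2(t)\le \int_0^\infty\frac{u}{1+u}\gamma_{\varepsilon,\kappa}(u)\,\mathrm du - \alpha = \Lambda(\varepsilon,\alpha,\kappa)<0,
\end{equation*}
which proves both the exponential decay and, a fortiori, the almost sure convergence of $x_2(t)$ to $0$.

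The main obstacle is to rigorously justify the pathwise comparison $x_1(t)\le Y_t$ in this degenerate two–dimensional framework, and to make sure the Birkhoff ergodic theorem for $(Y_t)$ is applicable to a non-$L^1$-but-bounded observable starting from any deterministic initial point $x_1(0)\ge 0$ (including $x_1(0)=0$, in which case $Y_t\equiv 0$ and the conclusion is immediate). Both ingredients should be straightforward consequences of Proposition \ref{prop:1-dim-logistic-results} and the SDE comparison results recalled in Section \ref{section:in-practice}.
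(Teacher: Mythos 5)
Your argument is correct and is essentially the paper's own proof: pathwise integration of the $x_2$ equation, the SDE comparison $x_1(t)\le z_t$ against the one-dimensional logistic process, monotonicity of $u\mapsto u/(1+u)$, and the almost sure weak convergence of the logistic occupation measures $\Pi_t^z\Rightarrow\gamma_{\varepsilon,\kappa}$ for every $z>0$ (Proposition \ref{prop:1-dim-logistic-results}\textit{(ii)} via Corollary \ref{cor:convergence-as-tv-hormander}\textit{(i)}), which suffices because $u/(1+u)$ is bounded continuous. The two worries you flag at the end are already resolved by the paper's framework: the comparison is the standard one-dimensional theorem applied pathwise to $x_1$ with $x_2$ treated as a parameter (cited as \cite{ojm/1200770674}, Theorem 1.1), and boundedness of the observable makes it automatically in $L^1(\gamma_{\varepsilon,\kappa})$, so no extra Birkhoff subtlety arises.
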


Figure \ref{fig:stoch-r-m-model-extinction-1} illustrates this typical extinction behavior.
\begin{figure}[H]
    \centering
    \includegraphics[width=0.5\textwidth]{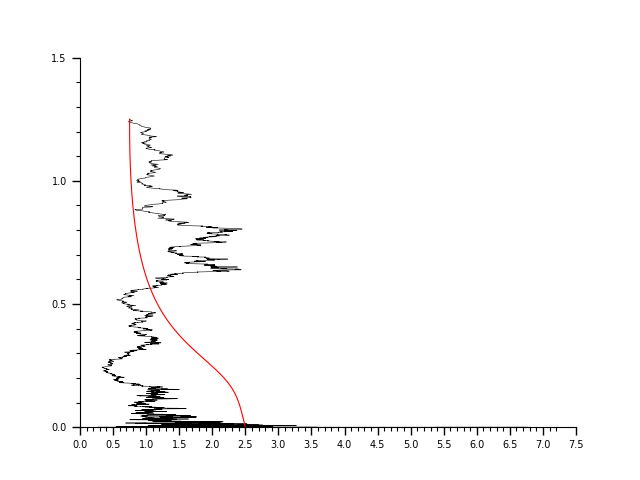}
    \caption{Simulation of (\ref{eq:rosenzweig-MacArthur-model-in-details}) starting at $(x_1(0),x_2(0))=(0.75,1.25)$ in extinction case with $\Lambda(\varepsilon=0.6,\ \alpha=0.9, \ \kappa=2.5) \approx -0.26 < 0$. The red trajectory is a trajectory of the deterministic system (i.e. for $\varepsilon=0$) while the black one describes the system (\ref{eq:rosenzweig-MacArthur-model-in-details}) with an Euler–Maruyama scheme.}
    \label{fig:stoch-r-m-model-extinction-1}
\end{figure}

Figure \ref{fig:stoch-r-m-model-extinction-1-global-zoom} illustrates the situation where $0<\varepsilon^2<2$ and $\Lambda(\varepsilon,\alpha, \kappa)<0$ while $\kappa$ is chosen large enough so that the deterministic trajectory converges to a limit cycle.
\begin{figure}[H]
    \centering
    \includegraphics[width=0.35\textwidth]{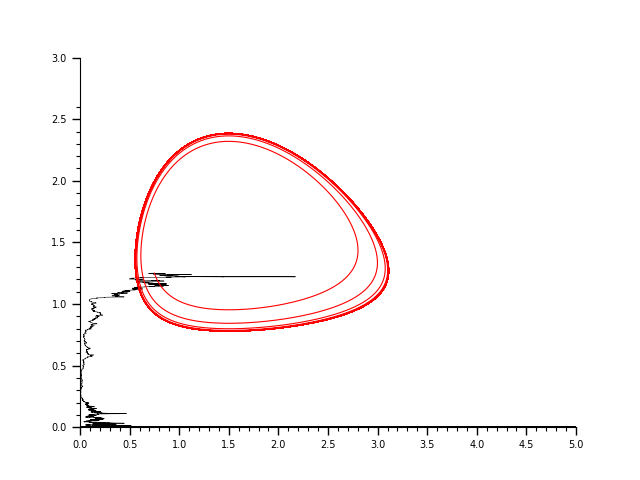}
    \includegraphics[width=0.35\textwidth]{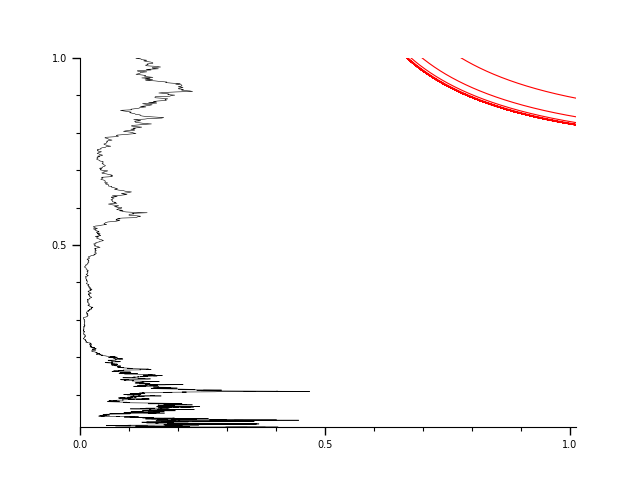}
    \caption{Left figure is a simulation of (\ref{eq:rosenzweig-MacArthur-model-in-details}) starting at $(x_1(0),x_2(0))=(0.75,1.25)$ in the case of the extinction of $x_2$ only with $\Lambda(\varepsilon=1.35,\ \alpha=0.6, \ \kappa=4.5) \approx -0.48 < 0$. The red trajectory is a trajectory of the deterministic system (i.e. for $\varepsilon=0$) while the black one describes the system (\ref{eq:rosenzweig-MacArthur-model-in-details}) with an Euler–Maruyama scheme. Right figure is a close-up of the situation near the extinction set $x_2=0$ while $x_1$ does not reach $0$.}
    \label{fig:stoch-r-m-model-extinction-1-global-zoom}
\end{figure}

Observe that the case $\alpha \geq 1$, ruled out in the introduction, naturally implies $\Lambda(\varepsilon, \alpha, \kappa) \leq 0$ and is thus covered by Theorem \ref{thm:rosenzweig-MacArthur-model-extinction-1} or by the critical case studied in \cite{doi:10.1137/20M131134X}.

\begin{proposition}\label{prop:convergence-when-x-2-extinction}
    Under the assumption from Theorem \ref{thm:rosenzweig-MacArthur-model-extinction-1}, it yields $$\Pi_t^x\Rightarrow \gamma_{\varepsilon, \kappa}(x_1)\mathrm dx_1 \otimes \delta_0(x_2)\mathrm dx_2, \quad \text{almost surely},$$ for all $x\in M_+^{(1)}=\{x\in M:x_1> 0\}$.
\end{proposition}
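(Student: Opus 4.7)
The plan is to prove the stated weak convergence by separately controlling the two marginals. For $f\in C_b(M)$, I would write
\[
\Pi_t^x f = \underbrace{\tfrac{1}{t}\!\int_0^t\!\bigl[f(x_1(s),x_2(s))-f(x_1(s),0)\bigr]ds}_{A_t} + \underbrace{\tfrac{1}{t}\!\int_0^t\! f(x_1(s),0)\,ds}_{B_t}.
\]
Theorem \ref{thm:rosenzweig-MacArthur-model-extinction-1} guarantees that $x_2(s)\to 0$ a.s.\ (even exponentially), so once the family of $x_1$-marginals $\widehat\Pi_t^x := \tfrac{1}{t}\!\int_0^t \delta_{x_1(s)}\,ds$ is tight, uniform continuity of $f$ on compacta and Cesàro averaging force $A_t\to 0$. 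The real task is therefore to show that $\widehat\Pi_t^x \Rightarrow \gamma_{\varepsilon,\kappa}(x_1)\,dx_1$ almost surely.

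For this I would sandwich $x_1(\cdot)$ between two one-dimensional logistic SDEs driven by the same Brownian motion $(B_t)$ and invoke the Ikeda--Watanabe comparison theorem. Since $x_2/(1+x_1)\ge 0$, the drift of $x_1$ is dominated by $y(1-y/\kappa)$, giving $x_1(t)\le Y^+_t$ where $Y^+$ solves the logistic SDE with parameters $(1,\kappa,\varepsilon)$ starting from $x_1$. For a lower comparison, fix $\delta\in(0,1-\varepsilon^2/2)$ and set $\tau_\delta := \inf\{t\ge 0 : x_2(s)\le\delta\ \forall\,s\ge t\}$, which is a.s.\ finite. For $s\ge\tau_\delta$, $x_2(s)/(1+x_1(s))\le\delta$, so the drift of $x_1$ dominates $y\bigl((1-\delta)-y/\kappa\bigr)$, whence $x_1(t)\ge Y^{-,\delta}_t$ for $t\ge\tau_\delta$, where $Y^{-,\delta}$ is the logistic SDE with parameters $(1-\delta,\kappa,\varepsilon)$ restarted at $x_1(\tau_\delta)>0$ at time $\tau_\delta$ via the strong Markov property. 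By Proposition \ref{prop:1-dim-logistic-results}, the empirical measures of $Y^+$ and $Y^{-,\delta}$ converge a.s.\ to Gamma densities $\gamma_{\varepsilon,\kappa}$ and $\gamma^\delta$ respectively, where $\gamma^\delta$ has shape $2(1-\delta)/\varepsilon^2-1$ and the same scale $\varepsilon^2\kappa/2$; in particular $\gamma^\delta\Rightarrow\gamma_{\varepsilon,\kappa}$ as $\delta\to 0^+$.

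Testing the sandwich against the monotone indicators $g(y)=\mathbf 1_{\{y>a\}}$ gives, for every $a>0$,
\[
\gamma^\delta\bigl((a,\infty)\bigr) \le \liminf_{t\to\infty} \widehat\Pi_t^x\bigl((a,\infty)\bigr) \le \limsup_{t\to\infty} \widehat\Pi_t^x\bigl((a,\infty)\bigr) \le \gamma_{\varepsilon,\kappa}\bigl((a,\infty)\bigr).
\]
Letting $\delta\to 0^+$ pinches the limit, so $\widehat\Pi_t^x \Rightarrow \gamma_{\varepsilon,\kappa}$ a.s.; the upper comparison additionally yields the tightness used to control $A_t$. Combining $A_t\to 0$ with $B_t\to\int f(y,0)\gamma_{\varepsilon,\kappa}(dy)$ produces the claimed limit for every $f\in C_b(M)$, hence weak convergence of $\Pi_t^x$ on $M$ to $\gamma_{\varepsilon,\kappa}(x_1)\,dx_1 \otimes \delta_0(x_2)\,dx_2$.

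The main difficulty I expect is the lower comparison after the random time $\tau_\delta$: one must restart the driving Brownian motion and the logistic SDE at the random position $x_1(\tau_\delta)$, and verify that the ergodic statement of Proposition \ref{prop:1-dim-logistic-results} transfers to this random initial condition. The strong Markov property supplies this, and the continuous dependence of the Gamma parameters on $\delta$ makes the limit $\delta\to 0$ elementary; the only care needed is to arrange the two-sided inequality so that it holds $\mathbb P$-a.s.\ simultaneously for $\delta$ and $a$ ranging over countable dense sets before passing to the limit.
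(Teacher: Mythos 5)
Your route is genuinely different from the paper's: the authors re-run the $H$-persistence machinery with $I=\{1\}$ (using $V(x_1,x_2)=-\log x_1$ near $x_1=0$, verifying the strong law via Corollary~\ref{cor:hyp-3-implies}, checking $\mu H = -1+\varepsilon^2/2<0$ on $M_0^{(1)}$, and concluding by Corollary~\ref{cor:persistent-measure}), whereas you use a pathwise sandwich between two one-dimensional logistic SDEs and Portmanteau. Your decomposition $\Pi_t^x f = A_t + B_t$ and the upper comparison $x_1(t)\le Y^+_t$ are sound, and the Gamma parameters of $\gamma^\delta$ (shape $2(1-\delta)/\varepsilon^2-1$, scale $\varepsilon^2\kappa/2$) are correct.

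However, there is a genuine gap in the lower comparison. The time $\tau_\delta := \inf\{t\ge 0 : x_2(s)\le\delta\ \forall s\ge t\}$ is \emph{not} a stopping time: deciding whether $\tau_\delta\le t$ requires knowledge of the entire future of the path. Consequently the strong Markov property cannot be invoked at $\tau_\delta$, and $(B_{\tau_\delta+t}-B_{\tau_\delta})_{t\ge 0}$ is not a Brownian motion independent of $\mathcal F_{\tau_\delta}$, so you cannot directly transfer the ergodic statement of Proposition~\ref{prop:1-dim-logistic-results} to the restarted process $Y^{-,\delta}$. This can be repaired, but not by the strong Markov property: use the explicit solution formula~(\ref{eq:solution-one-dimensional-logistic-sde}) to check that the logistic flow synchronizes, i.e. $|z_t^z - z_t^{z'}|\to 0$ a.s. for any two initial conditions (since $1/z_t^z$ differs from $1/z_t^{z'}$ by $e^{-N_t}(1/z-1/z')\to 0$), and similarly that a finite random time-shift of the starting time leaves the empirical limit unchanged; then $Y^{-,\delta}$ shares the same occupation-measure limit $\gamma^\delta$ as the logistic process started at a fixed point at time $0$, and the rest of your argument (restricting $\delta$ and $a$ to countable dense sets, then letting $\delta\to 0^+$) goes through. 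As written, though, the proof as you present it contains a false step at the restart.
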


Despite this result, there is no evidence that the law of $x_1(t)$ converges to $\gamma_{\varepsilon,\kappa}$.
\begin{conjecture}\label{conj:rosenzweig-MacArthur-model-extinction-1-convergence-x1}
 \textit{In the setting of Theorem \ref{thm:rosenzweig-MacArthur-model-extinction-1}, $\forall x\in  M_+$, the law of $x_1^x(t)$ converges weakly to $\gamma_{\varepsilon,\kappa}(\mathrm dx)$.}
\end{conjecture}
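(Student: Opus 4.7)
The plan is to couple $x_1$ pathwise with the one-dimensional logistic SDE $\mathrm dy_t = y_t\bigl((1-y_t/\kappa)\,\mathrm dt + \varepsilon\,\mathrm dB_t\bigr)$ driven by the same Brownian motion, started at $y_0 = x_1(0)$. Since the extra drift $-x_1 x_2/(1+x_1)$ in (\ref{eq:rosenzweig-MacArthur-model-in-details}) is nonpositive, the SDE comparison theorem gives $0 < x_1(t) \le y_t$ almost surely. By Proposition \ref{prop:1-dim-logistic-results}, $\mathcal{L}(y_t) \Rightarrow \gamma_{\varepsilon,\kappa}$, so by a Slutsky-type argument it suffices to show that $y_t - x_1(t) \to 0$ in probability.

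To quantify the gap, I would work with the log-ratio $L(t):=\log(y_t/x_1(t))\ge 0$. Applying It\^{o}'s formula, the multiplicative diffusion terms $\varepsilon\,\mathrm dB_t$ cancel and one obtains the pathwise bounded-variation equation
$$L'(t) \;=\; \frac{x_2(t)}{1+x_1(t)} \;-\; \frac{x_1(t)\bigl(e^{L(t)}-1\bigr)}{\kappa}.$$
By Theorem \ref{thm:rosenzweig-MacArthur-model-extinction-1}, $x_2(t)$ decays exponentially fast almost surely, so the forcing term is integrable along trajectories. If $x_1$ were uniformly bounded below by some $c>0$, a Gronwall-type argument would yield $L(t)\to 0$ a.s., hence $x_1(t)-y_t \to 0$ a.s., and weak convergence of $\mathcal{L}(x_1(t))$ to $\gamma_{\varepsilon,\kappa}$ would follow immediately.

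The main obstacle is precisely that $x_1$ is not bounded below: the logistic-type diffusion visits arbitrarily small values infinitely often, and on such excursions the stabilising coefficient $x_1(t)/\kappa$ degenerates so that $L$ is essentially frozen rather than contracting. The plan for closing this gap would combine (i) quantitative bounds on the occupation time of $\{x_1<\eta\}$, using the stochastic domination $x_1\le y$ and the fact that $\gamma_{\varepsilon,\kappa}([0,\eta])\to 0$ as $\eta\to 0$, (ii) the sharp exponential decay $x_2(s)\le e^{(\Lambda+\delta)s}$ eventually, from Theorem \ref{thm:rosenzweig-MacArthur-model-extinction-1}, and (iii) a careful pathwise balance of the small growth of $L$ on ``bad'' intervals $\{x_1<\eta\}$ against its contraction on ``good'' ones. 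A less quantitative alternative would be to first show tightness of $(\mathcal{L}(x_1(t)))_{t\ge 0}$ (by stochastic domination by $y_t$) and then identify any weak limit point $\mu$ with $\gamma_{\varepsilon,\kappa}$; however, a Feller-property argument only identifies a subsequential limit of $\mathcal{L}(x_1(t_n+s))$ with $\mu P^{\log}_s$, and equating this latter measure with $\mu$ itself is essentially what must be shown. This tension is, I believe, why the authors state the result as a conjecture.
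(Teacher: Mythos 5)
This statement is explicitly presented in the paper as a \emph{conjecture} with no proof offered: the text immediately preceding it says there is no evidence that the law of $x_1(t)$ converges to $\gamma_{\varepsilon,\kappa}$. You correctly recognize this — you do not claim to have a complete proof, and the place where your argument stops is precisely the genuine obstruction.

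Your computations are sound. Coupling $x_1$ with the logistic solution $y$ driven by the same Brownian motion and passing to the log-ratio $L=\log(y/x_1)$ does cancel the noise, and the pathwise ODE $L'(t) = x_2(t)/(1+x_1(t)) - x_1(t)(e^{L(t)}-1)/\kappa$ is correct (it follows from $\mathrm d\log x_1$ and $\mathrm d\log y$ plus $y - x_1 = x_1(e^L-1)$). The conditional Gronwall argument when $x_1$ is bounded below is also correct, since then $L'\le x_2(t) - (c/\kappa)L$ and $x_2$ decays exponentially by Theorem \ref{thm:rosenzweig-MacArthur-model-extinction-1}. The obstacle you identify — that the contraction coefficient $x_1(t)/\kappa$ degenerates on excursions to small prey density, and that such excursions occur infinitely often — is exactly the reason the asymptotic coupling cannot be closed with elementary estimates; controlling the occupation time of $\{x_1<\eta\}$ \emph{jointly} with the exponential decay of $x_2$ requires quantitative bounds that the paper's framework (based on invariant measures and strong laws, not pathwise mixing estimates) does not provide.

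One small caveat on your alternative route: since $x_1$ alone is not a Markov process (its drift depends on $x_2$), there is no semigroup $P^{\log}_s$ for $x_1$ by itself; the Feller identification step you describe would have to be formulated for the full pair $(x_1,x_2)$ and then projected, and the projected limit dynamics is itself the thing that must be established. So the alternative is even weaker than you suggest, but your overall conclusion — that this is an honest open problem — agrees with the paper.
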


\begin{figure}[H]
 \centering
 \includegraphics[width=0.5\textwidth]{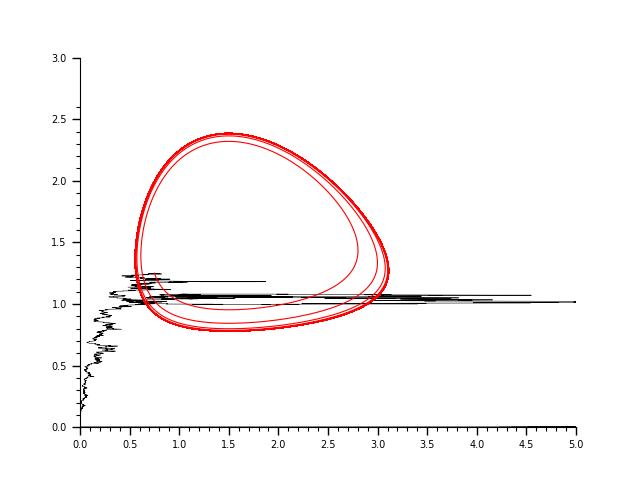}
 \caption{Simulation of (\ref{eq:rosenzweig-MacArthur-model-in-details}) starting at $(x_1(0),x_2(0))=(0.75,1.25)$ in global extinction case with $\Lambda(\varepsilon=1.5,\ \alpha=0.6, \ \kappa=4.5) \approx -0.79 < 0$. The red trajectory is a trajectory of the deterministic system (i.e. for $\varepsilon=0$) while the black one describes the system (\ref{eq:rosenzweig-MacArthur-model-in-details}) with an Euler–Maruyama scheme.}
 \label{fig:stoch-r-m-model-extinction-2}
\end{figure}

Finally, we focus on the case where the condition $\varepsilon^2<2$ is not respected: in this situation where the environmental fluctuation is too large, it leads to the extinction of both species as depicted in Figure \ref{fig:stoch-r-m-model-extinction-2}.

\begin{theorem}[\textbf{Extinction of both species}]\label{thm:rosenzweig-MacArthur-model-extinction-2}
 If $\varepsilon^2>2$, then $\forall x\in M$, $X_t^x\underset{t\to\infty}{\longrightarrow} (0,0)$, $\mathbb P-$almost surely and exponentially fast. More precisely, $$\limsup_{t\to\infty} \frac{1}{t}\log\big(x_1(t)\big)\leq1-\frac{\varepsilon^2}{2}, \ \mathrm{ and } \ \limsup_{t\to\infty} \frac{1}{t}\log\big(x_2(t)\big)\leq-\alpha.$$
\end{theorem}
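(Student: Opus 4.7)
The plan is to apply Itô's formula to $\log x_1(t)$ and the chain rule to $\log x_2(t)$, then exploit the sign $1 - \varepsilon^2/2 < 0$. Since the extinction sets $\{x_1 = 0\}$ and $\{x_2 = 0\}$ are invariant under $(P_t)_{t\geq 0}$ (as noted in Section \ref{section:results}), we may assume without loss of generality that $x_1(0) > 0$ and $x_2(0) > 0$, since otherwise the corresponding coordinate is identically $0$ and the claimed bound is trivial.

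Applying Itô's formula to $\log x_1(t)$ yields
\begin{equation*}
\log x_1(t) = \log x_1(0) + \int_0^t \left(1 - \frac{x_1(s)}{\kappa} - \frac{x_2(s)}{1+x_1(s)} - \frac{\varepsilon^2}{2}\right) ds + \varepsilon B_t.
\end{equation*}
Because $x_1, x_2 \geq 0$ pathwise, the integrand is bounded above by $1 - \varepsilon^2/2$, so dividing by $t$ gives
\begin{equation*}
\frac{\log x_1(t)}{t} \leq \frac{\log x_1(0)}{t} + 1 - \frac{\varepsilon^2}{2} + \frac{\varepsilon B_t}{t}.
\end{equation*}
The strong law of large numbers for Brownian motion ($B_t/t \to 0$ almost surely) then gives $\limsup_{t\to\infty} t^{-1}\log x_1(t) \leq 1 - \varepsilon^2/2$, which is strictly negative under the assumption $\varepsilon^2 > 2$; hence $x_1(t) \to 0$ almost surely and exponentially fast.

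For the predator equation, which is a deterministic ODE driven by the random path $s \mapsto x_1(s)$, the chain rule applied to $\log x_2(t)$ gives
\begin{equation*}
\frac{\log x_2(t)}{t} = \frac{\log x_2(0)}{t} - \alpha + \frac{1}{t}\int_0^t \frac{x_1(s)}{1+x_1(s)} ds.
\end{equation*}
Pathwise, $x_1(s) \to 0$ almost surely implies $x_1(s)/(1+x_1(s)) \to 0$, and Cesàro's lemma yields that the integral average tends to $0$. This gives $\limsup_{t\to\infty} t^{-1}\log x_2(t) \leq -\alpha$, completing the proof.

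The argument is essentially frictionless: there is no real obstacle beyond ensuring Itô's formula is applicable, which is justified since on $\{x_1(0) > 0\}$ the process $x_1$ stays strictly positive up to any finite time (boundary invariance combined with pathwise uniqueness of the SDE, as established in Proposition \ref{prop:hyp-LU-implies}). The key ingredient is that the Itô correction $-\varepsilon^2/2$ dominates the maximal deterministic growth rate $1$ of the logistic drift, producing a negative Lyapunov exponent without needing to control the nonlinearities precisely.
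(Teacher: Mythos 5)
Your proof is correct and does the job, but it takes a slightly different (and arguably more direct) route than the paper. The paper establishes the comparison $x_1(t)\leq z_t$ via a comparison theorem for SDEs (\cite{ojm/1200770674}), where $(z_t)$ solves the one-dimensional logistic SDE (\ref{eq:1-dim-logistic-sde}), then invokes Proposition \ref{prop:1-dim-logistic-results}\textit{\textbf{(i)}} (itself proved via the explicit formula (\ref{eq:solution-one-dimensional-logistic-sde})) to conclude both $x_1(t)\to 0$ and $\limsup_t \frac{1}{t}\log x_1(t)\leq 1-\varepsilon^2/2$, and finally uses the closed form (\ref{eq:x-2-solution}) for $x_2$. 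You instead apply It\^o's formula directly to $\log x_1$ for the full two-dimensional system and observe that the drift $1 - x_1/\kappa - x_2/(1+x_1) - \varepsilon^2/2$ is pathwise bounded above by $1-\varepsilon^2/2$, which immediately gives the same $\limsup$ bound via the strong law of large numbers for Brownian motion; this bypasses the comparison theorem and the explicit logistic solution entirely. For $x_2$, both proofs are identical in substance: the same ODE representation $\log x_2(t) = \log x_2(0) -\alpha t + \int_0^t \frac{x_1(s)}{1+x_1(s)}\,ds$ and the Ces\`aro argument using $x_1(s)\to 0$. Your direct It\^o computation is self-contained and slightly more elementary for this particular theorem; the paper's comparison-based framing is natural in context because the comparison $x_1\leq z$ is reused more substantially in the proofs of Theorem \ref{thm:rosenzweig-MacArthur-model-extinction-1} and Proposition \ref{prop:convergence-when-x-2-extinction}.
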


The proofs of both Theorems \ref{thm:rosenzweig-MacArthur-model-extinction-1} and \ref{thm:rosenzweig-MacArthur-model-extinction-2} as well as Proposition \ref{prop:convergence-when-x-2-extinction} are detailed in Section \ref{section:extinction-proof}.\\

As a summary of this situation, Figure \ref{fig:Lambda-estimation} depicts the different situation we face by evaluating $\Lambda(\varepsilon, \alpha, \kappa)$ with $\alpha=0.5$ fixed while varying $\varepsilon$ and $\kappa$.
\begin{figure}[H]
 \centering
 \includegraphics[width=0.7\textwidth]{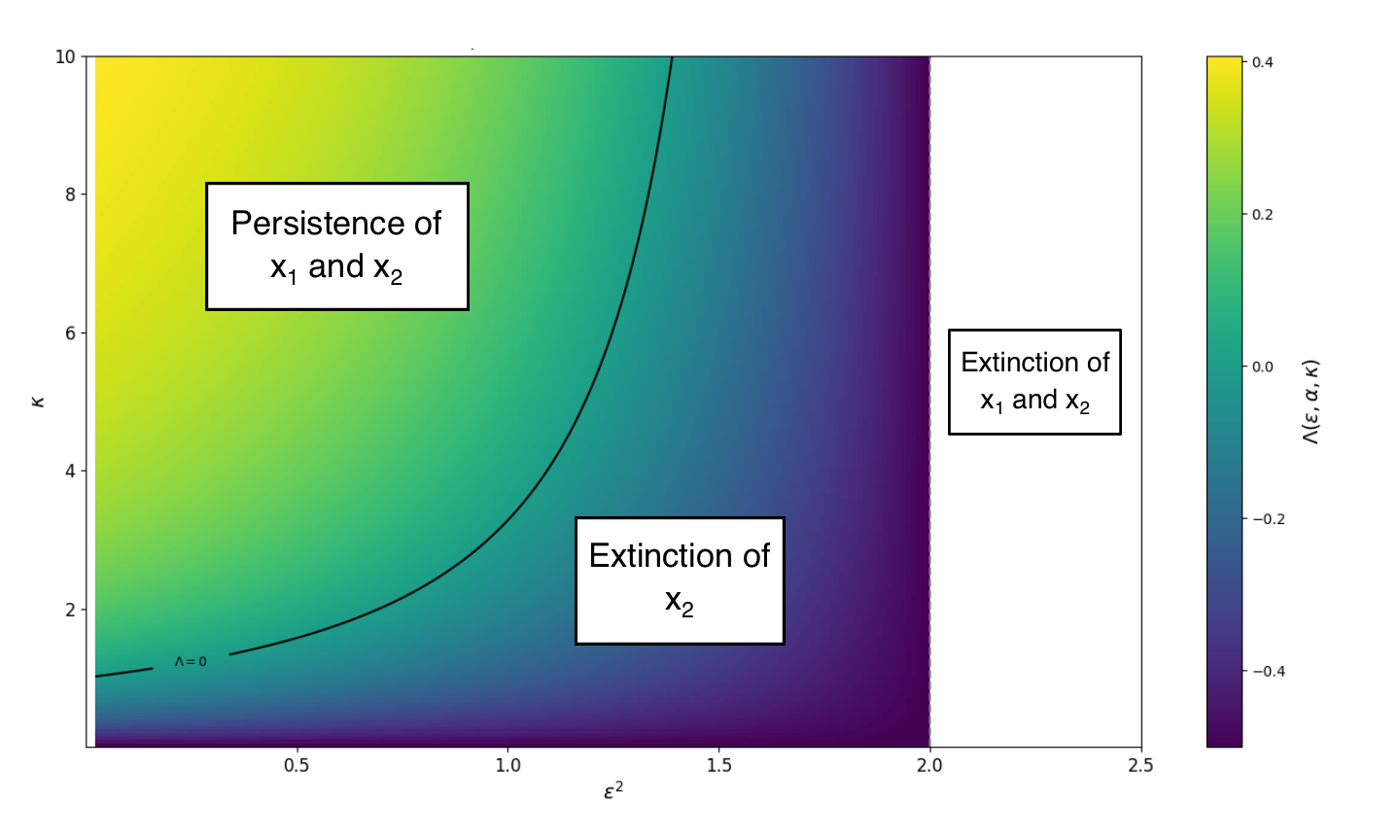}
 \caption{Evaluation of $\Lambda(\varepsilon,\ \alpha, \ \kappa)$ for $\alpha=0.5$ fixed. The different zones of the graph detail when we are in a persistence situation (above the $\Lambda=0$ curve), general extinction (when $\varepsilon^2>2$) and extinction of only $x_2$ (below the $\Lambda=0$ and when $\varepsilon^2<2$).}
 \label{fig:Lambda-estimation}
\end{figure}

\begin{remark}
    Since $\gamma_{\varepsilon,\kappa}$ is a $\Gamma-$ distribution whose expectation is $k\theta = \kappa(1 - \frac{\varepsilon^2}{2})$ and variance $k\theta^2 = \frac{\kappa^2 \varepsilon^2}{2}(1 - \frac{\varepsilon^2}{2})$, if $X\sim \gamma_{\varepsilon,\kappa}$, then $\lim_{\varepsilon\to 0}\mathbb E(X)=\kappa$ and $\lim_{\varepsilon\to 0}\mathrm{Var}(X)=0.$ By Bienaymé–Tchebyschev, for any $\delta>0$, $$\mathbb P(|X-\mathbb E(X)|>\delta)\leq \frac{\mathrm{Var}(X)}{\delta ^2}\underset{\varepsilon\to0}{\longrightarrow}0.$$ Thus, the law of $X$ converges to $\delta_{\kappa}$ and in our context, $$\Lambda(\varepsilon,\alpha,\kappa)\underset{\varepsilon\to0}{\longrightarrow} \frac{\kappa}{1+\kappa}-\alpha = \Lambda(0,\alpha, \kappa),$$
    which is deterministic persistence threshold. Similarly, since $\lim_{\varepsilon\to\sqrt{2}} \mathbb E(X)=\lim_{\varepsilon\to\sqrt{2}} \mathrm{Var}(X)=0,$ it yields $$\Lambda(\varepsilon,\alpha,\kappa)\underset{\varepsilon\to\sqrt{2}}{\longrightarrow}-\alpha=\Lambda(\sqrt{2},\alpha, \kappa).$$ 
\end{remark}

In the same spirit, Figure \ref{fig:Lambda-estimation-various} depicts the different situation with $\varepsilon=0.6$ fixed while varying $\alpha$ and $\kappa$.
\begin{figure}[H]
 \centering
 \includegraphics[width=0.7\textwidth]{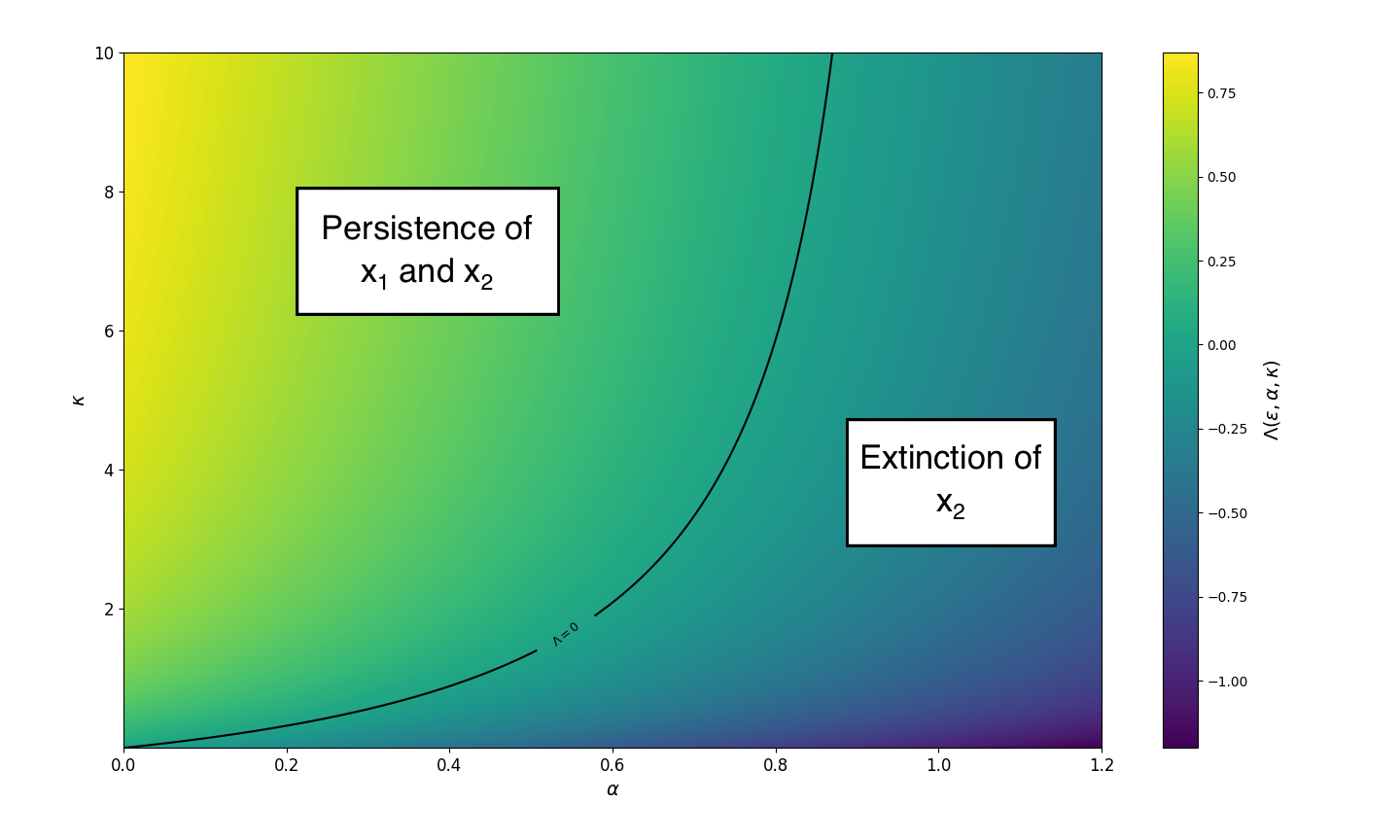}
 \caption{Evaluation of $\Lambda(\varepsilon,\ \alpha, \ \kappa)$ for $\varepsilon=0.6$ fixed. The different zones of the graph detail when we are in a persistence situation (above the $\Lambda=0$ curve) and extinction of only $x_2$ (below the $\Lambda=0$).}
 \label{fig:Lambda-estimation-various}
\end{figure}

\section{Notations and preliminaries}\label{section:preliminaries}

We focus on \emph{Kolmogorov stochastic differential equations}, which describes a system of SDEs on the state space $M:= \mathbb R_+^n$ of the form
\begin{equation}\label{eq:kolmogorov}
 \mathrm dx_i = x_i\Bigl[F_i(x)\,\mathrm dt + \sum_{j=1}^{m}\Sigma^{j}_{i}(x)\,d B^{j}_{t}\Bigr], \quad i=1,\dots,n,
\end{equation}

where $F_i$, $\Sigma_i^j$ are real valued locally Lipschitz maps on $M$, $F_i$ represents the per-capita growth rates of the species in absence of noise, and $(B_t^1, \cdots, B_t^m)_{t\geq 0}$ is an $m-$dimensional standard Brownian motion that models the environmental noise affecting the growth rates. 

For simplicity, we assume that $\Sigma_i^j$ is bounded: this assumption can be relaxed under other conditions such as described in \cite{Hening2018_Coexistence}. Note that the similar models investigated by these authors are limited to non-degenerate situations. 

We let $a(x)$ denote the positive semi‑definite matrix defined by
\begin{equation}
 a_{ij}(x)=\sum_{k=1}^{m}\Sigma^{k}_{i}(x)\,\Sigma^{k}_{j}(x).
\end{equation}

For all twice continuously differentiable functions $f\colon M\to\mathbb{R}$, let
\begin{equation}\label{eq:generator-kolmogorov-general}
 Lf(x)=\sum_{i=1}^{n}x_iF_i(x)\frac{\partial f}{\partial x_i}(x)+\frac12\sum_{i,j=1}^{n}x_i x_j a_{ij}(x)\frac{\partial^{2}f}{\partial x_i\partial x_j}(x),
\end{equation}
and
\begin{equation}\label{eq:extended-carre-du-champ-kolmogorov-general}
 \Gamma (f)(x)=\sum_{i,j=1}^{n}x_i x_j a_{ij}(x)\frac{\partial f}{\partial x_i}(x)\frac{\partial f}{\partial x_j}(x).
\end{equation}

Recall that continuous function $W\colon M\to\mathbb{R}$ is \emph{proper} if the sublevel sets $\{x\in M:W(x)\leq R\}$ are compact for all $R>0$, or equivalently $\lim_{||x||\to\infty} W(x)= \infty$.

The following hypothesis is our \emph{standing assumption} that will be assumed to be verified along the text.
\begin{hypothesis}[\textbf{Standing assumption}]\label{hyp:main-hyp}
 There exist a $C^{2}$ proper map $U\colon M\to[1,\infty)$ and constants $a>0$, $b\geq 0$ such that
\begin{equation}\label{eq:hyp-LU}
 LU\leq -a U+b.
\end{equation}
\end{hypothesis}

\begin{proposition}\label{prop:hyp-LU-implies}
 Under Hypothesis \ref{hyp:main-hyp}, for each $x\in M$, there exists a unique (strong) solution $(X^{x}_{t})_{t\geq 0}\subset M$ to (\ref{eq:kolmogorov}) with $X^{x}_{0}=x$, and $X^{x}_{t}$ is continuous in $(t,x)$.
\end{proposition}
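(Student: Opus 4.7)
The plan is to proceed in three stages: (i) build a local strong solution that stays in $M$ until an explosion time $\tau^x_\infty:=\lim_n \tau^x_n$, where $\tau^x_n:=\inf\{t\geq 0 : U(X^x_t)\geq n\}$; (ii) use the Lyapunov inequality $LU\leq -aU+b$ to show $\tau^x_\infty=\infty$ almost surely; (iii) upgrade to joint continuity in $(t,x)$ from standard SDE continuity-in-initial-condition estimates on each stopping interval.

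For step (i), the coefficients $b_i(x):=x_iF_i(x)$ and $\sigma_i^j(x):=x_i\Sigma_i^j(x)$ are locally Lipschitz on $M$, so by the classical Itô theory (see e.g. Ikeda–Watanabe), for every $x\in M$ there exists a pathwise unique strong solution up to an explosion time. To see that the solution stays in $M=\mathbb R_+^n$, note that the Kolmogorov form allows writing each component as a Doléans–Dade exponential:
\begin{equation*}
x_i(t)=x_i(0)\exp\!\left(\int_0^t\!\Bigl(F_i(X_s)-\tfrac12\,a_{ii}(X_s)\Bigr)\mathrm ds+\int_0^t\sum_j\Sigma_i^j(X_s)\,\mathrm dB_s^j\right),
\end{equation*}
which is nonnegative (and in fact vanishes iff $x_i(0)=0$), so the boundary faces $\{x_i=0\}$ are invariant. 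Thus the localized process stays in $M$ until $\tau^x_\infty$.

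For step (ii), which I expect to be the main technical step, I would apply Itô's formula to $e^{at}U(X^x_{t\wedge\tau^x_n})$. Using $LU\leq -aU+b$, the drift term becomes
\begin{equation*}
e^{as}\bigl(aU(X_s)+LU(X_s)\bigr)\leq b\,e^{as},
\end{equation*}
so that the stochastic integral is a true martingale up to $\tau^x_n$ (since $U$ and $\nabla U$ are bounded on $\{U\leq n\}$). Taking expectations yields
\begin{equation*}
\mathbb E\bigl[e^{a(t\wedge\tau^x_n)}U(X^x_{t\wedge\tau^x_n})\bigr]\leq U(x)+\tfrac{b}{a}(e^{at}-1).
\end{equation*}
Since $U\geq 1$ and $U(X^x_{\tau^x_n})\geq n$ on $\{\tau^x_n\leq t\}$, Markov's inequality gives $\mathbb P(\tau^x_n\leq t)\leq \bigl(U(x)+\tfrac{b}{a}(e^{at}-1)\bigr)/n$, which tends to $0$ as $n\to\infty$. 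Hence $\tau^x_\infty=\infty$ a.s., and the solution is global.

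For step (iii), on each set $\{\tau^x_n\leq T\}^c$ the coefficients are Lipschitz on the compact sublevel set $\{U\leq n\}$, and the standard Kolmogorov-type estimate $\mathbb E\bigl[\sup_{s\leq T}|X^x_s-X^y_s|^p\bigr]\leq C_{n,T,p}|x-y|^p$ for trajectories stopped at $\tau^x_n\wedge\tau^y_n$ furnishes a continuous modification in $(t,x)$ on each such localization; since $\tau^x_n\to\infty$ a.s., this yields joint continuity on $\mathbb R_+\times M$. Uniqueness is already pathwise at each localization and therefore extends globally.
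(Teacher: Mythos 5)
Your proposal is correct and follows essentially the same route as the paper: local existence from locally Lipschitz coefficients, invariance of $M$ via the Doléans--Dade (exponential) representation of each component, non-explosion via the stopping times $\tau_n^x$ combined with the Lyapunov inequality $LU\leq -aU+b$, and continuity from standard SDE estimates. The only cosmetic difference is that you apply Itô to $e^{at}U(X_{t\wedge\tau_n^x})$ to get the sharper bound $U(x)+\tfrac{b}{a}(e^{at}-1)$, whereas the paper works directly with $U$ and the cruder bound $U(x)+bt$; both yield $\mathbb P(\tau_n^x\leq t)\to 0$ as $n\to\infty$.
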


We postpone the proof inspired by Proposition 3.2 in \cite{Benaim2018_Persistence} to Appendix \ref{appendix-proof-hyp-LU-implies}. For each $f\in \mathcal B_b( M)$ bounded measurable functions on $ M$, we set $$P_tf(x)=\mathbb E(f(X_t^x)), \quad \forall t\geq 0, \ x\in M.$$ 

The family $(P_t)_{t\geq 0}$ is a Markov semigroup satisfying the \emph{Markov property}, which is
\begin{equation*}
 \mathbb{E}\bigl[f(X^{x}_{t+s})\,\vert\,\mathcal{F}_{t}\bigr]=(P_{s}f)(X^{x}_{t}), \quad \forall x \in M, \ \mathbb{P}\text{-a.s.}
\end{equation*}

According to Proposition \ref{prop:hyp-LU-implies}, $(P_t)_{t\geq 0}$ is a $C_b( M)-$Feller semigroup in the sense that $$P_t(C_b( M))\subset C_b( M), \quad \forall t\geq 0,$$ and 
\begin{equation*}
 \lim_{t\downarrow 0}P_tf(x)=f(x), \quad \forall f\in C_b( M), \ x\in M.
\end{equation*}

As before, for $I\subset\{1,\cdots,n\}$, let
\begin{equation}\label{eq:definition-M-0-I}
    M_0^I:=\left\{x\in M: \prod_{i\in I} x_i=0, \right\},
\end{equation} 
describes our \emph{extinction sets} while 
\begin{equation}\label{eq:definition-M-+-I}
M_+^I:=\{x\in M: x_i>0, \forall i\in I\},
\end{equation}
is our \emph{non-extinction set}. In particular, if $\mathcal M$ denotes any of those subsets, given the form of (\ref{eq:kolmogorov}), they are invariant under $(P_t)_{t\geq 0}$ (see Remark \ref{rem:invariant-extinction-set}). For simplicity, let $M_0^{(1,\cdots, n)}=M_0$ and $M_+^{(1,\cdots, n)}=M_+$.

Recall that a probability measure $\mu\in\mathcal{P}( M)$ is called \emph{invariant} if
$$\mu P_{t}=\mu,\quad t\geq 0,$$
that is, $\mu(P_{t}f)=\mu f$ for all $f\in\mathcal{B}( M)$ (or $C_{b}( M)$) and all $t\geq 0$. Denote the set of invariant probability measures of $(P_{t})_{t\geq 0}$ by $\mathcal{P}_{\mathrm{inv}}( M)$. We also define
\begin{align*}
 \mathcal{P}_{\mathrm{inv}}(\mathcal M)&=\bigl\{\mu\in\mathcal{P}_{\mathrm{inv}}( M):\mu(\mathcal M)=1\bigr\}.
\end{align*}

Recall that the empirical occupation measures $(\Pi^{x}_{t})_{t\geq 0}$ of the process $(X^{x}_{t})_{t\geq 0}$ is defined by (\ref{eq:empirical-measures-definition-first}).

One key consequence of Hypothesis \ref{hyp:main-hyp} is the tightness of $(\Pi^{x}_{t})_{t\geq 0}$, as described in the following more general result.   Note that these conclusions do not depend on the specific form of the SDE (\ref{eq:kolmogorov}) and remain true for any diffusion on $\mathbb R_+^n$ with locally Lipschitz coefficients, provided that Hypothesis \ref{hyp:main-hyp} is verified.

\begin{proposition}\label{prop:conditions-hypotheses-hold}
 Under Hypothesis \ref{hyp:main-hyp}, the following properties hold:
 \begin{enumerate}[label=(\textbf{\roman*})]
  \item Let $a,b$ be the constants from (\ref{eq:hyp-LU}), then
  \begin{equation}\label{eq:hyp-3-b-implies-bounded-PW}
   P_{t} U\leq e^{-a t}\left(U-\frac{b}{a}\right)+\frac{b}{a}\leq e^{-a t} U+\frac{b}{a}, \quad \forall t\geq 0.
  \end{equation}
  \item For all $\Pi\in\mathcal P_{\mathrm{inv}}(M)$, $\Pi (U)\leq \frac{b}{a}$.
  \item For all $x\in M$,
  \begin{equation}\label{eq:pi-t-x-U-bounded}
      \limsup_{t\to\infty} \Pi_t^x\left(\sqrt U\right)\leq\left(\frac{2-\sqrt{e^{-a}}}{1-\sqrt{e^{-a}}}\right) \frac{\sqrt b}{\sqrt a}, \quad \text{almost surely}.
  \end{equation} In particular, $(\Pi^{x}_{t})_{t\geq 0}$ is almost surely tight, and every limit point lies in $\mathcal{P}_{\mathrm{inv}}(M)$.
 \end{enumerate}
\end{proposition}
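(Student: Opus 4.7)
The plan is to prove the three claims in order. The basic estimate (i) comes from Itô's formula; (ii) follows from (i) by truncation, invariance and dominated/monotone convergence; (iii) requires installing a discrete-time Foster--Lyapunov condition for $\sqrt U$ on the one-step chain and running a martingale strong law argument, with a second pass to lift the bound from integer times to the continuous-time integral.

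For (i), I would apply Itô's formula to $t\mapsto e^{at}U(X_t^x)$. Using $LU\leq -aU+b$, the absolutely continuous part is bounded above by $b\,e^{at}\,dt$, while the stochastic part is a local martingale. Since $U$ is only $C^{2}$ and proper, I would localise with the stopping times $\tau_n:=\inf\{t\geq 0:U(X_t^x)\geq n\}$, which tend to $\infty$ almost surely by non-explosion (Proposition \ref{prop:hyp-LU-implies}). Stopping at $t\wedge\tau_n$, taking expectations to kill the local martingale, gives
\[
 \mathbb E\!\left[e^{a(t\wedge\tau_n)} U(X_{t\wedge\tau_n}^x)\right] - U(x) \leq \frac{b}{a}(e^{at}-1),
\]
and Fatou's lemma as $n\to\infty$ yields $e^{at}P_t U(x)-U(x)\leq \frac{b}{a}(e^{at}-1)$, which rearranges to \eqref{eq:hyp-3-b-implies-bounded-PW}.

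For (ii), let $\Pi\in\mathcal P_{\mathrm{inv}}(M)$ and fix $n\geq 1$. Invariance on the bounded function $U\wedge n$ gives $\Pi(U\wedge n)=\Pi(P_t(U\wedge n))$ for every $t\geq 0$. Combining (i) with $P_t(U\wedge n)\leq n$ yields the pointwise bound
\[
 P_t(U\wedge n)(x) \leq \min\!\left(n,\, e^{-at}U(x) + \tfrac{b}{a}\right),
\]
whose right-hand side is monotone in $t$ and converges to $\min(n,b/a)$ as $t\to\infty$. Dominated convergence (constant $n$ is $\Pi$-integrable) gives $\Pi(U\wedge n)\leq \min(n,b/a)$; taking $n>b/a$ yields $\Pi(U\wedge n)\leq b/a$, and monotone convergence in $n$ delivers $\Pi(U)\leq b/a$.

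For (iii), first apply Jensen's inequality (concavity of $\sqrt\cdot$) and the sub-additivity $\sqrt{u+v}\leq\sqrt u+\sqrt v$ to (i) at $t=1$, obtaining
\[
 P_1\sqrt U(x) \leq \sqrt{P_1 U(x)} \leq c\,\sqrt{U(x)} + d, \qquad c:=\sqrt{e^{-a}}\in(0,1),\ d:=\sqrt{b/a}.
\]
On the integer skeleton with $u_k:=\sqrt{U(X_k^x)}$ and martingale differences $\xi_k:=u_k-\mathbb E[u_k\mid\mathcal F_{k-1}]$, the recursion $u_k\leq c\,u_{k-1}+d+\xi_k$ sums to
\[
 (1-c)\sum_{k=0}^{N-1}u_k \leq u_0 + Nd + M_N, \qquad M_N:=\sum_{k=1}^N\xi_k.
\]
The conditional variance bound $\mathbb E[\xi_k^2\mid\mathcal F_{k-1}]\leq P_1 U(X_{k-1})\leq e^{-a}U(X_{k-1})+b/a$ combined with (i) gives $\sup_k\mathbb E[\xi_k^2]<\infty$, so a standard $L^2$-martingale strong law yields $M_N/N\to 0$ almost surely, hence $\limsup_N N^{-1}\sum_{k=0}^{N-1}u_k\leq d/(1-c)$ almost surely.

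To pass to the continuous-time integral, set $Y_k:=\int_k^{k+1}\sqrt{U(X_s^x)}\,ds$ and bound $\mathbb E[Y_k\mid\mathcal F_k]=\int_0^1 P_s\sqrt U(X_k)\,ds\leq u_k+d$, using the same Jensen/sub-additivity trick together with $\tfrac{2}{a}(1-e^{-a/2})\leq 1$. A second martingale SLLN applied to $Y_k-\mathbb E[Y_k\mid\mathcal F_k]$ (second moments controlled via $\mathbb E[\int_k^{k+1}U(X_s)\,ds\mid\mathcal F_k]\leq U(X_k)+b/a$) gives
\[
 \limsup_{N\to\infty}\frac{1}{N}\sum_{k=0}^{N-1}Y_k \leq \frac{d}{1-c}+d = \frac{2-c}{1-c}\,d \quad \text{a.s.},
\]
and interpolation between integer times transfers this to $\limsup_{t\to\infty}\Pi_t^x(\sqrt U)$. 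Tightness of $(\Pi_t^x)$ is then Markov's inequality applied to the proper function $\sqrt U$, and the Krylov--Bogolyubov argument (valid by the Feller property from Proposition \ref{prop:hyp-LU-implies}) guarantees that every weak limit lies in $\mathcal P_{\mathrm{inv}}(M)$.

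The main technical obstacle is the continuous-time extension in (iii): one must control both the within-interval fluctuations $Y_k-\mathbb E[Y_k\mid\mathcal F_k]$ and the associated martingale remainder almost surely, and the precise constant $\tfrac{2-c}{1-c}$ arises only by combining the skeleton estimate with the additive $+d$ overhead coming from $\mathbb E[Y_k\mid\mathcal F_k]\leq u_k+d$; the second-moment bounds drawn from (i) applied to $U$ itself (not to $\sqrt U$) are precisely what allow both applications of the martingale strong law to go through.
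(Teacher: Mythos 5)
Your proof is correct and follows essentially the same route as the paper's. For (i) and (ii) the paper simply cites results from \cite{Benaim2022_Markov}, whereas you supply the standard Itô/localisation/Fatou argument and the truncation argument directly; for (iii) the decomposition is identical to the paper's — integer skeleton plus within-interval corrections $\Delta_{k+1}=\int_0^1\bigl(\sqrt{U(X_{k+s}^x)}-P_s\sqrt{U(X_k^x)}\bigr)\,ds$ — the only cosmetic difference being that you derive the skeleton bound $\limsup_N N^{-1}\sum_{k<N}\sqrt{U(X_k^x)}\le\sqrt{b/a}/(1-\sqrt{e^{-a}})$ from the one-step recursion $u_k\le c\,u_{k-1}+d+\xi_k$ and an $L^2$ martingale SLLN, while the paper cites \cite[Cor.~4.23]{Benaim2022_Markov} for the same bound, and both proofs then handle the correction term with a second martingale SLLN whose conditional second moments are controlled by $P_sU$ via (i). The constant $\frac{2-\sqrt{e^{-a}}}{1-\sqrt{e^{-a}}}\sqrt{b/a}$ arises in exactly the same way in both.
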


\begin{remark}\label{rem:P-inv-empty}
A consequence of the tightness of $(\Pi^{x}_{t})_{t\geq 0}$ is that both $\mathcal{P}_{\mathrm{inv}}( M)$ and $\mathcal{P}_{\mathrm{inv}}(M_0)$ are non‐empty. However, $\mathcal{P}_{\mathrm{inv}}(M_+)$ may still be empty.
\end{remark}

Given the \emph{$1-$resolvent kernel} $G$ defined as
\begin{equation}\label{eq:1-dim-resolvent-kernel}
 Gf = \int_0^{\infty} e^{-t}P_t f \mathrm dt, \quad \forall f\in \mathcal B( M),
\end{equation}
a map $h\in \mathcal B( M)$ is called \emph{$(G,\mu)-$invariant} if $Gh = h,$ $\mu-$almost surely. Similarly, a measurable set $B\subset M$ is called $(G,\mu)-$invariant if $\mathbf{1}_B$ is $(G,\mu)-$invariant. An invariant probability measure $\mu\in \mathcal P_{inv}( M)$ is called \emph{ergodic} if every $(G,\mu)-$invariant map is $\mu-$almost surely constant. 

Denote the set of ergodic measures by $\mathcal P_{erg}( M)$. If $\mathcal M$ denotes any of the subsets of $M_0^I$ or $M_+^I$ as defined before, let $$\mathcal P_{erg}(\mathcal M)=\{\mu \in \mathcal P_{erg}( M): \mu(\mathcal M)=1\}.$$

Denote by $\mathcal D_e^{\mathcal M}$ the vector space called the \emph{extended domain} and $\mathcal L_e^{\mathcal M}:\mathcal D_e^{\mathcal M}\to C(\mathcal M)$ the linear map called the \emph{extended generator}. Then, $\mathcal D_e^{\mathcal M}$ is formed by continuous function $f:\mathcal M\to\mathbb R$, possibly unbounded, such that for all $x\in \mathcal M$, the process 
\begin{equation}\label{eq:extended-generator-implies}
 M^{f}_{t}(x)=f(X^{x}_{t})-f(x)-\int_{0}^{t}\mathcal{L}^{\mathcal M}_{\mathrm{e}}f\bigl(X^{x}_{s}\bigr)\,d s,\quad t\geq 0,
\end{equation}
is a $\left((\mathcal F_t\right)_{t\geq 0}\mathbb P_x)-$local martingale.

Another powerful tool is the \emph{extended carré du champ}.
\begin{definition}\label{def:carre-du-champ}
 Let $f\in\mathcal D_e^{\mathcal M}$ such that $f^2\in\mathcal D_e^{\mathcal M}$: we say that $f\in\mathcal D_e^{2,\mathcal M}$ on which we define the \emph{extended carré du champ} as $$\Gamma_e^{\mathcal M}f:=\mathcal L_e^{\mathcal M}f^2-2f\mathcal L_e^{\mathcal M}f.$$
\end{definition}

It is well-known that for $f\in \mathcal D_e^{2,\mathcal M}$ and for any $x\in\mathcal M$, the process $$(M^f_t)^2(x)-\int_0^t\Gamma_e^{\mathcal M}f(X_s^x)ds, \quad \forall t\geq 0,$$ is a $\left((\mathcal F_t\right)_{t\geq 0}\mathbb P_x)-$local martingale, and in particular $$\langle M^f(x) \rangle_t=\int_0^t \Gamma_e^{\mathcal M}(f)(X_s^x)ds,$$ where $\langle M^f(x) \rangle_t$ is the predictable quadratic variation of $(M_t^f(x))_{t\geq 0}$. For shortness, we let $$\mathcal D_e^{ M}=\mathcal D_e, \ \mathcal L_e^{ M}=\mathcal L_e, \ \mathcal D_e^{2, M}=\mathcal D_e^2, \ \Gamma_e^{ M} =\Gamma_e,$$ and $$\mathcal D_e^{M_+^I}=\mathcal D_e^I, \ \mathcal L_e^{M_+^I}=\mathcal L_e^I, \ \mathcal D_e^{2,M_+^I}=\mathcal D_e^{2,I}, \ \Gamma_e^{M_+^I}=\Gamma_e^I.$$ When $I=\{1,\cdots, n\}$, we simply replace the $I$ exponents by $+$.

A direct consequence of Proposition \ref{prop:hyp-LU-implies} and Itô's formula is the following relation.
\begin{proposition}\label{prop:domain-definition-c2}
 For all $f\in C^{2}( M)$, $f\in\mathcal{D}_{\mathrm{e}}^2$, $\mathcal L_{\mathrm{e}}f=L f$ and $\Gamma_e f = \Gamma  f$.
\end{proposition}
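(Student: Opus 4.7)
The plan is to apply Itô's formula to $f(X_t^x)$ and identify both the drift and the martingale part, then repeat the exercise for $f^2$ to handle the carré du champ.

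More precisely, fix $x\in M$ and $f\in C^2(M)$. By Proposition \ref{prop:hyp-LU-implies} the process $(X_t^x)_{t\geq 0}$ is a non‑exploding semimartingale on $M$ with the local semimartingale characteristics read off from (\ref{eq:kolmogorov}). Since $f$ is $C^2$, Itô's formula applies componentwise and yields, for every $t\geq 0$,
\begin{equation*}
 f(X_t^x)=f(x)+\int_0^t Lf(X_s^x)\,ds+\sum_{i=1}^n\sum_{j=1}^m\int_0^t X_{s,i}^x\,\Sigma_i^j(X_s^x)\frac{\partial f}{\partial x_i}(X_s^x)\,dB_s^j,
\end{equation*}
where $Lf$ is exactly the operator defined in (\ref{eq:generator-kolmogorov-general}). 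The stochastic integral is a continuous local martingale by standard localization along $\tau_n=\inf\{t\geq 0:\|X_t^x\|\geq n\}$ (with $\tau_n\uparrow\infty$ by non‑explosion), so the process $M_t^f(x)=f(X_t^x)-f(x)-\int_0^t Lf(X_s^x)\,ds$ defined in (\ref{eq:extended-generator-implies}) is a local martingale. This shows $f\in\mathcal D_e$ with $\mathcal L_e f=Lf$.

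Applying the same argument to $f^2\in C^2(M)$ gives $f^2\in\mathcal D_e$ with $\mathcal L_e(f^2)=L(f^2)$, hence $f\in\mathcal D_e^2$. It remains to identify $\Gamma_e f$ with $\Gamma f$: by the chain rule
\begin{equation*}
 \frac{\partial^2(f^2)}{\partial x_i\partial x_j}=2f\,\frac{\partial^2 f}{\partial x_i\partial x_j}+2\,\frac{\partial f}{\partial x_i}\frac{\partial f}{\partial x_j},\qquad \frac{\partial(f^2)}{\partial x_i}=2f\,\frac{\partial f}{\partial x_i},
\end{equation*}
so that plugging these identities into (\ref{eq:generator-kolmogorov-general}) produces $L(f^2)=2fLf+\sum_{i,j}x_ix_j a_{ij}(x)\frac{\partial f}{\partial x_i}\frac{\partial f}{\partial x_j}$. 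Recalling (\ref{eq:extended-carre-du-champ-kolmogorov-general}), the last sum is precisely $\Gamma f$, hence
\begin{equation*}
 \Gamma_e f=\mathcal L_e(f^2)-2f\mathcal L_e f=L(f^2)-2fLf=\Gamma f,
\end{equation*}
which concludes the proof.

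There is no real obstacle here: the only subtle point is the local martingale property of the Itô integral when $\nabla f$ is unbounded on $M$, but this is dealt with by the standard localization along the exit times $\tau_n$, which exhausts $[0,\infty)$ thanks to the non‑explosion statement in Proposition \ref{prop:hyp-LU-implies}.
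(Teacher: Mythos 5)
Your proof is correct and follows essentially the same route as the paper's: Itô's formula applied to $f$ (and then $f^2$) to identify the local martingale part, with the carré du champ obtained by the same chain-rule expansion, merely organized as a single computation of $L(f^2)$ rather than separate expansions of $\mathcal L_e(f^2)$ and $2f\mathcal L_e f$. Your extra remark about localizing along $\tau_n$ to handle unbounded $\nabla f$ is a useful clarification that the paper leaves implicit.
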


\begin{proof}
Let $f\in C^2( M)$, then the property $\mathcal{L}_{\mathrm{e}}(f)=Lf$ follows from Itô’s formula since the process $$f(X_t^x)-f(x)-\int_0^t Lf(X_s^x)ds= \sum_{i=1}^n\int_0^t\frac{\partial f}{\partial x_i}\bigl(X^x_s\bigr) \left[X^x_{s,i}\sum_{j=1}^m\Sigma^j_i\bigl(X^x_s\bigr)\right] dB^j_s,$$ is a local martingale by continuity of $t\mapsto X_t^x$ and the non-explosion of the solution. For $\Gamma_{\mathrm{e}}$, let $f\in C^2( M)$. Since
\begin{align*}
 \mathcal L_e(f^2)&=\sum_{i=1}^n x_iF_i(x)\frac{\partial f^2}{\partial x_i}(x)+\frac{1}{2}\sum_{i,j=1}^n x_ix_ja_{ij}(x)\frac{\partial^2 f^2}{\partial x_i\partial x_j}(x)\\
 &=\sum_{i=1}^n x_iF_i(x)2f(x)\frac{\partial f}{\partial x_i}(x)+\frac{1}{2}\sum_{i,j=1}^n x_ix_ja_{ij}(x)\left(2\frac{\partial f}{\partial x_i}(x)\frac{\partial f}{\partial x_j}(x)+2f(x)\frac{\partial^2 f}{\partial x_i\partial x_j}(x)\right),
\end{align*}

and $$2f\mathcal L_e(f)=\sum_{i=1}^n x_iF_i(x)2f(x)\frac{\partial f}{\partial x_i}(x)+\frac{1}{2}\sum_{i,j=1}^n x_ix_ja_{ij}(x)2f(x)\frac{\partial^2 f}{\partial x_i\partial x_j}(x),$$ it follows that $$\Gamma_e(f)(x) = \mathcal L_e(f^2)-2f\mathcal L_e(f) = \sum_{i,j=1}^n x_ix_ja_{ij}(x)\frac{\partial f}{\partial x_i}(x)\frac{\partial f}{\partial x_j}(x)=\Gamma (f)(x).$$
\end{proof}

\begin{definition}\label{def:strong-law}
 Let $f\in \mathcal{D}^{\mathcal M}_{\mathrm{e}}$. We say that $f$ satisfies the \emph{strong law} if, for every $x\in \mathcal M$,
 \begin{equation}\label{eq:strong-law}
  \lim_{t\to\infty}\frac{M^{f}_{t}(x)}{t}=0\quad\mathbb{P}^{x}\text{-a.s.}
 \end{equation}
\end{definition}

The following tool is a key result to ensure that the local martingale $(M_t^f(x))_{t\geq 0}$ defined in (\ref{eq:extended-generator-implies}) is a true martingale and in addition that $f$ satisfies the strong law.

\begin{proposition}\label{prop:extended-carre-du-champ-strong-law}
 Let $ f \in \mathcal{D}^{\mathcal M}_e $. Consider the following assertions:
 \begin{enumerate}[label=(\textbf{\roman*})]
  \item $ f \in \mathcal{D}^{2, \mathcal M}_e $, and for all $ x \in \mathcal M $, $$ \limsup_{t \to \infty} \frac{1}{t} \int_0^t P_s(\Gamma^{\mathcal M}_e(f))(x) ds < \infty;$$
  \item For all $ x \in \mathcal M $, $ (M^f_t(x))_{t \geq 0} $ is a true $ L^2 \ ((\mathcal{F}_t)_{t \geq 0}, \mathbb{P}_{x}) -$martingale, and
  \begin{equation}\label{eq:square-martingale}
   \limsup_{t \to \infty} \frac{\mathbb{E}^x \left[ (M^f_t(x))^2 \right]}{t} < \infty;
  \end{equation}
  \item $f$ satisfies the strong law.
\end{enumerate}
Then $$\textbf{(i)} \Rightarrow \textbf{(ii)} \Rightarrow \textbf{(iii)}.$$
\end{proposition}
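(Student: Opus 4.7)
The plan is to prove $\textbf{(i)} \Rightarrow \textbf{(ii)}$ by identifying the predictable quadratic variation of $M^f(x)$ with the time integral of the carré du champ, and then to derive $\textbf{(ii)} \Rightarrow \textbf{(iii)}$ via Doob's $L^2$ maximal inequality on dyadic intervals combined with Borel--Cantelli.

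For $\textbf{(i)} \Rightarrow \textbf{(ii)}$: Since $f \in \mathcal{D}_e^{2,\mathcal M}$, the discussion following Definition \ref{def:carre-du-champ} provides $\langle M^f(x)\rangle_t = \int_0^t \Gamma_e^{\mathcal M}(f)(X_s^x)\,ds$. I would introduce a localizing sequence $(\tau_n)_{n\geq 0}$ making $M^f(x)$ and $(M^f_t(x))^2 - \langle M^f(x)\rangle_t$ into true uniformly integrable martingales, and use the non‑negativity of $\Gamma_e^{\mathcal M}(f)$ (which follows from Proposition \ref{prop:domain-definition-c2} and positive semi‑definiteness of $a$) to apply Fubini on the right‑hand side after taking expectations. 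Letting $n\to\infty$ via Fatou and monotone convergence yields the isometry
$$\mathbb{E}^x\bigl[(M^f_t(x))^2\bigr] = \mathbb{E}^x\bigl[\langle M^f(x)\rangle_t\bigr] = \int_0^t P_s\bigl(\Gamma_e^{\mathcal M}(f)\bigr)(x)\,ds.$$
Assumption $\textbf{(i)}$ forces the right‑hand side to be $O(t)$, which makes $M^f(x)$ an $L^2$‑martingale on every finite interval and immediately yields $\limsup_{t\to\infty}\mathbb{E}^x[(M^f_t(x))^2]/t < \infty$.

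For $\textbf{(ii)} \Rightarrow \textbf{(iii)}$: pick $C$ strictly above the $\limsup$, so that $\mathbb{E}^x[(M^f_t)^2]\leq Ct$ for all sufficiently large $t$. Doob's $L^2$ maximal inequality applied to the continuous martingale $M^f(x)$ on $[0,2^{k+1}]$ gives
$$\mathbb{E}^x\!\left[\sup_{0 \leq s \leq 2^{k+1}} (M^f_s(x))^2\right] \leq 4\,\mathbb{E}^x\bigl[(M^f_{2^{k+1}}(x))^2\bigr] \leq 8C\cdot 2^{k}.$$
For any $\varepsilon>0$, Markov's inequality then yields
$$\mathbb{P}^x\!\left(\sup_{t \in [2^k, 2^{k+1}]} \frac{|M^f_t(x)|}{t} > \varepsilon\right) \leq \mathbb{P}^x\!\left(\sup_{s \leq 2^{k+1}} |M^f_s(x)| > 2^k\varepsilon\right) \leq \frac{8C}{2^k\varepsilon^2},$$
which is summable in $k$. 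Borel--Cantelli then implies $\limsup_{t\to\infty}|M^f_t(x)|/t \leq \varepsilon$ almost surely, and since $\varepsilon$ is arbitrary, $M^f_t(x)/t \to 0$ almost surely, which is the strong law.

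The main obstacle is the first implication: one must promote a merely local martingale to a genuine $L^2$‑martingale starting only from a time‑averaged bound on $P_s(\Gamma_e^{\mathcal M}(f))$, which requires a careful localization together with non‑negativity of the carré du champ to justify the Fubini exchange. Once in the $L^2$ regime, $\textbf{(ii)} \Rightarrow \textbf{(iii)}$ is a classical dyadic Doob--Borel--Cantelli argument.
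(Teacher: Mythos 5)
Your proof takes essentially the same route as the paper: localization by $(\tau_n)$, non-negativity of $\Gamma_e^{\mathcal M}(f)$ and Fubini to bound $\mathbb{E}^x[(M^n_t)^2]$ by $\int_0^t P_s(\Gamma_e^{\mathcal M}(f))(x)\,ds$, $L^2$-boundedness giving uniform integrability and hence passage to the limit in the martingale property, then Doob's $L^2$ maximal inequality on dyadic blocks with Markov and Borel--Cantelli for the strong law. One small overstatement: you assert the isometry $\mathbb{E}^x[(M^f_t)^2] = \int_0^t P_s(\Gamma_e^{\mathcal M}(f))(x)\,ds$, but the Fatou/monotone-convergence route you describe only delivers the inequality $\mathbb{E}^x[(M^f_t)^2] \leq \int_0^t P_s(\Gamma_e^{\mathcal M}(f))(x)\,ds$, which is exactly what the paper uses and is all that is needed; the equality is still true (it is the standard fact that a local martingale with integrable bracket is a genuine $L^2$ martingale with $\mathbb{E}[M_t^2]=\mathbb{E}[\langle M\rangle_t]$), but it does not follow from Fatou alone.
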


The proof is postponed in Appendix. As a direct consequence, we obtain the following criteria to guarantee whenever the local martingale $(M_t^f)_{t\geq 0}$ is a true martingale satisfying the strong law.

\begin{corollary}\label{cor:hyp-3-implies}
Let $U$ be as in Hypothesis \ref{hyp:main-hyp} and $f \in \mathcal{D}^{2, \mathcal M}_e $ be such that \begin{equation}\label{eq:condition-Gamma-U-implies-strong-law}
    \Gamma^{\mathcal M}_e(f)(x) \leq \tilde{a} U(x) + \tilde{b},
\end{equation} for all $ x \in \mathcal M$ with $\tilde a>0$, $\tilde b\geq 0$. Then, for all $x\in\mathcal M$, $(M^f_t(x))_{t \geq 0} $ is a true $ L^2 $ martingale and $f$ satisfies the strong law.
\end{corollary}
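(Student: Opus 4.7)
The plan is to invoke Proposition \ref{prop:extended-carre-du-champ-strong-law} by checking its hypothesis \textbf{(i)}: since $f\in\mathcal{D}_e^{2,\mathcal M}$ is given, the only thing to verify is that
\[
\limsup_{t\to\infty}\frac{1}{t}\int_0^t P_s\bigl(\Gamma_e^{\mathcal M}(f)\bigr)(x)\,ds<\infty
\]
for every $x\in\mathcal M$. The natural route is to combine the pointwise bound \eqref{eq:condition-Gamma-U-implies-strong-law} with the semigroup estimate on $U$ provided by Proposition \ref{prop:conditions-hypotheses-hold}\textbf{(i)}.

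More concretely, first I would observe that $\Gamma_e^{\mathcal M}(f)\geq 0$ by definition of the carré du champ (so $P_s$ acts monotonically on the inequality), and that $U\geq 1$ is nonnegative, so applying $P_s$ to \eqref{eq:condition-Gamma-U-implies-strong-law} yields
\[
P_s\bigl(\Gamma_e^{\mathcal M}(f)\bigr)(x)\leq \tilde a\,P_sU(x)+\tilde b\leq \tilde a\left(e^{-as}U(x)+\frac{b}{a}\right)+\tilde b,
\]
where $a,b$ are the constants of Hypothesis \ref{hyp:main-hyp}. Integrating in $s$ between $0$ and $t$ and dividing by $t$ gives
\[
\frac{1}{t}\int_0^t P_s\bigl(\Gamma_e^{\mathcal M}(f)\bigr)(x)\,ds\leq \tilde a\,U(x)\,\frac{1-e^{-at}}{at}+\frac{\tilde a\,b}{a}+\tilde b,
\]
whose $\limsup$ as $t\to\infty$ is bounded by $\frac{\tilde a\,b}{a}+\tilde b<\infty$. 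This establishes \textbf{(i)} of Proposition \ref{prop:extended-carre-du-champ-strong-law}, hence \textbf{(ii)} and \textbf{(iii)}, giving the $L^2$ martingale property of $(M_t^f(x))_{t\geq 0}$ and the strong law for $f$.

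There is essentially no obstacle here: the statement is a corollary precisely because the Lyapunov drift inequality on $U$ transfers a pointwise domination of $\Gamma_e^{\mathcal M}(f)$ into an $L^1$-Cesàro bound along the semigroup. The only subtlety worth flagging in the write‑up is the justification that $P_s$ may be applied to $\Gamma_e^{\mathcal M}(f)$ even though this function need not be bounded: this is legitimate because $\Gamma_e^{\mathcal M}(f)$ is nonnegative and dominated by the proper function $\tilde aU+\tilde b$, for which $P_sU$ is finite by \eqref{eq:hyp-3-b-implies-bounded-PW}, so the monotone extension of $P_s$ to nonnegative measurable functions applies without ambiguity.
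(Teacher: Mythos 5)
Your proof is correct and follows essentially the same approach as the paper's: apply $P_s$ to the pointwise bound on $\Gamma_e^{\mathcal M}(f)$, use the Cesàro bound on $P_sU$ coming from \eqref{eq:hyp-3-b-implies-bounded-PW}, and invoke Proposition \ref{prop:extended-carre-du-champ-strong-law}\textbf{(i)}. Your write-up is a bit more explicit (spelling out the exponential decay and flagging why $P_s$ may be applied to the possibly unbounded nonnegative function $\Gamma_e^{\mathcal M}(f)$), but the underlying argument is the same.
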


\begin{proof}
By (\ref{eq:hyp-3-b-implies-bounded-PW}), $$\frac{1}{t}\int_0^t P_s U(x)ds\leq  \frac{U(x)}{at} + \frac{b}{a}.$$

Combined with (\ref{eq:condition-Gamma-U-implies-strong-law}), it follows that
\begin{equation*}
    \limsup_{t\to\infty}\frac{1}{t} \int_0^t P_s (\Gamma_e(f))(X_s^x)ds \leq \limsup_{t\to\infty} \frac{\tilde a}{t} \int_0^t P_sU(x)ds + \tilde b\leq \limsup_{t\to\infty}\frac{\tilde a U(x)}{at}+\frac{\tilde a b}{a} +\tilde b<\infty.
\end{equation*}
Thus, Proposition \ref{prop:extended-carre-du-champ-strong-law}\textit{\textbf{(i)}} holds so $(M^f_t(x))_{t \geq 0} $ is a true $ L^2$ martingale and $f$ satisfies the strong law.
\end{proof}

\section[Stochastic Persistence]{An Introduction to Stochastic Persistence}\label{section:persistence}

We focus on Kolmogorov SDEs of the form of (\ref{eq:kolmogorov}). Recall that Hypothesis \ref{hyp:main-hyp} is assumed to be true. From Remark \ref{rem:P-inv-empty}, we now want to ensure that $\mathcal{P}_{inv}(M_+)$ is non-empty. To this effect, we introduce the notion of \emph{stochastic persistence}.

The following definition, inspired by the seminal work of Chesson in \cite{Chesson1978_Predator}, \cite{Chesson1982_Stabilizing}, follows the presentation of Schreiber in \cite{Schreiber2012_Persistence}.

Recall that for any $I\subset\{1,\cdots, n\}$, $M_0^I$ denotes the extinction set of species $i\in I$, as in (\ref{eq:definition-M-0-I}), while $M_+^I$ is the non-extinction set of species $i\in I$, as in (\ref{eq:definition-M-+-I}).

\begin{definition}\label{def:stochastic-persistence}
 The family of processes $\{(X^{x}_{t})_{t\geq 0}:x\in M_+^I\}$ is called \emph{stochastically persistent} if for every $\varepsilon>0$ there exists a compact set $K_{\varepsilon}\subset M_+^I$ such that, for all $x\in M_+^I$, $$\mathbb{P}\Bigl(\liminf_{t\to\infty}\Pi^{x}_{t}(K_{\varepsilon})\geq 1-\varepsilon\Bigr)=1.$$
\end{definition}

Analyzing stochastic persistence requires control of the process near the extinction set $M_0^I$. In addition to the original Lyapunov-type condition in Hypothesis \ref{hyp:main-hyp}, we introduce the following control near extinction.

\begin{definition}\label{def:H-persistence}
 The family $\{(X^{x}_{t})_{t\geq 0}:x\in M_+^I\}$ is called \emph{$H$‑persistent} if there exist continuous maps $V\colon M_+^I\to\mathbb{R}_+$ and $H\colon M\to\mathbb{R}$ such that
 \begin{enumerate}[label=(\textbf{\roman*})]
  \item $V\in D_e^{I}$ and $\mathcal L_{\mathrm{e}}^I(V)=H|_{M_+^I}$ ;
  \item $V$ satisfies the strong law (Definition \ref{def:strong-law});
  \item $\frac{\sqrt{U}}{1+|H|}$ is proper, where $U$ is like in Hypothesis \ref{hyp:main-hyp} and $\sqrt{U}$ satisfies the strong law.
   \item For all $\mu\in\mathcal{P}_{\mathrm{erg}}(M_0^I)$, $\mu H<0$.
 \end{enumerate}
\end{definition}

Under \textit{\textbf{(iii)}}, $\sqrt{U}$ dominates $|H|$ outside a compact set so that $H\in L^{1}(\mu)$ for all $\mu\in\mathcal{P}_{\mathrm{inv}}( M)$ by Proposition \ref{prop:conditions-hypotheses-hold}\textit{\textbf{(ii)}} (see Remark \ref{rem:P-inv-empty}), and $\mu H$ above is well-defined. In fact, condition \textit{\textbf{(iv)}} is a shortcut for a broader tool defined as the \emph{$H-$exponents}.

\begin{definition}\label{def:H-exponents}
 Given $V$ and $H$ as in Definition \ref{def:H-persistence}, define the \emph{lower} and \emph{upper $H$–exponents} of $\{(X^x_{t})_{t\geq0}:x\in M_+^I\}$ by $$\Lambda^{-}(H):=-\sup \{\mu H : \mu\in\mathcal{P}_{\mathrm{erg}}(M_0^I)\},\ \Lambda^{+}(H):=-\inf\{\mu H: \mu\in\mathcal{P}_{\mathrm{erg}}(M_0^I)\}.$$

 In particular, condition \textit{\textbf{(iv)}} of Definition \ref{def:H-persistence} rewrites $\Lambda^-(H)>0.$
\end{definition}

\begin{remark}\label{rem:VH-domain}
The key point in Definition \ref{def:H-persistence} is that $H$ is defined on the whole space $ M$, whereas $V$ is only defined on $M_+^I$ and typically $V(x)\to\infty$ as $x$ approaches $M_0^I$. 
If, in fact, one could extend $V$ to all of $ M$ while keeping condition \textit{\textbf{(i)}} of Definition \ref{def:H-persistence} valid on $ M$, then $$\Lambda^{-}(H)=\Lambda^{+}(H)=0.$$
\end{remark}

A major consequence of $H-$persistence is that it yields the stochastic persistence and in particular the almost sure convergence of $(\Pi_t^x)_{t\geq 0}$ whose limit points lies in $\mathcal P_{inv}(M_0^I)$.

\begin{theorem}[\cite{Benaim2018_Persistence}), Theorem 4.4]\label{thm:HtoStoch}
 Assume that the family $\{(X^{x}_{t})_{t\geq 0}:x\in M_+^I\}$ is $H$‑persistent.
 \begin{enumerate}[label=(\textbf{\roman*})]
  \item For every $x\in M_+^I$, every weak limit point of $(\Pi^{x}_{t})_{t\geq 0}$ lies in $\mathcal{P}_{\mathrm{inv}}(M_+^I)$ a.s.;
  \item The process is stochastically persistent in the sense of Definition \ref{def:stochastic-persistence}.
 \end{enumerate}
\end{theorem}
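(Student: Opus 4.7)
The plan is to convert the $H$-persistence hypothesis into an asymptotic lower bound on the time averages $\Pi_t^x(H)$, and then use an ergodic decomposition of the weak limits of $(\Pi_t^x)_{t\geq 0}$ to exclude mass on $M_0^I$. First, by Definition \ref{def:H-persistence}\textit{\textbf{(i)}}, the process
\[ M_t^V(x) = V(X_t^x) - V(x) - \int_0^t H(X_s^x)\,ds \]
is a $\mathbb{P}_x$-local martingale for every $x\in M_+^I$. Dividing by $t$, using $V\geq 0$, and invoking the strong law for $V$ from \textit{\textbf{(ii)}}, I get
\[ \Pi_t^x(H) \;=\; \frac{V(X_t^x) - V(x)}{t} - \frac{M_t^V(x)}{t} \;\geq\; -\frac{V(x)}{t} - \frac{M_t^V(x)}{t}, \]
so $\liminf_{t\to\infty} \Pi_t^x(H) \geq 0$ almost surely.

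Next I would control the weak limits. Proposition \ref{prop:conditions-hypotheses-hold}\textit{\textbf{(iii)}} ensures that $(\Pi_t^x)_{t\geq 0}$ is almost surely tight with limit points in $\mathcal{P}_{\mathrm{inv}}(M)$ and that $\limsup_t \Pi_t^x(\sqrt U) < \infty$. Properness of $\sqrt U/(1+|H|)$ from Definition \ref{def:H-persistence}\textit{\textbf{(iii)}} means that for every $\epsilon>0$ one has $|H|\leq \epsilon \sqrt U$ outside a compact set, which yields uniform integrability of $H$ under the family $(\Pi_t^x)_{t\geq 0}$. Consequently, along any subsequence such that $\Pi_{t_n}^x \Rightarrow \Pi^*$, one obtains $\Pi_{t_n}^x(H)\to \Pi^* H$, and therefore every weak limit point $\Pi^*\in\mathcal{P}_{\mathrm{inv}}(M)$ satisfies $\Pi^* H \geq 0$.

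The core of the argument is to rule out mass on $M_0^I$. I would ergodically decompose $\Pi^* = \int \nu\,dm(\nu)$; since $M_0^I$ and $M_+^I$ are both invariant, each ergodic component $\nu$ lies in $\mathcal{P}_{\mathrm{erg}}(M_0^I)$ or $\mathcal{P}_{\mathrm{erg}}(M_+^I)$. Hypothesis \textit{\textbf{(iv)}} gives $\nu H \leq -\Lambda^-(H)<0$ in the first case. In the second, I would revisit the identity $V(X_t^x)/t = V(x)/t + \Pi_t^x(H) + M_t^V(x)/t$ at a $\nu$-generic $x\in M_+^I$: Birkhoff gives $\Pi_t^x(H)\to \nu H$, the strong law gives $M_t^V(x)/t\to 0$, so $V(X_t^x)/t \to \nu H$ almost surely. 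Positivity of $V$ forbids $\nu H<0$, while $\nu$-recurrence into compact subsets of $M_+^I$ (on which $V$ is bounded) forbids $\nu H>0$; thus $\nu H=0$. Combined with $\Pi^* H \geq 0$, this forces $m(\mathcal{P}_{\mathrm{erg}}(M_0^I))=0$, proving \textit{\textbf{(i)}}.

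For \textit{\textbf{(ii)}}, I would combine \textit{\textbf{(i)}} with tightness of $\mathcal{P}_{\mathrm{inv}}(M_+^I)$: Proposition \ref{prop:conditions-hypotheses-hold}\textit{\textbf{(ii)}} uniformly bounds $\nu(U)$ for $\nu\in\mathcal{P}_{\mathrm{inv}}(M_+^I)$, so given $\epsilon>0$ one extracts a compact $K_\epsilon\subset M_+^I$ with $\nu(K_\epsilon)\geq 1-\epsilon/2$ for all such $\nu$; applying Portmanteau to a slightly larger open neighborhood and using that every weak limit of $(\Pi_t^x)$ sits in $\mathcal{P}_{\mathrm{inv}}(M_+^I)$ a.s.\ yields $\liminf_t \Pi_t^x(K_\epsilon)\geq 1-\epsilon$ almost surely. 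The main obstacle I anticipate is the ergodic step for $\nu\in\mathcal{P}_{\mathrm{erg}}(M_+^I)$: one must justify passing to the limit in $V(X_t^x)/t$ and turn positivity and recurrence into the strict conclusion $\nu H=0$, which is delicate because $V$ blows up near $M_0^I$ and may not be $\nu$-integrable, so the recurrence argument must rely on pathwise boundedness of $V$ on compacts of $M_+^I$ rather than on any a priori $L^1$ control.
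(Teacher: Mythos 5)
The paper cites this statement from \cite{Benaim2018_Persistence} and gives no proof, so your argument can only be compared to what that reference does. Your proof of part \textit{\textbf{(i)}} is essentially the standard argument and is sound: the martingale decomposition of $V$ plus the strong law gives $\liminf_{t\to\infty}\Pi_t^x(H)\ge 0$; properness of $\sqrt U/(1+|H|)$ together with $\limsup_t\Pi_t^x(\sqrt U)<\infty$ gives uniform integrability of $H$ under the empirical family, so weak limits $\Pi^*$ satisfy $\Pi^*H\ge 0$; the ergodic decomposition and the observation that $\nu H=0$ for $\nu\in\mathcal{P}_{\mathrm{erg}}(M_+^I)$ (your positivity/recurrence argument is correct and needs no $\nu$-integrability of $V$, only pathwise boundedness on a compact that the Birkhoff theorem makes recurrent) force $m\big(\mathcal{P}_{\mathrm{erg}}(M_0^I)\big)=0$. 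This is the right route.

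The gap is in part \textit{\textbf{(ii)}}. You write that the uniform bound $\nu(U)\le b/a$ for $\nu\in\mathcal{P}_{\mathrm{inv}}(M_+^I)$ lets you ``extract a compact $K_\epsilon\subset M_+^I$ with $\nu(K_\epsilon)\ge 1-\epsilon/2$ for all such $\nu$.'' This does not follow. The sublevel sets of $U$ are compact in $M=\mathbb{R}_+^n$, not in $M_+^I$: they meet $M_0^I$, and intersecting them with $M_+^I$ destroys compactness. Thus the $U$-bound controls escape of mass to infinity but says nothing about mass accumulating near the extinction set $M_0^I$, which is exactly the phenomenon that $H$-persistence is designed to rule out. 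Without a separate argument one cannot exclude a sequence $\nu_n\in\mathcal{P}_{\mathrm{inv}}(M_+^I)$ whose mass concentrates nearer and nearer to $M_0^I$ while $\nu_n(U)$ stays bounded. To close the gap you need to bring the $H$-structure back in at this stage --- for instance by combining the facts, already established in your argument for \textit{\textbf{(i)}}, that $\nu H=0$ for $\nu\in\mathcal{P}_{\mathrm{inv}}(M_+^I)$ while $\nu H\le -\Lambda^-(H)<0$ whenever $\nu$ charges $M_0^I$, together with the uniform integrability of $H$, to conclude that $\mathcal{P}_{\mathrm{inv}}(M_+^I)$ is closed (hence compact) in $\mathcal{P}_{\mathrm{inv}}(M)$; only then does the existence of a uniform compact $K_\epsilon\subset M_+^I$ follow. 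Alternatively, one can argue directly and quantitatively from $\liminf_t\Pi_t^x(H)\ge 0$ as in \cite{Benaim2018_Persistence}, constructing a proper function on $M_+^I$ whose time averages remain bounded. As written, the step is false and the remainder of \textit{\textbf{(ii)}} (Portmanteau) does not rescue it.
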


This theorem has the following immediate consequence.

\begin{corollary}\label{cor:persistent-measure}
 Assume that the family $\{(X^{x}_{t})_{t\geq 0}:x\in M_+^I\}$ is $H$‑persistent and that $\mathcal{P}_{\mathrm{inv}}(M_+^I)$ contains at most one probability measure. Then $\mathcal{P}_{\mathrm{inv}}(M_+^I)$ consists of a single measure, denoted $\{\Pi\}$, and for every $x\in M_+^I$, $$\Pi^{x}_{t} \Rightarrow \Pi,\quad\text{almost surely.}$$
\end{corollary}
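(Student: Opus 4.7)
The plan is to package Theorem \ref{thm:HtoStoch}\textit{\textbf{(i)}} together with the almost sure tightness supplied by Proposition \ref{prop:conditions-hypotheses-hold}\textit{\textbf{(iii)}}, and to close the argument with the standard fact that a tight family with at most one accumulation point must converge to that point.

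First I would fix $x \in M_+^I$ and work on the probability-one event $\Omega_x$ on which both the tightness conclusion of Proposition \ref{prop:conditions-hypotheses-hold}\textit{\textbf{(iii)}} and the limit-point conclusion of Theorem \ref{thm:HtoStoch}\textit{\textbf{(i)}} are valid. On $\Omega_x$, Prokhorov's theorem applied to any sequence $t_n \to \infty$ yields a subsequence $t_{n_k}$ along which $\Pi_{t_{n_k}}^x$ converges weakly to some $\mu \in \mathcal P(M)$; by Theorem \ref{thm:HtoStoch}\textit{\textbf{(i)}}, every such $\mu$ lies in $\mathcal{P}_{\mathrm{inv}}(M_+^I)$. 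In particular, on $\Omega_x$ this set is non-empty, and combined with the assumption $|\mathcal{P}_{\mathrm{inv}}(M_+^I)| \leq 1$ it must consist of exactly one measure $\Pi$. The uniqueness immediately promotes the corollary's first conclusion: $\mathcal{P}_{\mathrm{inv}}(M_+^I) = \{\Pi\}$.

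To pass from subsequential convergence to convergence of the full family, I would argue by contradiction in the usual way. If $\Pi_t^x$ did not converge weakly to $\Pi$ on $\Omega_x$, then by the Portmanteau characterization there would exist $f \in C_b(M)$, $\delta > 0$ and a sequence $t_n \to \infty$ with $|\Pi_{t_n}^x f - \Pi f| \geq \delta$; by tightness, this sequence admits a further weakly convergent subsequence whose limit, by the preceding paragraph, must equal $\Pi$, contradicting the inequality. Hence $\Pi_t^x \Rightarrow \Pi$ holds on $\Omega_x$, giving the second conclusion.

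The argument is essentially a repackaging of Theorem \ref{thm:HtoStoch} and contains no real obstacle; the only mild point to watch is that the null set is chosen separately for each initial condition $x \in M_+^I$ (which is consistent with the statement, as the "almost surely" is understood with respect to $\mathbb P_x$), and that the subsequential/Prokhorov argument used is insensitive to whether the time parameter is discrete or continuous.
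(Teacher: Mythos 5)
Your proof is correct and is precisely the implicit argument behind the paper's remark that the corollary is an "immediate consequence" of Theorem \ref{thm:HtoStoch}: combine the almost sure tightness from Proposition \ref{prop:conditions-hypotheses-hold}\textit{\textbf{(iii)}} with the fact that all weak limit points lie in $\mathcal{P}_{\mathrm{inv}}(M_+^I)$, note that since $\mathcal{P}_{\mathrm{inv}}(M_+^I)$ is a deterministic set whose at-most-one element must be attained by some limit point it is exactly a singleton $\{\Pi\}$, and then conclude by the standard tightness/unique-accumulation-point argument. The one stylistic remark is that "on $\Omega_x$ this set is non-empty" slightly conflates an $\omega$-independent object with an $\omega$-dependent assertion; the cleaner phrasing is that the existence (on the full-measure event) of a limit point lying in the deterministic set $\mathcal{P}_{\mathrm{inv}}(M_+^I)$ already forces that set to be non-empty.
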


When $I=\{1,\cdots, n\}$, we call $\Pi$ the \emph{persistent measure}. In ecological models, $\Pi$ characterizes the long‑term behavior of the coexisting species.

In practice, the study of stochastic persistence of Kolmogorov SDEs follows the ideas from \cite{benaim-24-conv-rate} and \cite{Benaim2018_Persistence}, inspired by \cite{Schreiber2012_Persistence}, \cite{Schreiber2011_Persistence} and \cite{Benaim2008_Robust}. We start by adapting the notion of \emph{invasion rate}: let
\begin{equation}\label{eq:invasion-point}
 \lambda_{i}(x)=F_{i}(x)-\frac{a_{ii}(x)}{2},\quad i=1,\cdots, n,
\end{equation}
be the invasion of species $i$ with respect to $x$. For any $\mu\in\mathcal{P}_{\mathrm{erg}}(M_0^I)$, we write
\begin{equation}\label{eq:invasion-measure}
 \mu\lambda_{i}:=\int_{ M}\lambda_{i}(x)\,\mu(\mathrm dx),
\end{equation}
whenever $\lambda_{i}\in L^{1}(\mu)$. We call $\mu\lambda_{i}$ the \emph{mean invasion rate} of species $i$ with respect to $\mu$. The next theorem extends the Hofbauer criterion (see e.g. \cite{Hofbauer_Criterion}) to possibly degenerate SDEs.

\begin{theorem}[\textbf{Criterion for $H$‐persistence}]\label{thm:invasion-criterion}
 Let $U$ be as in Hypothesis \ref{hyp:main-hyp}. Assume $U^{\frac{1}{2}}$ satisfies the strong law and
 \begin{align}
   \limsup_{||x||\to\infty}\frac{U^{\frac{1 }{2}}(x)}{1+\sum_{i\in I}|F_{i}(x)|}=\infty.\label{eq:growth} 
 \end{align}
 \begin{enumerate}[label=(\textbf{\roman*})]
  \item For every $\mu\in\mathcal{P}_{\mathrm{erg}}(M_0^I)$ and $i\in I$, one has $\lambda_{i}\in L^{1}(\mu)$ and $$\mu\lambda_{i}\neq 0\quad\Longrightarrow\quad \operatorname{supp}(\mu)\subset M_0^{(i)}.$$
  \item If in addition there exist positive numbers $\{p_{i}\}_{i\in I}$ such that
  \begin{equation}\label{eq:Hofbauer-cond}
   \sum_{i\in I} p_{i}\,\mu\lambda_{i}>0,\quad\text{for all }\mu\in\mathcal{P}_{\mathrm{erg}}(M_0^I),
  \end{equation}
  then $\{(X^{x}_{t})_{t\geq 0}:x\in M_+^I\}$ is $H$–persistent.
 \end{enumerate}
\end{theorem}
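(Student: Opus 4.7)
The plan is to take the log-coordinates as the natural Lyapunov functions on $M_+^{(i)}$ (resp.\ $M_+^I$) and extract $H$ from Itô's formula. For (i), I would fix $\mu\in\mathcal P_{\mathrm{erg}}(M_0^I)$ and $i\in I$, and first establish $\lambda_i\in L^1(\mu)$: the growth condition (\ref{eq:growth}) together with boundedness of the $\Sigma_i^j$ (hence of $a_{ii}$) yields $|\lambda_i(x)|\leq c_1+c_2\sqrt{U(x)}$, and Proposition \ref{prop:conditions-hypotheses-hold}(ii) combined with Cauchy--Schwarz gives $\mu(\sqrt U)<\infty$. Since $M_0^{(i)}$ is invariant under $(P_t)$ (Remark \ref{rem:invariant-extinction-set}) and $\mu$ is ergodic, one has the dichotomy $\mu(M_0^{(i)})\in\{0,1\}$; in the first case $\mathrm{supp}(\mu)\subset M_0^{(i)}$ and there is nothing left to prove.

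In the remaining case $\mu(M_+^{(i)})=1$, Itô's formula applied to $\log x_i$ gives, under $\mathbb P^\mu$,
$$\log x_i(t)-\log x_i(0)=\int_0^t\lambda_i(X_s)\,ds+N_t,$$
with $\langle N\rangle_t=\int_0^t a_{ii}(X_s)\,ds\leq Ct$. Since $\Gamma_e(\log x_i)=a_{ii}$ is bounded, Corollary \ref{cor:hyp-3-implies} yields $N_t/t\to 0$ a.s., while Birkhoff's theorem gives $\tfrac{1}{t}\int_0^t\lambda_i(X_s)\,ds\to\mu\lambda_i$ a.s., so $\tfrac{1}{t}\log x_i(t)\to\mu\lambda_i$ $\mathbb P^\mu$-a.s. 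If $\mu\lambda_i>0$ this forces $x_i(t)\to\infty$ a.s., hence $\mathbb P^\mu(x_i(t)>R)\to 1$ for every $R>0$; but stationarity gives $\mathbb P^\mu(x_i(t)>R)=\mu\{x_i>R\}$, which tends to $0$ as $R\to\infty$, a contradiction. The case $\mu\lambda_i<0$ is symmetric: $x_i(t)\to 0$ a.s. would give $\mu\{x_i<\epsilon\}=1$ for every $\epsilon>0$, hence $\mu\{x_i=0\}=1$, contradicting $\mu(M_+^{(i)})=1$. Thus $\mu\lambda_i=0$, which is the contrapositive of (i).

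For (ii), I would set $V(x)=-\sum_{i\in I}p_i\log x_i$ on $M_+^I$ and $H(x)=-\sum_{i\in I}p_i\lambda_i(x)$ on all of $M$, and verify the four clauses of Definition \ref{def:H-persistence}. Itô's formula gives $V\in\mathcal D_e^I$ with $\mathcal L_e^I V=H|_{M_+^I}$, which is clause (i); the extended carré du champ $\Gamma_e^I V=\sum_{i,j\in I}p_ip_j a_{ij}$ is bounded so Corollary \ref{cor:hyp-3-implies} delivers the strong law for $V$, which is clause (ii); boundedness of the $a_{ii}$ combined with (\ref{eq:growth}) forces $\sqrt U/(1+|H|)$ to be proper, and the strong law for $\sqrt U$ is assumed, giving clause (iii); finally part (i) makes $\mu\lambda_i$ well-defined and (\ref{eq:Hofbauer-cond}) gives $\mu H=-\sum_{i\in I} p_i\mu\lambda_i<0$, which is clause (iv). Theorem \ref{thm:HtoStoch} then concludes. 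The hard part is the stationarity-versus-exponential-escape step in (i): turning the pathwise limit $\tfrac{1}{t}\log x_i(t)\to\mu\lambda_i$ into a contradiction with $x_i(t)$ carrying a $t$-invariant probability law on $(0,\infty)$ requires the tightness of the $i$-marginal of $\mu$ at both endpoints, together with the passage from pathwise convergence to the invariance of the finite-time marginals.
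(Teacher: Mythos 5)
Your treatment of part (i) is correct and runs parallel to the paper's: both reduce to the pathwise identity $\tfrac{1}{t}\log x_i(t)\to\mu\lambda_i$ via the martingale decomposition of $\log x_i$, the strong law (from boundedness of $\Gamma_e(\log x_i)=a_{ii}$), and Birkhoff. You close the argument by invoking stationarity of the $i$-marginal of $\mu$ to rule out $\mu\lambda_i\neq 0$, whereas the paper argues that the orbit revisits a compact $K_{m^*}\subset M_+^{(i)}$ of positive $\mu$-measure infinitely often, on which $\log x_i$ is bounded, forcing the rate to vanish. These are two faces of the same recurrence/stationarity obstruction and both are valid.

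Part (ii) contains a genuine gap. You take $V(x)=-\sum_{i\in I}p_i\log x_i$, but Definition \ref{def:H-persistence} requires $V\colon M_+^I\to\mathbb{R}_+$, and your $V$ is unbounded below: as $x_i\to\infty$ for some $i\in I$, $V(x)\to-\infty$. A signed, unbounded-below $V$ cannot serve as a Lyapunov function in Theorem \ref{thm:HtoStoch}, so the four clauses alone do not suffice — the nonnegativity in the preamble of the definition is load-bearing. The paper circumvents this by composing with a smooth truncation: it sets $h(u)=\log(1/u)$, picks $v\colon\mathbb{R}\to\mathbb{R}_+$ of class $C^\infty$ with $v',v''$ bounded and $v(t)=t$ for $t\geq 1$, and defines $V(x)=v\bigl(\sum_{i\in I}p_i h(x_i)\bigr)$. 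This $V$ is nonnegative, still blows up near $M_0^I$, coincides with your choice on $\{\sum_i p_i h(x_i)>1\}$, and its derivatives bring in only the bounded factors $v',v''$, so all your remaining estimates (boundedness of $\Gamma_e^I V$, the bound $|H|\lesssim 1+\sum_{i\in I}|\lambda_i|$, properness of $\sqrt{U}/(1+|H|)$, and $\mu H=-\sum_i p_i\mu\lambda_i<0$ since $v'=1$, $v''=0$ near $M_0^I$) go through unchanged. You should insert this truncation; otherwise the verification of Definition \ref{def:H-persistence} is incomplete.
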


The proof is detailed in the Appendix \ref{appendix-proof-invasion-rate}. It is inspired by the proof of Theorem 5.1 in \cite{Benaim2018_Persistence}. In particular, the introduction of the extended generators and carré du champ makes the construction of $V$ and $H$ easier, in order to prove that the process is $H-$persistent.

\subsection[Convergence a.s. and in T.V.]{Convergence almost sure and in Total variation}\label{section:convergence}

We recall some core definitions that will lead to the convergence almost sure of $(\Pi_t^x)_{t\geq 0}$ and in Total variation of $(P_t(x,\cdot))_{t\geq 0}$. 

\begin{definition}\label{def:accessible-point}
 A point $y\in M$ is \emph{accessible} from $x\in M$ if, for every neighborhood $U$ of $y$, there exists $t\geq 0$ with $P_{t}(x,U)>0$. We denote by $\Gamma_{x}$ the set of points accessible from $x$. For $A\subset M$ set $\displaystyle \Gamma_{A}=\cap_{x\in A}\Gamma_{x}$.
\end{definition} 

Let $G$ be the 1–resolvent kernel associated to $(P_t)_{t\geq 0}$ and defined in (\ref{eq:1-dim-resolvent-kernel}). Accessibility is also recoverable through $G$ (see e.g Section 5.2.1 and Proposition 5.19 in \cite{Benaim2022_Markov}) as $$\Gamma_x = \text{supp}(G(x,\cdot)).$$

Now, we can state the core notions of \emph{(weak) Doeblin point}.
\begin{definition}\label{def:doeblin}
 A point $x^{\ast}\in M $ is a \emph{weak Doeblin point} if there exist a neighborhood $U$ of $x^{\ast}$ and a non‐zero measure $\xi$ on $ M$ such that, for all $x\in U$, the 1–resolvent kernel $G$ satisfies $$G(x,\cdot)\geq \xi(\cdot).$$
\end{definition}

Equivalently, $U$ is a petite set in the sense of Meyn–Tweedie (see e.g. \cite{Meyn2009_Markov}, Section 5.5).

\begin{definition}\label{def:Doeblin}
 A point $x^{\ast}\in M$ is a \emph{Doeblin point} if there exist a neighborhood $U$ of $x^{\ast}$, a non‐zero measure $\xi$ on $ M$ and a time $t_{\ast}>0$ such that
\begin{equation}\label{eq:doeblin}
  P_{t_{\ast}}(x,\cdot)\geq \xi(\cdot),\quad x\in U.
\end{equation}
\end{definition}

 Equivalently, $U$ is a small set in the sense of Meyn–Tweedie (see e.g. \cite{Meyn2009_Markov}, Section 5.2).

\begin{remark}
 If $x^*$ is an accessible Doeblin point, the minorization condition (\ref{eq:doeblin}) extends to every compact space (see e.g. Lemma 4.9 in \cite{Benaim2018_Persistence}).
\end{remark}

Using the Stratonovich formalism, (\ref{eq:kolmogorov}) writes as
\begin{equation}\label{eq:kolmogorov-stratonovich}
 \mathrm dx_t = S^0(x_t)\,\mathrm dt + \sum_{j=1}^{m} S^j(x_t)\circ dB^j_t
\end{equation}
where, for all $j = 1,\dots,m$ and $i = 1,\dots,n$,
$$ S_{i}^j(x) = x_i \,\Sigma_i^{\,j}(x), \quad\text{and}\quad S^0_{i}(x) = x_i F_i(x) - \frac12 \sum_{j=1}^{m}\sum_{k=1}^{n} \frac{\partial S_i^{j}(x)}{\partial x_k}\,S^{j}_k(x).$$

Let's associate to (\ref{eq:kolmogorov-stratonovich}) the following \emph{deterministic control system},
\begin{equation}\label{eq:deterministic-control-system}
 \dot y(t) = S^0\bigl(y(t)\bigr) + \sum_{j=1}^{m} u_j(t)\,S^j\bigl(y(t)\bigr)
\end{equation}
where the control function $u=(u_1,\dots,u_m):\mathbb{R}_+\to\mathbb{R}^m$ can be chosen to be piecewise continuous. Given such a control function, we let $y(u,x,\cdot)$ denote the maximal solution to \eqref{eq:deterministic-control-system} starting at $x$ in the sense that $y(u,x,0)=x$ (without any assumption about global integrability of vector fields).\\

The following proposition easily follows from the well-known Stroock and Varadhan support theorem in \cite{Stroock1972_Support} (see also Theorem 8.1, Chapter VI in \cite{Ikeda1981_Stochastic}).

\begin{proposition}\label{prop:accessibility-through-control-system}
 Let $x\in M$. A point $p\in M$ lies in $\Gamma_x$ if and only if for every neighborhood $O$ of $p$ there exists a control $u$ such that $y(u,x,\cdot)$ meets $O$ (i.e.\ $y(u,x,t)\in O$ for some $t\geq 0$).
\end{proposition}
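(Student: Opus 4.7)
The approach is to apply the Stroock--Varadhan support theorem on path space and translate statements about positive probability of open sets into statements about reachability by the control system. The key observation is that accessibility $p\in\Gamma_x$ is captured by the cylinder events $\mathcal{C}_{t,O}=\{\omega\in C([0,t], M):\omega(t)\in O\}$, which are open in the uniform topology because evaluation at time $t$ is continuous and $O$ is open.

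First I would recall the Stroock--Varadhan support theorem in the present setting: since the coefficients of \eqref{eq:kolmogorov-stratonovich} are locally Lipschitz and Proposition \ref{prop:hyp-LU-implies} ensures non-explosion, the topological support of the law of $(X_s^x)_{s\in[0,t]}$ on $C([0,t], M)$ equipped with the uniform topology equals the closure of $\{y(u,x,\cdot)|_{[0,t]}: u\text{ smooth}\}$. Together with the classical fact that for any piecewise continuous control $u$, the solution $y(u,x,\cdot)$ of \eqref{eq:deterministic-control-system} can be approximated uniformly on compacts by $y(u_n,x,\cdot)$ for smooth $u_n$, the support is also the closure over piecewise continuous controls.

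For the ``if'' direction, assume that for every neighborhood $O$ of $p$ there is a control $u$ and a time $t\geq 0$ with $y(u,x,t)\in O$. Then $y(u,x,\cdot)|_{[0,t]}$ belongs to the open cylinder $\mathcal{C}_{t,O}$, and by the preceding step this path lies in the support of the law of $(X_s^x)_{s\in[0,t]}$ (possibly as a limit of smooth-control paths, in which case some approximant already lies in $\mathcal{C}_{t,O}$ by openness). Hence $P_t(x,O)=\mathbb{P}(X_t^x\in O)>0$, so $p\in\Gamma_x$. Conversely, for the ``only if'' direction, if $p\in\Gamma_x$, then for any neighborhood $O$ of $p$ there is $t\geq 0$ with $P_t(x,O)>0$, meaning $\mathcal{C}_{t,O}$ has positive mass under the law of the diffusion. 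Hence the support meets $\mathcal{C}_{t,O}$, and by Stroock--Varadhan this produces a (smooth) control $u$ with $y(u,x,t)\in O$.

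The main obstacle is purely a matter of bookkeeping: the original Stroock--Varadhan theorem is stated for smooth controls, while the proposition allows piecewise continuous ones, so I would need to verify that passing to this wider class does not change the reachability closure. This reduces to the standard continuous dependence of ODE solutions on the control, together with density of smooth controls in the piecewise continuous ones in the $L^1_{\mathrm{loc}}$ topology, restricted to any compact time interval on which the maximal solution exists. Once this is in hand, both implications are immediate from the characterization of support via open sets of positive measure.
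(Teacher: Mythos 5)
Your proposal is correct and takes essentially the same route as the paper: the paper does not spell out a proof but simply asserts that the proposition ``easily follows'' from the Stroock--Varadhan support theorem (citing \cite{Stroock1972_Support} and \cite{Ikeda1981_Stochastic}), which is exactly the argument you give — openness of the evaluation cylinders $\mathcal{C}_{t,O}$ together with the characterization of the topological support as the closure of control paths, plus the standard density of smooth controls among piecewise continuous ones.
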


Recall that the \emph{Lie bracket} of two smooth vector fields $Y,Z:\mathbb{R}^{n}\to\mathbb{R}^{n}$ is defined by $$[Y,Z](x)=DZ(x)Y(x)-DY(x)Z(x).$$

Given a family $\mathcal{X}$ of smooth vector fields on $\mathbb{R}^{n}$, let $[\mathcal{X}]_{k}$, $k\in\mathbb{N}$, and $[\mathcal{X}]$ be defined by $$[\mathcal{X}]_{0}=\mathcal{X},\quad [\mathcal{X}]_{k+1}=[\mathcal{X}]_{k}\,\cup\bigl\{[Y,Z]:Y,Z\in[\mathcal{X}]_{k}\bigr\},\quad [\mathcal{X}]=\cup_{k}[\mathcal{X}]_{k},$$
and set $[\mathcal{X}](x)=\{Y(x):Y\in[\mathcal{X}]\}$.\\

\begin{definition}\label{def:strong-hormander-condition}
 In the context of (\ref{eq:kolmogorov}) (equivalently (\ref{eq:kolmogorov-stratonovich})), we say that $x^{\ast}\in M$ satisfies the \emph{Hörmander condition} (respectively the \emph{strong Hörmander condition}) if $$\bigl[ \{S^{0},\dots,S^{m}\}\bigr](x^{\ast}),$$ (respectively $\{S^{1}(x^{\ast}),\dots,S^{m}(x^{\ast})\}\cup\{[Y,Z](x^{\ast}):Y,Z\in[\{S^{0},\dots,S^{m}\}]\}$ spans $\mathbb{R}^{n}$.
\end{definition}

The (respectively strong) Hörmander condition will lead to the uniqueness of the invariant measure and the almost sure convergence of $(\Pi_t^x)_{t\geq 0}$ (respectively the convergence in Total variation of $(P_t(x,\cdot))_{t\geq 0}$) towards it.

\begin{corollary}[\cite{Benaim2018_Persistence}, Corollary 5.4]\label{cor:convergence-as-tv-hormander}
 We assume that $\{(X_t^x)_{t\geq 0}:x\in M_+\}$ is $H-$persistent. If there exists $x^{\ast}\in\Gamma_{M_+}\cap M_+$ satisfying the Hörmander condition, then:
 \begin{enumerate}[label=(\textbf{\roman*})]
  \item $\mathcal{P}_{\mathrm{inv}}(M_+)=\{\Pi\}$, $\Pi\ll\lambda,$ and $\Pi^{x}_{t}\Rightarrow\Pi$, $\mathbb{P}$–a.s., for all $x\in M_+$. 
  \item For all $f\in L^{1}(\Pi)$ such that $\int_0^T f(X_s^x)ds<\infty$ for all $T>0$, $\lim_{t\to\infty}\Pi^{x}_{t}f=\Pi f$, $\mathbb{P}$–a.s., for all $x\in M_+$. 
  \item If, in addition, $x^{\ast}$ satisfies the \textit{strong} Hörmander condition, then $(P_t(x,\cdot))_{t\geq 0}$ converges to $\Pi$ in Total variation, for all $x\in M_+$.
 \end{enumerate}
\end{corollary}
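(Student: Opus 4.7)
The plan is to convert the Hörmander conditions at the accessible point $x^*$ into (weak or strong) Doeblin minorizations, and then combine these with the consequences of $H$-persistence already established.

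First, I would show that $x^*$ is a weak Doeblin point. Hörmander's theorem applied to the generator $L$ in a neighborhood of $x^*$ gives hypoellipticity: for every $t>0$ and $x$ near $x^*$, the law $P_t(x,\cdot)$ admits a smooth density $p_t(x,y)$ with respect to Lebesgue measure. A control-theoretic argument based on Proposition \ref{prop:accessibility-through-control-system} (using the fact that $x^* \in \Gamma_{M_+}$ in particular yields a control driving mass from $x^*$ back into an arbitrary neighborhood of $x^*$) yields that $p_t$ is strictly positive at some pair $(x^*,y^*)$ for some $t>0$. By continuity of the density, $p_t(x,y)\geq c>0$ on a product neighborhood, and integrating against $e^{-t}\,dt$ in (\ref{eq:1-dim-resolvent-kernel}) produces a non-zero measure $\xi$ with $G(x,\cdot)\geq \xi$ for all $x$ in a neighborhood of $x^*$, i.e., the weak Doeblin property of Definition \ref{def:doeblin}.

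Second, uniqueness of the invariant probability measure on $M_+$ follows. Since $x^*\in\Gamma_{M_+}$, $x^*$ is accessible from every $x\in M_+$, so the support of any $\mu\in\mathcal{P}_{\mathrm{inv}}(M_+)$ contains $x^*$. A standard accessibility-minorization argument (cf.\ Section 4 of \cite{Benaim2018_Persistence} or Chapter 5 of \cite{Benaim2022_Markov}) then forces any two elements of $\mathcal{P}_{\mathrm{inv}}(M_+)$ to coincide. Combined with $H$-persistence, Corollary \ref{cor:persistent-measure} yields $\mathcal{P}_{\mathrm{inv}}(M_+)=\{\Pi\}$ and $\Pi_t^x\Rightarrow\Pi$ almost surely. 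The absolute continuity $\Pi\ll\lambda$ is then a direct consequence of $\Pi=\Pi P_t$ together with the smoothness of $P_t(x,\cdot)$ near $x^*$. For (ii), once $\Pi_t^x\Rightarrow\Pi$ almost surely, the extension from $C_b(M)$ to $f\in L^1(\Pi)$ proceeds by a standard truncation: approximate $f$ by bounded continuous functions in $L^1(\Pi)$ and control the tail using the uniform bound on $\Pi_t^x(\sqrt{U})$ from Proposition \ref{prop:conditions-hypotheses-hold}\textit{\textbf{(iii)}}.

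For (iii), the strong Hörmander condition upgrades the minorization: it ensures that $P_{t^*}(x^*,\cdot)$ itself (not only the resolvent $G$) admits a density bounded below on a neighborhood of $y^*$, so that $x^*$ becomes a genuine Doeblin point in the sense of Definition \ref{def:Doeblin}. Accessibility of $x^*$ extends this minorization to every compact set by a chaining argument (as in the remark following Definition \ref{def:Doeblin}), turning compacts into small sets. Combined with the Foster--Lyapunov drift $LU\leq -aU+b$ from Hypothesis \ref{hyp:main-hyp}, the Harris/Meyn--Tweedie convergence theorem gives $\|P_t(x,\cdot)-\Pi\|_{\mathrm{TV}}\to 0$ for every $x\in M_+$.

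The main obstacle I expect is the first step: extracting a quantitative minorization of $G(x,\cdot)$ (or of $P_{t^*}(x,\cdot)$ under the strong condition) from the Hörmander condition plus accessibility. Hörmander's theorem only gives smoothness of the density, and one must then use a version of the Stroock--Varadhan support theorem together with continuity of the density to deduce both positivity at a point and uniformity on a neighborhood. Once this is in place, the rest of the argument is a relatively clean combination of $H$-persistence (for tightness and convergence of the empirical measures) with classical Harris theory (for the total-variation convergence).
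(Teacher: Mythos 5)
The paper does not give its own proof of this statement; it is quoted verbatim from \cite{Benaim2018_Persistence}, Corollary 5.4. Your overall strategy --- convert the Hörmander condition at the accessible point into a Doeblin-type minorization, combine with $H$-persistence and Corollary \ref{cor:persistent-measure} to get uniqueness and almost sure convergence, and then upgrade to total variation via Harris/Meyn--Tweedie theory using Hypothesis \ref{hyp:main-hyp} --- is indeed the right skeleton. But there is a genuine technical error in your first step, and it is precisely the error that erases the distinction between the two parts of Definition \ref{def:strong-hormander-condition}.

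You assert that "Hörmander's theorem applied to the generator $L$ \dots gives \dots for every $t>0$ and $x$ near $x^*$, the law $P_t(x,\cdot)$ admits a smooth density." Under the (weak) Hörmander condition --- the Lie algebra generated by $\{S^0,\dots,S^m\}$ including $S^0$ itself spans $\mathbb R^n$ at $x^*$ --- this is false. That condition makes the stationary operator $L$ (hence $1-L$) hypoelliptic near $x^*$, which yields a smooth density for the resolvent kernel $G(x,\cdot)$, not for the fixed-time kernel $P_t(x,\cdot)$. It is the \emph{strong} Hörmander condition, where $S^0(x^*)$ is only allowed to contribute through brackets, that makes the parabolic operator $\partial_t-L$ hypoelliptic and gives a smooth density for $P_t(x,\cdot)$ at fixed $t>0$. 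This is exactly why the weak condition yields only a weak Doeblin point in the sense of Definition \ref{def:doeblin} (a minorization of $G$), enough for (i) and (ii), whereas the strong condition yields a genuine Doeblin point in the sense of Definition \ref{def:Doeblin}, which is what drives the Harris-type total variation convergence in (iii). As written, your argument would "prove" (iii) under the weak Hörmander condition alone, which is not true. Relatedly, the claim that $\Pi\ll\lambda$ "follows from $\Pi=\Pi P_t$ and smoothness of $P_t(x,\cdot)$" has the same defect; one should instead work with $\Pi G=\Pi$ and the density of $G$. Finally, the control-theoretic step is more delicate than you state: the Stroock--Varadhan support theorem (Proposition \ref{prop:accessibility-through-control-system}) describes the support of the measure, not positivity of its density; in the correct argument one uses that $G(x^*,\cdot)$ is a probability measure with smooth density, hence the density is positive somewhere, and joint lower semicontinuity then furnishes the minorization on a product neighborhood.
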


As usual, we call the diffusion (\ref{eq:kolmogorov-stratonovich}) \emph{elliptic} at $x$ if $S^1(x),\cdots, S^m(x)$ span $\mathbb R^n$.

\begin{remark}\label{rem:elliptic-case}
Assume that the diffusion (\ref{eq:kolmogorov-stratonovich}) is elliptic at every $x\in M_+$, then the strong Hörmander condition holds on $M_+$. Moreover, by the classical Chow's theorem, every $x\in M_+$ is accessible from $M_+$ (see e.g. \cite{Benaim2022_Markov}, Proposition 6.33).
\end{remark}

\section[Rosenzweig-MacArthur model]{Stochastic persistence and extinction in practice}\label{section:in-practice}

This section details how to study persistence and extinction of two models: we start by the one-dimensional logistic SDE as a basis model that will be used later to study the persistence and extinction of the Rosenzweig-MacArthur model (\ref{eq:rosenzweig-MacArthur-model-in-details}).

\subsection[One-dimensional logistic SDE]{One-dimensional logistic model}

Consider the SDE defined on $M=\mathbb R_+$ as 
\begin{equation}\label{eq:1-dim-logistic-sde}
    \mathrm dz_t = z_t\left[\left(1-\frac{z_t}{\kappa}\right)\mathrm dt + \varepsilon \mathrm dB_t\right], \quad z_0\in\mathbb R_+,
\end{equation}

where $\kappa$, $\varepsilon>0$ and $(B_t)_{t\geq 0}$ is a standard Brownian motion on $\mathbb R$. This model is the \emph{one-dimensional logistic SDE}. Let $M_0=\{0\}$ and $M_+=\mathbb R_+^*$.

Remark that $\forall z\in M$ and $t\geq 0$, the solution to (\ref{eq:1-dim-logistic-sde}) is known explicitly (see e.g. Example 1 in \cite{Benaim2018_Persistence}) as 
\begin{equation}\label{eq:solution-one-dimensional-logistic-sde}
    z_t^z=\frac{ze^{\left(1-\frac{\varepsilon^2}{2}\right)t+\varepsilon B_t}}{1+\frac{z}{\kappa}\int_0^t e^{\left(1-\frac{\varepsilon^2}{2}\right)s+\varepsilon B_s}ds}.
\end{equation}

\begin{proposition}\label{prop:1-dim-logistic-results}
\begin{enumerate}[label=(\textbf{\roman*})]
    \item If $\varepsilon^2>2$, for all $z\in M$, the process $(z_t^z)_{t\geq 0}$ converges almost surely to $0$ and exponentially fast, in particular $$\limsup_{t\to\infty} \frac{1}{t}\log(z_t^z)\leq 1-\frac{\varepsilon^2}{2}<0.$$
    \item If $0<\varepsilon^2<2$, $\{(z_t^z)_{t\geq 0}:z\in M_+\}$ is $H-$persistent, elliptic on $M_+$, and all conclusions of the Corollary \ref{cor:convergence-as-tv-hormander} apply with $\Pi=\gamma_{\varepsilon, \kappa}$ defined as $$\gamma_{\varepsilon, \kappa}(z)\mathrm dz:=\frac{z^{k - 1} \exp\left(-\frac{z}{\theta}\right)}{\Gamma(k)  \theta^k} \mathrm dz.$$
\end{enumerate}
\end{proposition}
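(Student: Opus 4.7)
Part (i) requires very little work thanks to the explicit representation (\ref{eq:solution-one-dimensional-logistic-sde}). Taking logarithms and dropping the (nonnegative) logarithmic-of-denominator term, I would write $\log z_t^z \leq \log z + (1 - \varepsilon^2/2) t + \varepsilon B_t$. Dividing by $t$ and invoking the strong law of large numbers for Brownian motion ($B_t/t \to 0$ almost surely) gives the announced bound on $\limsup t^{-1}\log z_t^z$; since $1 - \varepsilon^2/2 < 0$, this yields almost sure exponential convergence of $z_t^z$ to $0$.

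For part (ii), my plan is to feed a polynomial Lyapunov function into Theorem \ref{thm:invasion-criterion} and then invoke Corollary \ref{cor:convergence-as-tv-hormander}. First I would take $U(z) = 1 + z^{2p}$ for some integer $p \geq 2$. A routine computation of $LU$ shows that the dominant term at infinity is $-(2p/\kappa)\,z^{2p+1}$, so $LU \leq -aU + b$ holds for suitable $a, b > 0$ and Hypothesis \ref{hyp:main-hyp} is verified. Moreover $\Gamma_e(\sqrt{U})(z) = \varepsilon^2 p^2 z^{4p}/(1+z^{2p})$ is bounded by a constant multiple of $U(z)$, and Corollary \ref{cor:hyp-3-implies} then shows that $\sqrt{U}$ satisfies the strong law.

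To apply the Hofbauer criterion, I would compute the invasion rate $\lambda(z) = 1 - z/\kappa - \varepsilon^2/2$. The extinction set $M_0 = \{0\}$ supports only $\delta_0$, so $\mathcal{P}_{\mathrm{erg}}(M_0) = \{\delta_0\}$ and $\delta_0\lambda = 1 - \varepsilon^2/2 > 0$ under the assumption $\varepsilon^2 < 2$; the growth condition (\ref{eq:growth}) is immediate since $\sqrt{U}(z) \sim z^p$ dominates $|F(z)| \sim z/\kappa$ at infinity. Theorem \ref{thm:invasion-criterion} therefore delivers $H$-persistence. Ellipticity on $M_+$ is obvious: $S^1(z) = \varepsilon z$ spans $\mathbb{R}$ for $z > 0$. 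By Remark \ref{rem:elliptic-case}, the strong Hörmander condition holds on $M_+$ and every $x \in M_+$ is accessible from $M_+$, so Corollary \ref{cor:convergence-as-tv-hormander} yields existence and uniqueness of an invariant $\Pi$ on $M_+$, together with the associated almost sure and Total variation convergence statements.

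The identification $\Pi = \gamma_{\varepsilon,\kappa}$ is a direct Fokker--Planck calculation: the stationary density $p$ on $M_+$ satisfies $\tfrac{1}{2}(\varepsilon^2 z^2 p(z))' = z(1-z/\kappa)\,p(z)$ (after one integration and the zero-flux boundary condition), i.e.\ $p'(z)/p(z) = (k-1)/z - 1/\theta$ with $k = 2/\varepsilon^2 - 1$ and $\theta = \varepsilon^2 \kappa/2$, whose unique normalized solution on $\mathbb{R}_+^*$ is the Gamma density $\gamma_{\varepsilon,\kappa}$. The main subtlety is not any single step but rather the necessity of choosing a \emph{polynomial} rather than exponential Lyapunov function: the natural candidate $U(z) = e^{\theta z}$ — which would be desirable for exponential moments of $\Pi$ — fails the carré du champ bound $\Gamma_e(\sqrt{U}) \leq \tilde{a} U + \tilde{b}$ because of the extra $z^2$ factor in $\Gamma_e$, and one must therefore settle for $U$ of polynomial growth. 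The exponential tail of $\Pi$ is then recovered a posteriori from the explicit Gamma formula rather than directly from the Lyapunov bound.
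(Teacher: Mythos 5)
Your proof of part \textbf{(i)} is the same as the paper's. For part \textbf{(ii)}, you take a genuinely different route, and it is correct, but one of your closing remarks misrepresents what is possible with the exponential Lyapunov function and is worth addressing.

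You use the polynomial Lyapunov function $U(z)=1+z^{2p}$ and invoke the general Hofbauer-type criterion (Theorem \ref{thm:invasion-criterion}) to obtain $H$-persistence. The paper instead works with $U_{\theta}(z)=e^{\theta z}$ for $\theta<\theta^*=\tfrac{2}{\varepsilon^2\kappa}$ and verifies $H$-persistence directly from Definition \ref{def:H-persistence}, constructing by hand a smooth $V$ equal to $-\log z$ near the origin. Both routes are legitimate; yours is shorter because Theorem \ref{thm:invasion-criterion} packages the $V$-construction for you. Your Fokker--Planck identification of $\Pi=\gamma_{\varepsilon,\kappa}$ is also fine (the paper verifies $L^*\gamma_{\varepsilon,\kappa}=0$ directly, which is the same computation in reverse).

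However, your concluding observation --- that $U(z)=e^{\theta z}$ ``fails the carré du champ bound $\Gamma_e(\sqrt{U})\leq\tilde a U+\tilde b$ because of the extra $z^2$ factor, and one must therefore settle for $U$ of polynomial growth'' --- is not correct. Corollary \ref{cor:hyp-3-implies} does not require the dominating function on the right-hand side to be the \emph{same} $U$ whose square root appears on the left; it only requires \emph{some} function satisfying Hypothesis \ref{hyp:main-hyp}. The paper exploits exactly this: for any $\theta<\theta^*$ pick $\delta>0$ with $\tilde\theta:=\theta+\delta<\theta^*$, and then
\[
\Gamma\bigl(U_{\theta}^{1/2}\bigr)(z)=\frac{\varepsilon^2\theta^2}{4}\,z^2 e^{\theta z}
=\frac{\varepsilon^2\theta^2}{4}\,z^2 e^{-\delta z}\,U_{\tilde\theta}(z)\leq \mathrm{const}\cdot U_{\tilde\theta}(z),
\]
since $z^2 e^{-\delta z}$ is bounded. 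Because $U_{\tilde\theta}$ still satisfies Hypothesis \ref{hyp:main-hyp}, Corollary \ref{cor:hyp-3-implies} applies and $U_{\theta}^{1/2}$ obeys the strong law. This is more than cosmetic: using the exponential Lyapunov function one reads off the exponential moment $\Pi(e^{\theta z})<\infty$ directly from Proposition \ref{prop:conditions-hypotheses-hold}\textbf{(ii)} without any appeal to the explicit Gamma formula --- which is precisely what is needed later for the two-dimensional model (Theorem \ref{thm:rosenzweig-MacArthur-model-persistence}\textbf{(ii)}), where no closed-form density is available. Your polynomial $U$ is sufficient for the present proposition, but the exponential choice is not a dead end and is in fact what the paper carries through the rest of the analysis.
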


\begin{proof}
 \textit{\textbf{(i)}} Since $1-\frac{\varepsilon^2}{2}<0$, remark that $$ze^{\left(1-\frac{\varepsilon^2}{2}\right)t+\varepsilon B_t}=z e^{t\left(1-\frac{\varepsilon^2}{2}+\frac{\varepsilon}{t}B_t\right)}\underset{t\to\infty}{\longrightarrow} 0, \quad \text{almost surely},$$ since $\lim_{t\to\infty}\frac{B_t}{t}=0$. In view of (\ref{eq:solution-one-dimensional-logistic-sde}), $z_t^z\underset{t\to\infty}{\longrightarrow}0$ and 
\begin{equation}\label{eq:lim-sup-bound-logistic-sde}
    \limsup_{t\to\infty} \frac{1}{t}\log(z_t^z)\leq \limsup_{t\to\infty}  1-\frac{\varepsilon^2}{2}+\varepsilon\frac{B_t}{t} =1-\frac{\varepsilon^2}{2}.
\end{equation}

\textit{\textbf{(ii)}} Now, let $0<\varepsilon^2<2$ and $U_{\theta}(z)=e^{\theta z}$ where $\theta < \theta^{*}:=\frac{2}{\varepsilon^2\kappa}$. Then, $U>1$ is a proper map and $$LU_{\theta}(z)=\theta z e^{\theta z}\left(1-z\left(\frac{1}{\kappa}-\frac{\varepsilon^2}{2}\theta\right)\right),$$ where $\frac{1}{\kappa}-\frac{\varepsilon^2}{2}\theta>0$ since $\theta <\theta^*$. Then, $\forall z> \frac{4\kappa}{2-\varepsilon^2\theta\kappa}$, $LU(z)<-\frac{4\theta\kappa}{2-\varepsilon^2\theta\kappa} U_{\theta}(z)$ while $LU(z)<\mathrm{cst}$ if $0\leq z\leq \frac{2\kappa}{2-\varepsilon^2\theta\kappa}$ so Hypothesis \ref{hyp:main-hyp} is verified for all $\theta < \theta^*$.

Since $\Gamma U_{\theta}^{\frac{1}{2}}(z)=\frac{\varepsilon^2\theta^2}{4}z^2e^{\theta z}$, we need a stronger control on $\theta$ to ensure it satisfies the strong law by Corollary \ref{cor:hyp-3-implies}. For any $\theta <\theta^*$, remark that there exists $\delta >0$ small enough such that $\tilde \theta := \theta +\delta<\theta^*$ and $$\Gamma U_{\theta}^{\frac{1}{2}}(z) = \frac{\varepsilon^2 \theta^2}{4}z^2e^{-\delta z}e^{\tilde \theta z}\leq \frac{\varepsilon^2 \theta^2e^{-2}}{\delta ^2}U_{\tilde \theta}(z), \quad \forall z>\frac{2}{\delta}$$ while it is bounded on $z\in\left[0,\frac{2}{\delta}\right]$ so $U_{ \theta}^{\frac{1}{2}}$ satisfies the strong law.

Let $V$ be a smooth function such that $V(z)=-\log(z)$ for $z\in\left]0,\frac{1}{2}\right[$ and $V(z)=0$ for $z\geq1$. Then, $V\in\mathcal{D}_e^{2,+}$ and $$H|_{M_+}(z):= LV(z)=-\left(1-\frac{z}{\kappa}\right)+\frac{\varepsilon^2}{2}, \quad z\in\left]0,\frac{1}{2}\right[,$$ which extends continuously to $M_0$ as $H(0)=-1+\frac{\varepsilon^2}{2}$. 

In particular, $\Gamma (V)(z)$ is bounded so that $V$ satisfies the strong law by Corollary \ref{cor:hyp-3-implies} and $\frac{\sqrt{U}}{1+|H|}$ is proper since $H$ is bounded. The only ergodic probability measure on $M_0$ is $\delta_{0},$ so $$\delta_0 H=H(0)=-1+\frac{\varepsilon^2}{2}<0,$$ which follows from the assumption $0<\varepsilon^2<2$ and it proves that $\{(X_t^x)_{t\geq 0}:x\in M_+\}$ is $H-$persistence.

By ellipticity of (\ref{eq:1-dim-logistic-sde}) on $M_+$ and Remark \ref{rem:elliptic-case}, Corollary \ref{cor:convergence-as-tv-hormander} is applicable and the unique invariant probability measure on $M_+$ is known as $$\gamma_{\varepsilon, \kappa}(z)\mathrm dz=\frac{z^{k - 1} \exp\left(-\frac{z}{\theta}\right)}{\Gamma(k)  \theta^k} \mathrm dz,$$ where $k=\frac{2}{\varepsilon^2} - 1$, $\theta=\frac{\varepsilon^2\kappa}{2}$. One can verify that $$\mathcal L^*\gamma_{\varepsilon, \kappa}(z)=-\frac{\partial}{\partial z}\left[z\left(1-\frac{z}{\kappa}\right)\gamma_{\varepsilon, \kappa}(z)\right]+\frac{\varepsilon^2}{2}\frac{\partial^2}{\partial z^2}\left[z^2\gamma_{\varepsilon, \kappa}(z)\right]=0,$$ since $\frac{\partial}{\partial z}\left[z\left(1-\frac{z}{\kappa}\right)\gamma_{\varepsilon, \kappa}(z)\right]=\gamma_{\varepsilon,\kappa}(z)\left[ k-\frac{2z}{\theta}+\frac{z^2}{2\theta^2} \right]$ and $\frac{\partial^2}{\partial z^2}\left[z^2\gamma_{\varepsilon, \kappa}(z)\right]=\gamma_{\varepsilon,\kappa}(z)\left[ \frac{2}{\varepsilon^2} \left(k -\frac{2z}{\theta}+\frac{z^2}{2\theta^2}\right) \right].$
\end{proof}

In the context of (\ref{eq:rosenzweig-MacArthur-model-in-details}), we achieve Hypothesis \ref{hyp:main-hyp} with the same type of exponential Lyapunov control.

\begin{proposition}\label{prop:main-hypotheses-hold-rosenzweig-MacArthur-model}
 Hypothesis \ref{hyp:main-hyp} is verified with $U(x_1,x_2)=e^{\theta(x_1+x_2)},$ $\forall \theta < \theta^*:=\frac{2}{\kappa\varepsilon^2}$
\end{proposition}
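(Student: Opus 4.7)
The plan is to verify Hypothesis~\ref{hyp:main-hyp} by direct computation with the proposed exponential Lyapunov function. Since the SDE~\eqref{eq:rosenzweig-MacArthur-model-in-details} carries noise only on the first coordinate, the only non-trivial entry of the matrix $a$ is $a_{11}(x) = \varepsilon^2$. Applying \eqref{eq:generator-kolmogorov-general} to $U(x_1,x_2) = e^{\theta(x_1+x_2)}$, every partial derivative of $U$ equals $U$ multiplied by an appropriate power of $\theta$, so I obtain
\[
LU(x) = \theta\, U(x)\Bigl[\,x_1 F_1(x) + x_2 F_2(x) + \tfrac{\varepsilon^2\theta}{2}\,x_1^{\,2}\,\Bigr].
\]

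The key observation is that the cross terms $\pm \frac{x_1 x_2}{1+x_1}$ arising from $F_1$ and $F_2$ cancel each other, which is a structural feature of the Rosenzweig--MacArthur interaction. Expanding and simplifying, I find
\[
LU(x) = \theta\, U(x)\Bigl[\,x_1 - c\, x_1^{\,2} - \alpha\, x_2\,\Bigr], \qquad c := \frac{1}{\kappa} - \frac{\varepsilon^2 \theta}{2},
\]
where $c > 0$ precisely by the hypothesis $\theta < \theta^{*} = 2/(\kappa \varepsilon^2)$.

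It then suffices to show that the bracketed polynomial $g(x) := x_1 - c x_1^{\,2} - \alpha x_2$ tends to $-\infty$ as $\|x\|\to\infty$ in $M = \mathbb{R}_+^2$. Indeed, when $x_1 \to \infty$, the $-c x_1^{\,2}$ term dominates; when $x_1$ stays bounded while $x_2 \to \infty$, the $-\alpha x_2$ term drives $g$ to $-\infty$. Consequently, for any $a>0$, one can choose $R>0$ large enough that $\theta\, g(x) \leq -a$ whenever $\|x\|\geq R$, giving $LU(x)\leq -a\, U(x)$ on that region. On the compact set $\{x\in M : \|x\|\leq R\}$, continuity of $LU + aU$ yields a uniform bound $b\geq 0$, so Hypothesis~\ref{hyp:main-hyp} follows globally. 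Finally $U\geq 1$ since $\theta(x_1+x_2)\geq 0$, and $U$ is clearly proper on $M$, which completes the verification.

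The main (and essentially only) subtlety is spotting the cancellation of the predator--prey interaction terms; after that, the argument is a routine one-dimensional-style Lyapunov estimate, in which the quadratic penalty $-c x_1^{\,2}$ (which is positive exactly because $\theta < \theta^*$) must dominate both the linear growth contribution $x_1$ coming from $F_1$ and the diffusion contribution $\frac{\varepsilon^2\theta}{2} x_1^{\,2}$ coming from the Itô correction.
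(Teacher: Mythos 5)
Your proof is correct, and it follows the same direct Lyapunov computation as the paper: apply $L$ to $U(x)=e^{\theta(x_1+x_2)}$, observe the cancellation of the interaction terms $\pm\frac{x_1x_2}{1+x_1}$, and use $\theta<\theta^\ast$ so that $c=\tfrac{1}{\kappa}-\tfrac{\varepsilon^2\theta}{2}>0$.

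One point worth flagging in your favor: you retain the $-\alpha x_2$ term in the bracket $g(x)=x_1-cx_1^2-\alpha x_2$, and this is essential for the argument to close. The paper's displayed inequality discards $-\alpha x_2$ and bounds $LU\leq \theta x_1 e^{\theta(x_1+x_2)}\bigl(1-cx_1\bigr)$; but in the regime where $x_1$ stays bounded (say $x_1\in[0,1/c]$) and $x_2\to\infty$ this right-hand side is at most $\tfrac{\theta}{4c}U$, which is positive and \emph{not} dominated by $-aU+b$. The cancellation of cross terms by itself is therefore not enough; the linear decay $-\alpha x_2$ is what controls $LU$ on the ``thin strip'' $\{x_1 \text{ bounded}, x_2 \text{ large}\}$, and your argument exploits it explicitly (split $\|x\|\to\infty$ into $x_1\to\infty$ and $x_1$ bounded with $x_2\to\infty$). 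So your write-up is the one that genuinely establishes $LU\leq -aU+b$.

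Minor remark: it is slightly cleaner to observe $g(x)\leq\tfrac{1}{4c}-\alpha x_2$ together with $g(x)\to-\infty$ as $x_1\to\infty$, rather than phrasing it as ``$g$ tends to $-\infty$''; but the content is the same and your case split already captures it.
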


\begin{proof}

Remark that 
\begin{align*}LU(x_1,x_2)&=x_1\left(1-\frac{x_1}{\kappa}-\frac{x_2}{1+x_1}\right)\theta e^{\theta (x_1+x_2)}+x_2\left(-\alpha+\frac{x_1}{1+x_1}\right)\theta e^{\theta (x_1+x_2)} +\frac{\varepsilon^2}{2}\theta^2x_1^2 e^{\theta (x_1+x_2)}\\
&\leq \theta x_1 e^{\theta (x_1+x_2)}\left(\left(1-\frac{x_1}{\kappa}\right) +\frac{\varepsilon^2}{2}\theta x_1\right),\end{align*}

so that $LU\leq -aU+b$ holds for any $\theta <\theta^*$ by analogy with the one-dimensional logistic SDE.
\end{proof}

\subsection[Rosenzweig-MacArthur model]{Extinction for degenerate Rosenzweig-MacArthur model}\label{section:extinction-proof}

Remark that $$\mathrm dx_1=x_1\left[\left(1-\frac{x_1(t)}{\kappa}-\frac{x_2(t)}{1+x_1(t)}\right)\mathrm dt + \varepsilon \mathrm d B_t\right] \leq x_1\left[\left(1-\frac{x_1(t)}{\kappa}\right)\mathrm dt + \varepsilon \mathrm d B_t\right],$$ so that by a comparison theorem (see e.g. \cite{ojm/1200770674}, Theorem 1.1), $x_1(t)\leq z_t$ where $(z_t)_{t\geq 0}$ is solution to the one-dimensional logistic SDE (\ref{eq:1-dim-logistic-sde}) starting at $z_0=x_1(0)$. On the other hand, 
\begin{equation}\label{eq:x-2-solution}
    x_2(t)=x_2(0)\exp\left(-\alpha t + \int_0^t \frac{x_1(s)}{1+x_1(s)}ds\right).
\end{equation}

We show the general extinction of both species in (\ref{eq:rosenzweig-MacArthur-model-in-details}) under the assumption $\varepsilon^2>2$.

\begin{proof}[\textbf{Proof of Theorem \ref{thm:rosenzweig-MacArthur-model-extinction-2}}]

Remark that $z_t\underset{t\to\infty}{\longrightarrow}0$ since $\varepsilon^2>2$, so that $x_1(t)\underset{t\to\infty}{\longrightarrow}0$ almost surely by the comparison remark above and $$\limsup_{t\to\infty}\frac{1}{t}\log(x_2(t))\leq -\alpha + \limsup_{t\to\infty}\frac{1}{t}\int_{0}^t \frac{x_1(s)}{1+x_1(s)}ds=-\alpha.$$

It concludes to prove $(X_t^x)\underset{t\to\infty}{\longrightarrow} (0,0)$ $\mathbb P-$almost surely, $\forall x\in  M_+$. The $\limsup$ bounds are direct using above inequality and the logistic one (\ref{eq:lim-sup-bound-logistic-sde}) for $x_1$.
\end{proof}

We give the proof of the extinction of $x_2$ under the assumptions $0<\varepsilon^2<2$ and $\Lambda(\varepsilon, \alpha,\kappa)<0$.

\begin{proof}[\textbf{Proof of Theorem \ref{thm:rosenzweig-MacArthur-model-extinction-1}}]

From the one-dimensional logistic SDE (\ref{eq:1-dim-logistic-sde}), we know that $\gamma_{\varepsilon, \kappa}$ is the unique invariant probability measure supported on $]0,+\infty[$ and $\Pi_t^z\Rightarrow \gamma_{\varepsilon, \kappa}$ for any $z>0$ for (\ref{eq:1-dim-logistic-sde}). It follows that $$\frac{1}{t}\int_0^t \frac{z(s)}{1+z(s)}ds \underset{t\to\infty}{\longrightarrow} \int_0^{+\infty} \frac{z}{1+z}\gamma_{\varepsilon,\kappa}(z)\mathrm dz, \quad \text{almost surely.}$$
    
    Since $x_1(t)\leq z_t$ for all $t\geq 0$ almost surely, where $z_t$ is the solution of (\ref{eq:1-dim-logistic-sde}) starting at $z_0=x_1(0)$ and since $u\mapsto\frac{u}{1+u}$ is an increasing function on $\mathbb R_+$, then for all $t>0$, $$\frac{1}{t}\int_0^t \frac{x_1(s)}{1+x_1(s)}ds\leq \frac{1}{t}\int_0^t \frac{z(s)}{1+z(s)}ds.$$ Hence, $$\limsup_{t\to\infty}-\alpha + \frac{1}{t}\int_0^t \frac{x_1(s)}{1+x_1(s)}ds\leq -\alpha + \int_0^{+\infty} \frac{x}{1+x}\gamma_{\varepsilon,\kappa}(x)\mathrm dx =\Lambda(\varepsilon, \alpha, \kappa), \quad \text{almost surely,}$$ and from (\ref{eq:x-2-solution}), we obtain $$\limsup_{t\to\infty} \frac{1}{t}\log(x_2(t))\leq  \Lambda(\varepsilon, \alpha, \kappa), \quad \text{almost surely.}$$
    
    This concludes the proof of the extinction of $x_2$ whenever $\Lambda(\varepsilon, \alpha, \kappa)<0$.
\end{proof}

\begin{proof}[\textbf{Proof of Proposition \ref{prop:convergence-when-x-2-extinction}}]
    We will prove that $\{(X_t^x)_{t\geq 0}:x\in M_+^{(1)}\}$ is $H-$persistent. Indeed, Hypothesis \ref{hyp:main-hyp} is verified by Proposition \ref{prop:main-hypotheses-hold-rosenzweig-MacArthur-model} and let $V$ be a smooth function such that $V(x_1,x_2)=-\log(x_1)$ for $x_1\in\left]0,\frac{1}{2}\right[$ and $V(x_1,x_2)=0$ if $x_1\geq 1$ so that $V$ is positive, $\lim_{x_1\to 0}V(x)=+\infty$ and $$LV(x_1,x_2)|_{M_+^{(1)}} = -\left(1-\frac{x_1}{\kappa}-\frac{x_2}{1+x_1}\right) +\frac{\varepsilon^2}{2}=:H(x_1,x_2)|_{M_+^{(1)}}, \quad \forall 0<x_1<\frac{1}{2},$$ 
    
    which extends continuously to $-1+x_2+\frac{\varepsilon^2}{2}$ at $x_1=0$. Since $\Gamma V(x_1,x_2)$ is bounded, $V$ satisfies the strong law by Corollary \ref{cor:hyp-3-implies}. 

    Note that $\Gamma U_{\theta}^{\frac{1}{2}}(x_1,x_2)=\frac{\varepsilon^2\theta^2}{4}x_1^2e^{\theta(x_1+x_2)}$ so by using the same trick than in the proof of Proposition \ref{prop:1-dim-logistic-results}\textit{\textbf{(ii)}}, $\Gamma U_{\theta}^{\frac{1}{2}}\leq \frac{\varepsilon^2\theta e^{-2}}{\delta^2}U_{\tilde \theta}(x_1,x_2)+\mathrm{cst}$ where $\tilde \theta: = \theta +\delta <\theta^*$ for $\delta >0$ small enough and $U_{\theta}^{\frac{1}{2}}$ satisfies the strong law.
    
    Moreover, $\frac{\sqrt{U}}{1+|H|}$ is proper since $H$ is at most polynomial while $U$ is exponential. Finally, since the only invariant probability measure on $M_0^{(1)}$ is $\delta_0(x_1)\otimes\delta_0(x_2)$, we have $$\mu H=H(0,0)=-1+\frac{\varepsilon^2}{2}<0,$$ since $0<\varepsilon^2<2$. By $H-$persistence and Corollary \ref{cor:persistent-measure}, it implies that $\Pi(\mathrm dx_1,\mathrm dx_2)=\gamma_{\varepsilon,\kappa}(x_1)\mathrm dx_1\otimes\delta_{0}(x_2)\mathrm dx_2$ is the unique invariant probability measure supported on $M_+^{(1)}$ and $\forall x\in M_+^{(1)}$, $\Pi_t^x\Rightarrow \Pi$.
\end{proof}

\begin{remark}
    Given Proposition \ref{prop:convergence-when-x-2-extinction}, the bound from Theorem \ref{thm:rosenzweig-MacArthur-model-extinction-1} is exact since $$\limsup_{t\to\infty}\frac{1}{t}\log(x_2(t))=-\alpha + \Pi\left(\frac{x_1}{1+x_1}\right)=\Lambda(\varepsilon,\alpha,\kappa).$$
\end{remark}

\subsection[Rosenzweig-MacArthur model]{Stochastic persistence for degenerate Rosenzweig-MacArthur model}

First of all, we show that the main assumptions to apply the stochastic persistence methodology hold true. Here, we use a different form of $U$ than the one from \cite{Benaim2018_Persistence} and the proofs include the tools newly integrated throughout this text. 

We focus on the SDE described in (\ref{eq:rosenzweig-MacArthur-model-in-details}) and we show that conditions (\ref{eq:growth}) and (\ref{eq:Hofbauer-cond}) from Theorem \ref{thm:invasion-criterion} hold, and in particular the process satisfies the $H-$persistence condition.

\begin{proposition}\label{prop:stoch-persistence-rosenzweig-MacArthur-model}
 Condition (\ref{eq:growth}) is verified, which is $$\limsup_{||x||\to\infty}\frac{U^{\frac{1 }{2}}(x)}{1+|F_{1}(x)|+|F_{2}(x)|}=\infty.$$

 Furthermore, if we suppose that $0<\varepsilon^2<2$ and $\Lambda(\varepsilon,\alpha, \kappa)>0$, there exists positive numbers $p_1$, $p_2$ such that $$p_{1}\,\mu\lambda_{1}+p_{2}\,\mu\lambda_{2}>0,\quad\text{for all }\mu\in\mathcal{P}_{\mathrm{erg}}(\partial  M),$$ and the process is $H-$persistent.
\end{proposition}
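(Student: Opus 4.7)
The argument naturally splits into verifying the growth condition (\ref{eq:growth}) and the Hofbauer-type inequality (\ref{eq:Hofbauer-cond}).

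For (\ref{eq:growth}), I would take $U(x)=e^{\theta(x_1+x_2)}$ from Proposition \ref{prop:main-hypotheses-hold-rosenzweig-MacArthur-model}, so that $U^{1/2}(x)=e^{\theta(x_1+x_2)/2}$ grows exponentially in $\|x\|$. Directly from their explicit form, the drifts satisfy $|F_1(x)|\leq 1+x_1/\kappa+x_2$ (using $x_2/(1+x_1)\leq x_2$) and $|F_2(x)|\leq \alpha+1$, so $1+|F_1(x)|+|F_2(x)|$ is dominated by an affine function of $(x_1,x_2)$. Exponential versus polynomial growth then gives $\limsup_{\|x\|\to\infty}U^{1/2}(x)/(1+|F_1(x)|+|F_2(x)|)=+\infty$.

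The bulk of the work lies in the Hofbauer condition, and the crucial first step is to enumerate $\mathcal{P}_{\mathrm{erg}}(M_0)$. I would decompose $M_0=\{x_1=0\}\cup\{x_2=0\}$ and analyse each axis. On $\{x_1=0\}$ the system reduces to $\mathrm{d}x_1=0$, $\mathrm{d}x_2=-\alpha x_2\,\mathrm{d}t$, so every trajectory converges to the origin and the only ergodic measure supported on this axis is $\delta_{(0,0)}$. On $\{x_2=0\}$ the first coordinate obeys the one-dimensional logistic SDE (\ref{eq:1-dim-logistic-sde}); combining Proposition \ref{prop:1-dim-logistic-results}\textit{\textbf{(ii)}} with the boundary fixed point $0$, the ergodic measures on this axis are exactly $\delta_{(0,0)}$ and $\mu_{\star}:=\gamma_{\varepsilon,\kappa}(x_1)\,\mathrm{d}x_1\otimes\delta_0(\mathrm{d}x_2)$. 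Thus $\mathcal{P}_{\mathrm{erg}}(M_0)=\{\delta_{(0,0)},\mu_{\star}\}$.

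Next I would compute the invasion rates. Since $a_{11}\equiv\varepsilon^2$ and $a_{22}\equiv 0$, definition (\ref{eq:invasion-point}) gives $\lambda_1(x)=1-x_1/\kappa-x_2/(1+x_1)-\varepsilon^2/2$ and $\lambda_2(x)=-\alpha+x_1/(1+x_1)$. At the origin, $\delta_{(0,0)}\lambda_1=1-\varepsilon^2/2>0$ by the assumption $\varepsilon^2<2$, while $\delta_{(0,0)}\lambda_2=-\alpha<0$. Under $\mu_{\star}$, the $\Gamma$-mean $\int x_1\gamma_{\varepsilon,\kappa}(x_1)\mathrm{d}x_1=k\theta=\kappa(1-\varepsilon^2/2)$ yields $\mu_{\star}\lambda_1=1-(1-\varepsilon^2/2)-\varepsilon^2/2=0$ and $\mu_{\star}\lambda_2=\Lambda(\varepsilon,\alpha,\kappa)>0$ by hypothesis. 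Searching for $p_1,p_2>0$ such that $p_1\mu\lambda_1+p_2\mu\lambda_2>0$ on both measures therefore amounts to requiring $p_2\Lambda(\varepsilon,\alpha,\kappa)>0$ (automatic) and $p_1(1-\varepsilon^2/2)>p_2\alpha$. Taking for example $p_2=1$ and any $p_1>\alpha/(1-\varepsilon^2/2)$ works, and Theorem \ref{thm:invasion-criterion}\textit{\textbf{(ii)}} then delivers $H$-persistence.

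The only delicate point is the careful enumeration of $\mathcal{P}_{\mathrm{erg}}(M_0)$, in particular ruling out exotic ergodic measures on $\{x_1=0\}$ beyond $\delta_{(0,0)}$ (which follows cleanly from the exact linear equation for $x_2$ there). The vanishing $\mu_{\star}\lambda_1=0$ is a nice consistency check with Remark \ref{rem:VH-domain}, since on $\{x_2=0\}$ the Lyapunov function $V=-\log x_1$ extends smoothly to the interior of that axis, forcing the corresponding $H$-exponent to vanish; what saves persistence is the strictly positive weight carried by $\mu_{\star}\lambda_2=\Lambda>0$.
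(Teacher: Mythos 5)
Your argument follows essentially the same route as the paper: verify (\ref{eq:growth}) by comparing exponential with polynomial growth, enumerate $\mathcal{P}_{\mathrm{erg}}(M_0)$ as $\{\delta_{(0,0)},\ \gamma_{\varepsilon,\kappa}\otimes\delta_0\}$, compute the invasion rates on each, and choose $p_1,p_2$. One pleasant variation: to get $\mu_\star\lambda_1=0$, you compute directly using $\mathbb{E}[X]=k\theta=\kappa(1-\varepsilon^2/2)$ for $X\sim\gamma_{\varepsilon,\kappa}$, whereas the paper invokes the contrapositive of Theorem~\ref{thm:invasion-criterion}\textit{\textbf{(i)}} (since $\operatorname{supp}(\mu_\star)\nsubseteq M_0^{(1)}$). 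Both are valid; the direct computation is arguably more transparent, and your consistency check via Remark~\ref{rem:VH-domain} is a nice touch.

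However, there is a genuine gap. Theorem~\ref{thm:invasion-criterion} carries, in addition to the growth condition (\ref{eq:growth}), the standing hypothesis that $U^{1/2}$ \emph{satisfies the strong law}. You never verify this; you check (\ref{eq:growth}) and (\ref{eq:Hofbauer-cond}) and then invoke Theorem~\ref{thm:invasion-criterion}\textit{\textbf{(ii)}}, but the conclusion does not follow until the strong-law hypothesis on $U^{1/2}$ is also discharged. For $U_\theta(x)=e^{\theta(x_1+x_2)}$ one has $\Gamma U_\theta^{1/2}(x)=\tfrac{\varepsilon^2\theta^2}{4}x_1^2 e^{\theta(x_1+x_2)}$, which is \emph{not} bounded by $\tilde a\,U_\theta+\tilde b$ (the extra factor $x_1^2$ spoils it), so Corollary~\ref{cor:hyp-3-implies} does not apply directly with the same $\theta$. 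The paper fixes this by absorbing $x_1^2$ into a slightly larger exponent: pick $\delta>0$ with $\tilde\theta:=\theta+\delta<\theta^*=\tfrac{2}{\kappa\varepsilon^2}$, use $x_1^2 e^{-\delta x_1}\leq 4e^{-2}/\delta^2$, and conclude $\Gamma U_\theta^{1/2}\leq \mathrm{cst}\cdot U_{\tilde\theta}+\mathrm{cst}$ with $U_{\tilde\theta}$ still satisfying Hypothesis~\ref{hyp:main-hyp}. This extra step (referenced in the paper as ``the same trick'' as in Propositions~\ref{prop:1-dim-logistic-results}\textit{\textbf{(ii)}} and~\ref{prop:convergence-when-x-2-extinction}) is needed to complete your argument.
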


\begin{proof}
Since $U$ is of exponential growth while $F_i$ are polynomial, (\ref{eq:growth}) is obviously verified.

It remains to verify the condition (\ref{eq:Hofbauer-cond}) of Theorem \ref{thm:invasion-criterion}. We aim to evaluate $\mathcal P_{\text{erg}}(M_0)$: since $\mathcal P_{\text{erg}}(M_0)=\mathcal P_{\text{erg}}(M_0^{(1)})\cup \mathcal P_{\text{erg}}(M_0^{(2)})$, it follows from the study of the one-dimensional logistic SDE (see Section 5.1) that $$\mathcal P_{\text{erg}}(M_0^{(2)})=\{\gamma_{\varepsilon,\kappa}\otimes \delta_0\},$$ while $\mathcal P_{\text{erg}}(M_0^{(1)})=\{\delta_{(0,0)}\}$ by the deterministic behavior of $x_2$. We treat each case independently:

\begin{enumerate}
 \item Let $\mu = \delta_{(0, 0)}\in \mathcal P_{\text{erg}}(M_0)$, then:
 \begin{align*}
  p_1\, \delta_{(0, 0)}(\lambda_1) + p_2\, \delta_{(0, 0)}(\lambda_2) = p_1 \cdot \lambda_1(0, 0) + p_2 \cdot \lambda_2(0, 0).
 \end{align*}

 By definition of $\lambda_i$ in (\ref{eq:invasion-point}), $$\lambda_1(0, 0) = F_1(0, 0) - \frac{\varepsilon^2}{2} = 1 - \frac{\varepsilon^2}{2} > 0, \quad \text{since } \varepsilon^2 < 2,$$ and $$ \lambda_2(0, 0) = F_2(0, 0) = -\alpha < 0,$$ thus $p_1\, \mu(\lambda_1)+p_2\, \mu(\lambda_2) > 0$ for $p_1$ large enough, $p_2$ small enough, i.e. $p_1 > \frac{p_2 \alpha}{1 - \frac{\varepsilon^2}{2}}$ for any $p_2>0.$

 \item Let $\mu = \gamma_{\varepsilon,\kappa}\otimes \delta_0 \in \mathcal P_{\text{erg}}(M_0)$. In particular, $\text{supp}(\mu) \nsubseteq M_0^{(1)}$, and by contraposition of Theorem \ref{thm:invasion-criterion}\textit{\textbf{(i)}}, it follows that $\mu(\lambda_1) = 0$ so that condition (\ref{eq:Hofbauer-cond}) becomes $\mu(\lambda_2) > 0$.

 Since $\lambda_2(x_1, x_2) = F_2(x_1, x_2) = -\alpha + \frac{x_1}{1 + x_1}$, $$\mu(\lambda_2) =\int_0^{+\infty} \left( \frac{x}{1 + x} - \alpha \right) \gamma_{\varepsilon, \kappa}(x) \mathrm dx = \Lambda(\varepsilon, \alpha, \kappa),$$ which is strictly positive by assumption. 
\end{enumerate}

The same trick than in the proofs of Proposition \ref{prop:1-dim-logistic-results}\textit{\textbf{(ii)}} and Proposition \ref{prop:convergence-when-x-2-extinction} implies that $U_{\theta}^{\frac{1}{2}}$ satisfies the strong law. Therefore, the conditions of Theorem \ref{thm:invasion-criterion}\textit{\textbf{(ii)}} are satisfied and the process is $H$-persistent.
\end{proof}

\subsection[Rosenzweig-MacArthur model]{Convergence for degenerate Rosenzweig-MacArthur model}

We now aim to prove the existence of points $x^*$ satisfying Hörmander's condition (respectively the strong Hörmander condition) in $\Gamma_{ M_+}$, in order to show almost sure convergence (respectively in Total variation) of the occupation measure (respectively of $(P_t(x,\cdot))_{t\geq 0}$) towards the unique invariant probability. We focus on our model of interest (\ref{eq:rosenzweig-MacArthur-model-in-details}).

\begin{proposition}\label{prop:strong-hormander-condition-rosenzweig-MacArthur-model}
 The strong Hörmander condition holds for every points $x^*\in  M_+$.
\end{proposition}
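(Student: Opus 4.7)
The plan is to write (1.1) in Stratonovich form and exhibit two vectors in the required span that are linearly independent at every interior point. Since there is a single Brownian motion ($m=1$) with $\Sigma^1_1 = \varepsilon$, $\Sigma^1_2 = 0$, the diffusion vector is $S^1(x) = (\varepsilon x_1, 0)^T$. Applying the Itô--Stratonovich correction given in the preamble to Section 3.2, the drift becomes
\[
S^0(x) = \begin{pmatrix} x_1\!\left(1 - \tfrac{x_1}{\kappa} - \tfrac{x_2}{1+x_1}\right) - \tfrac{\varepsilon^2 x_1}{2} \\[2pt] x_2\!\left(-\alpha + \tfrac{x_1}{1+x_1}\right) \end{pmatrix}.
\]

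First, I observe that for any $x^* = (x_1^*, x_2^*) \in M_+$ (so $x_1^*, x_2^* > 0$), the vector $S^1(x^*) = (\varepsilon x_1^*, 0)^T$ is nonzero and spans the $x_1$-axis. To complement it, I compute the Lie bracket $[S^1, S^0] = DS^0 \cdot S^1 - DS^1 \cdot S^0$. Since $DS^1$ has vanishing second row and $S^1$ has vanishing second component, the second component of $[S^1, S^0]$ reduces cleanly to $\varepsilon x_1 \cdot \partial_{x_1} S^0_2$. A direct differentiation gives $\partial_{x_1} S^0_2(x) = x_2/(1+x_1)^2$, so that
\[
[S^1, S^0]_2(x) = \frac{\varepsilon\, x_1\, x_2}{(1+x_1)^2}.
\]

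This quantity is strictly positive at every $x^* \in M_+$, so $[S^1, S^0](x^*)$ has a nonzero second coordinate and is therefore linearly independent from $S^1(x^*)$. Since $S^0, S^1 \in [\{S^0, S^1\}]_0 \subset [\{S^0, S^1\}]$, the bracket $[S^1, S^0]$ qualifies as one of the brackets appearing in Definition 3.12, and the set $\{S^1(x^*), [S^1, S^0](x^*)\}$ already spans $\mathbb{R}^2$. This verifies the strong Hörmander condition at every $x^* \in M_+$, completing the proof. No significant obstacle is expected: the only subtle point is keeping track of the Itô--Stratonovich correction, but since the diffusion acts only on $x_1$, the correction lives entirely in the first component and never interferes with the computation of the decisive second component of the Lie bracket.
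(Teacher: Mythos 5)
Your proof is correct and follows essentially the same route as the paper: write the system in Stratonovich form, compute the single Lie bracket between $S^0$ and $S^1$, and observe that its nonzero second component makes it linearly independent from $S^1 = (\varepsilon x_1, 0)^T$ on $M_+$. The only cosmetic difference is that you argue directly from the nonvanishing of the second coordinate, while the paper packages the same computation as the positivity of $\det\bigl([S^0,S^1](x),\,S^1(x)\bigr) = \varepsilon^2 x_1^2 x_2/(1+x_1)^2$.
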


\begin{proof}
Rewriting (\ref{eq:rosenzweig-MacArthur-model-in-details}) using the Stratonovich formalism introduced in (\ref{eq:kolmogorov-stratonovich}), it yields $$S_1^1(x) = x_1 \cdot \varepsilon, \ S_2^1(x) = S_1^2(x) = S_2^2(x) = 0,$$

and therefore
$$S^0(x) = \begin{pmatrix} x_1 (F_1(x_1, x_2) - \varepsilon^2/2) \\ x_2 F_2(x_1, x_2) \end{pmatrix}, \quad S^1(x) = \begin{pmatrix} x_1 \varepsilon \\ 0 \end{pmatrix}.$$

We obtain
\begin{align*}
 [S^0, S^1](x) &= DS^1(x) \cdot S^0(x) - DS^0(x) \cdot S^1(x) =
 \begin{pmatrix}
  -x_1^2 \varepsilon \cdot \frac{\partial F_1}{\partial x_1}(x) \\
  - x_1 \varepsilon \cdot x_2 \cdot \frac{\partial F_2}{\partial x_1}(x)
 \end{pmatrix}.
\end{align*}

In particular,
\begin{align*}
 \det\left( [S^0, S^1](x),\, S^1(x) \right) &= \det
 \begin{pmatrix}
 -x_1^2 \varepsilon \cdot \frac{\partial F_1}{\partial x_1}(x) & x_1 \varepsilon \\
 -x_1 \varepsilon \cdot x_2 \cdot \frac{\partial F_2}{\partial x_1}(x) & 0
 \end{pmatrix}
 = x_1^2 \varepsilon^2 x_2 \cdot \frac{1}{(1 + x_1)^2} > 0, \quad \forall x \in  M_+,
\end{align*}

which proves that, for all $x^* \in  M_+$, the set $\{[S^0, S^1](x^*),\, S^1(x^*)\}$ spans $\mathbb{R}^2$, and the (strong) Hörmander condition holds on $ M_+$.
\end{proof}

The last step is to show that $\Gamma_{ M_+}$ contains $ M_+$. Here, we use the notion of control system introduced in (\ref{eq:deterministic-control-system}). 

\begin{proposition}\label{prop:accessibility-rosenzweig-MacArthur-model}
 $\Gamma_{ M_+}= M_+$ and every points of $ M_+$ is accessible from anywhere in $ M_+$.
\end{proposition}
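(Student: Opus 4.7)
By Proposition \ref{prop:accessibility-through-control-system}, it suffices to show that for every $x,p\in M_+$ and every neighborhood $O$ of $p$, there exist a piecewise-continuous control $u\colon\mathbb R_+\to\mathbb R$ and $T\geq 0$ with $y(u,x,T)\in O$. Writing out the control system (\ref{eq:deterministic-control-system}) associated with the Stratonovich form of (\ref{eq:rosenzweig-MacArthur-model-in-details}), we get
$$\dot y_1 = y_1\Bigl(1-\tfrac{y_1}{\kappa}-\tfrac{y_2}{1+y_1}-\tfrac{\varepsilon^2}{2}+\varepsilon\,u(t)\Bigr),\qquad \dot y_2 = y_2\Bigl(-\alpha+\tfrac{y_1}{1+y_1}\Bigr).$$
Crucially, $u$ enters only the $y_1$-equation and can tune its per-capita growth rate to any real number, while $|\dot y_2/y_2|\leq \alpha+1$ is uniformly bounded regardless of $u$.

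The plan is a three-phase construction. \emph{Phase 1:} apply a large constant control $u\equiv U$ for a short time $\delta_1$; for $|U|$ large, $\dot y_1/y_1\approx \varepsilon U$, so $y_1$ moves exponentially fast, and choosing $\delta_1$ proportional to $1/|U|$ allows $y_1$ to reach any prescribed value $v>0$. During this time, $y_2$ changes only by a factor $e^{O(\delta_1)}$, close to $1$. \emph{Phase 2:} freeze $y_1\equiv v$ by feeding back
$$u(t)=-\tfrac{1}{\varepsilon}\Bigl(1-\tfrac{v}{\kappa}-\tfrac{y_2(t)}{1+v}-\tfrac{\varepsilon^2}{2}\Bigr),$$
which, since $y_2(t)=y_2^{(1)}\exp(h(v)t)$ with $h(v):=-\alpha+v/(1+v)$, becomes a continuous explicit function of $t$. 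Run this for a time $T$ so that $y_2(T)$ equals a prescribed target. \emph{Phase 3:} symmetric to Phase 1, rapidly move $y_1$ from $v$ to $p_1$ over a short interval $\delta_2$, with $y_2$ again changing only by a factor close to $1$.

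The key algebraic observation that makes Phase 2 sufficient is that the map $v\mapsto h(v)=-\alpha+v/(1+v)$ is continuous and takes values in $(-\alpha,1-\alpha)$; since $\alpha\in(0,1)$, both signs are available, so for any target ratio $p_2/y_2^{(1)}>0$ one can solve $T\,h(v)=\log(p_2/y_2^{(1)})$ by choosing $v$ so that $h(v)$ has the correct sign and setting $T$ accordingly. Combining the three phases and invoking continuous dependence of the ODE flow on its data, the endpoint $y(u,x,T)$ depends continuously on the parameters $(U,\delta_1,v,T,\delta_2)$, so taking $|U|$ large (equivalently $\delta_1,\delta_2$ small) drives $y(u,x,T)$ arbitrarily close to $p$ and hence eventually into $O$. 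The only delicate point is quantifying the small $y_2$-drift during the fast Phases 1 and 3 so that after Phase 3 one still lands in $O$; this is handled by the uniform bound $|\dot y_2/y_2|\leq \alpha+1$, which gives $y_2$ at the end of Phase 3 within a factor $e^{(\alpha+1)(\delta_1+\delta_2)}$ of $p_2$ and shrinks to $1$ as $\delta_1,\delta_2\to 0$.
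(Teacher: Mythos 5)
Your proposal is correct, but it takes a genuinely different route from the paper. The paper's proof changes variables to $v=\varepsilon u-\varepsilon^2/2$ and uses a fully piecewise-\emph{constant} control in three stages: first $v=-1$ to funnel the trajectory near the origin, then a large constant $v^*$ so the trajectory sweeps right along the parabola $P_{v^*}=\{F_1+v^*=0\}$, climbs above it, and crosses the line $L=\{x_1=\alpha/(1-\alpha)\}$ after passing the target height $z_2$, and finally $v=-R$ with $R$ large to push $x_1$ leftward into $O_z$. The argument is essentially geometric: the parabola and the vertical line $L$ partition the plane into regions where the signs of $\dot y_1$ and $\dot y_2$ are known, and the trajectory is tracked through these regions. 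Your approach instead exploits that $u$ only enters the $y_1$-equation and can be solved for: a bang–feedback–bang control (fast move of $y_1$ to $v$, freeze $y_1\equiv v$ via an explicit open-loop feedback while $y_2$ runs exponentially with rate $h(v)=-\alpha+v/(1+v)$, fast move of $y_1$ to $p_1$), combined with the observation that $h$ ranges over $(-\alpha,1-\alpha)\ni 0$, so $y_2$ can be driven either up or down. Both are valid; your version is more algorithmic and avoids the trip to the origin and the geometric analysis of the parabola, at the cost of using a non-constant (but still piecewise-continuous, hence admissible) control in the middle phase.

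One small point you should make explicit: the sign of $h(v)$ must be committed to before Phase~1 is run (since $v$ is the Phase~1 target), while the correct sign depends on whether $y_2$ at the end of Phase~1 is above or below $p_2$. When $p_2\neq x_2$, this is harmless because $y_2^{(1)}\to x_2$ as $\delta_1\to 0$, so for $|U|$ large enough the sign of $p_2-y_2^{(1)}$ agrees with that of $p_2-x_2$. When $p_2=x_2$, simply take $T=0$ (skip Phase~2): the endpoint then equals $p$ up to factors $E_1E_3\to 1$, and the continuity argument still lands you in $O$. With this caveat addressed, the argument is complete.
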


\begin{proof}
In our model, we introduce a new control variable $$v=\varepsilon u- \frac{\varepsilon^2}{2} \ \Leftrightarrow \ u=\frac{v}{\varepsilon} +\frac{\varepsilon}{2},$$ so that the associated control system $y(t)$ is solution of the differential equation
\begin{align}\label{eq:control-system-r-m-model}
 \begin{split}
  \dot y(t) &= S^0\left(y(t)\right) +\sum_{j=1}^m u^j S^j\left(y(t)\right)\\
  &= 
  \begin{pmatrix}
  y_1(t)\left(F_1(y_1(t),y_2(t))-\varepsilon^2/2\right)\\ y_2(t) F_2(y_1(t),y_2(t)) \end{pmatrix}
  +
  \begin{pmatrix}\varepsilon y_1(t)\\ 0\end{pmatrix}
  \cdot
  \begin{pmatrix} u\\ 0 \end{pmatrix}\\ 
  &= 
  \begin{pmatrix} y_1(t)\left(F_1(y_1(t),y_2(t))+ v \right) \\ y_2(t)F_2(y_1(t),y_2(t))\end{pmatrix},
 \end{split}
\end{align}

Let $L$ be the vertical line defined by $x_1=\frac{\alpha}{1-\alpha}$, and let $x_2\geq 0$: it implies that, along this line, $\mathrm dx_2=0$ since 
\begin{align*}
 F_2(x_1, x_2)&=F_2\left(\frac{\alpha}{1-\alpha}, x_2\right)= -\alpha + \alpha=0.
\end{align*}

Before this line, the drift associated to the second coordinate of the control system $y(t)$ is negative, i.e. $y_2(t)$ is decreasing, while it is increasing after the line.\\

Let $P_v(x_1,x_2)$ be the parabola defined by
\begin{align*}\label{eq:parabola}
 (1+v-\frac{x_1}{\kappa})(1+x_1)=x_2 \ &\Leftrightarrow \ -\frac{1}{\kappa}x_1^2+\left(v-\frac{1}{\kappa}+1\right)x_1 + 1+v = x_2.\\
 & \Leftrightarrow \ F_1(x_1,x_2)+v=0. 
\end{align*}

This last equality implies that below $P_v$, the drift associated to the first coordinate of the control system $y(t)$ is positive, i,e, $y_1(t)$ is increasing, while it is decreasing above $P_v$.

It naturally implies that $P_v$ reaches its maximum $(x_1^*, x_2^*)$ at
\begin{align*}
 x_1^*&=\frac{1}{2}\left(\kappa v - 1 + \kappa \right),\\
 x_2^*&= \left(1+v-\frac{(\kappa v-1+\kappa)}{2\kappa}\right)\left((1+\frac{(\kappa v-1+\kappa)}{2}\right)=\frac{(\kappa (v+1) + 1)^2}{4\kappa}.
\end{align*}

Let $z=(z_1,z_2)\in  M_+$ and $O_z$ be an open neighborhood of $z$: we can choose $v^*$ sufficiently large so that $z$ is always below the graph of $P_{v^*}$ and such that the $x_1-$coordinate of the maximum of $P_{v^*}$ is above $\frac{\alpha}{1-\alpha}$.

For a small-enough open neighborhood of the origin $O$, $\forall x\in O\cap  M_+$, the control system $t\mapsto y(v^*, x,t)$ remains below $P_{v*}$ until it crosses $P_{v^*}$ near $(\kappa(1+v^*), 0)$. Then, it remains above $P_{v^*}$ while being decreasing on $x_1$, increasing on $x_2$ until it crosses $L$, when it becomes decreasing on both $x_1$ and $x_2$. In particular, it crosses $x_2=z_2$ while $x_1>z_1$.\\

Let's construct a piecewise constant control $v(t)$ as follows:
\begin{enumerate}[label=(\textit{\roman*})]
 \item $v(t)=-1$ until $t\mapsto y(v,x,t)$ enters $O$.
 \item Then, $v(t)=v^*$ until $t\mapsto y(v,x,t)$ crosses $x_2=z_2$ (after crossing $P_{v^*}$).
 \item Finally, $v(t)=-R$ for $R$ large enough so that $t\mapsto y(v,x,t)$ enters $O_z$, i.e. decreasing on $x_1$ as fast as possible while remaining in a small-enough neighborhood of $x_2=z_2$.
\end{enumerate}

It follows from Stroock and Varadhan support theorem (see Proposition \ref{prop:accessibility-through-control-system}) that $ M_+=\Gamma_{ M_+}$ since any point $z\in  M_+$ is accessible from any point $x\in  M_+$, i.e. $\Gamma_{ M_+}=\cap_{z\in  M_+} \Gamma_z= M_+$.
\end{proof}

Figure \ref{fig:control-system-r-m-model} exhibits the structure of the associated control system through an example: starting at $x=(0.3,0.3)$, $z=(1,2)$ with $O_z=B_{0.15}(z),$ $O=B_{0.15}(0,0)$, $\alpha=0.3$, $\kappa=0.5$ and $v^*=3$.

Starting at $x$, the control system is sent to $O\cap M$, then go to the right until it passes $P_{v^*}$, moves up until it crosses the line $x_2=z_2$ and is finally sent to $O_z$.

\begin{figure}[H]
 \centering
 \includegraphics[width=0.5\textwidth]{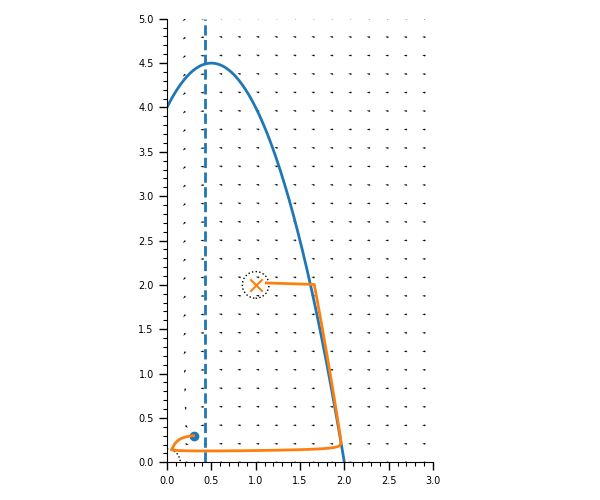}
 \caption{Example of (\ref{eq:control-system-r-m-model}) with $x=(0.3,0.3)$, $z=(1,2)$ with $O_z=B_{0.15}(z),$ $O=B_{0.15}(0,0)$, $\alpha=0.3$, $\kappa=0.5$ and $v^*=3$. The trajectory of $y(x,v,t)$ with the proper $v$ as defined before is depicted in orange, with the blue parabola $P_{v^*}$ whose maximum is attained after the vertical blue dashed line $L$. }
 \label{fig:control-system-r-m-model}
\end{figure}

As a consequence of Propositions \ref{prop:stoch-persistence-rosenzweig-MacArthur-model}, \ref{prop:strong-hormander-condition-rosenzweig-MacArthur-model} and \ref{prop:accessibility-rosenzweig-MacArthur-model}, we now obtain:
\begin{corollary}\label{cor:conclusions-holds-rosenzweig-macarthur-model}
    In (\ref{eq:rosenzweig-MacArthur-model-in-details}), every point $x\in M_+$ is accessible from $M_+$, satisfies the strong Hörmander condition and $\{(X_t^x)_{t\geq 0}:x\in M_+\}$ is $H-$persistent so that all conclusions of the Corollary \ref{cor:convergence-as-tv-hormander} apply.
\end{corollary}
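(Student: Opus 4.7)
[\textbf{Proof sketch of Corollary \ref{cor:conclusions-holds-rosenzweig-macarthur-model}}]
The plan is to assemble the three preceding propositions and invoke Corollary \ref{cor:convergence-as-tv-hormander} as a black box. First I would recall that Proposition \ref{prop:stoch-persistence-rosenzweig-MacArthur-model}, under the standing assumptions $0<\varepsilon^2<2$ and $\Lambda(\varepsilon,\alpha,\kappa)>0$, already delivers the $H$-persistence of the family $\{(X_t^x)_{t\geq 0}:x\in M_+\}$. Indeed, condition (\ref{eq:growth}) is immediate from the exponential growth of $U$ against the polynomial $F_i$, and the Hofbauer-type condition (\ref{eq:Hofbauer-cond}) is verified on both ergodic measures $\delta_{(0,0)}$ and $\gamma_{\varepsilon,\kappa}\otimes\delta_0$ exhausting $\mathcal{P}_{\mathrm{erg}}(M_0)$, so Theorem \ref{thm:invasion-criterion}\textit{\textbf{(ii)}} applies.

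Next I would combine Propositions \ref{prop:strong-hormander-condition-rosenzweig-MacArthur-model} and \ref{prop:accessibility-rosenzweig-MacArthur-model}. The former shows that at every $x^*\in M_+$ the vectors $S^1(x^*)$ and $[S^0,S^1](x^*)$ are linearly independent (the explicit determinant $x_1^2 \varepsilon^2 x_2 /(1+x_1)^2$ being strictly positive on $M_+$), so the strong Hörmander condition holds at every point of $M_+$. The latter establishes $\Gamma_{M_+}=M_+$ via the piecewise-constant control steering a trajectory through a neighborhood of the origin, then above the parabola $P_{v^*}$, and finally contracting it onto $O_z$. Choosing any $x^*\in M_+$ therefore gives a point in $\Gamma_{M_+}\cap M_+$ satisfying the strong Hörmander condition, which is exactly the hypothesis of Corollary \ref{cor:convergence-as-tv-hormander}\textit{\textbf{(iii)}}.

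With these three ingredients in hand, the conclusions of Corollary \ref{cor:convergence-as-tv-hormander} apply verbatim: there is a unique invariant probability $\Pi$ on $M_+$, absolutely continuous with respect to Lebesgue measure (absolute continuity being a consequence of the Hörmander hypoellipticity theorem applied at the accessible point $x^*$); for every $x\in M_+$ the empirical occupation measures $\Pi_t^x$ converge weakly to $\Pi$ almost surely; the strong law $\Pi_t^x f \to \Pi f$ holds almost surely for all $f\in L^1(\Pi)$ with the required local integrability; and $P_t(x,\cdot)$ converges to $\Pi$ in Total variation. Since every hypothesis has been checked in the preceding propositions, there is no real obstacle: the only task is to align the notation from Propositions \ref{prop:stoch-persistence-rosenzweig-MacArthur-model}--\ref{prop:accessibility-rosenzweig-MacArthur-model} with the hypotheses of Corollary \ref{cor:convergence-as-tv-hormander}, which is a bookkeeping matter rather than a technical step.
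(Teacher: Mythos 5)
Your proof is correct and follows exactly the same route the paper takes: the paper introduces this corollary with the phrase ``As a consequence of Propositions \ref{prop:stoch-persistence-rosenzweig-MacArthur-model}, \ref{prop:strong-hormander-condition-rosenzweig-MacArthur-model} and \ref{prop:accessibility-rosenzweig-MacArthur-model}'' and gives no further argument, precisely because the statement is obtained by assembling those three propositions and feeding them into Corollary \ref{cor:convergence-as-tv-hormander} as a black box, which is exactly what you do. Your additional remarks (that the Hofbauer condition is checked on both ergodic measures, that the determinant in Proposition \ref{prop:strong-hormander-condition-rosenzweig-MacArthur-model} is strictly positive, that absolute continuity comes from hypoellipticity) are accurate but simply unpack details already contained in those propositions and in Corollary \ref{cor:convergence-as-tv-hormander}.
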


\section{Convergence rate}\label{section:convergence-rate}

The last question regards the rate of convergence in Corollaries \ref{cor:convergence-as-tv-hormander} and \ref{cor:conclusions-holds-rosenzweig-macarthur-model}.

Exhibiting an exponential convergence rate is relatively direct when the extinction set is compact as proposed in Theorem A.12 from \cite{benaim-24-conv-rate}. However, a non-compact extinction set as $\partial R_+^n$ is more complex to handle. In particular, we have to ensure that $H$ is strictly negative outside a compact subset of $ M$. In the case of (\ref{eq:kolmogorov}), this idea is called \emph{H-persistence at infinity}, as detailed in Section 4.5 from \cite{Benaim2018_Persistence}.

Unfortunately, the conditions to ensure such a stronger $H-$persistence property fails in the case of (\ref{eq:rosenzweig-MacArthur-model-in-details}). As proposed in Theorem 5.1\textbf{\textit{(iii)}} and Example 5 in \cite{Benaim2018_Persistence}, typical models such that $H-$persistence at infinity holds occurred when the drifts are bounded below, in the sense that $\liminf_{||x||\to+\infty} F_i(x)>-\infty$, which is not verified in (\ref{eq:rosenzweig-MacArthur-model-in-details}). 

However, we can prove a polynomial convergence rate by following \cite{Benaim_Antoine_2022}. We strengthen Hypothesis \ref{hyp:main-hyp} as follows:
\begin{hypothesis}\label{hyp:main-hyp-strong}
 In addition to Hypothesis \ref{hyp:main-hyp}, we suppose that the function $U: M\to [1,\infty[$ satisfies 
 \begin{equation}\label{eq:gamma-condition-just-for-convergence-rate}
     \Gamma  (U)\leq cU^2,
 \end{equation}

 where $\Gamma$ is defined in (\ref{eq:extended-carre-du-champ-kolmogorov-general}),
 \begin{equation}\label{eq:U-ln-x-positive}
  \liminf_{||x||\to\infty} \frac{U(x)}{\ln(||x||)}>0,
 \end{equation}
 \begin{equation}\label{eq:LU-F-i-negative}
  \limsup_{||x||\to\infty} \left(LU(x)+p_0\sum_{i=1}^n |F_i(x)|\right)<0, \quad \text{for some } p_0>0,
 \end{equation}
 and
 \begin{equation}\label{eq:F-i-bounded-U}
  \sum_{i=1}^n |F_i(x)|\leq CU^{d_0}(x), \quad \text{for some } d_0\geq 1 \text{ and } C>0.
 \end{equation}
\end{hypothesis}

We also assume that condition (\ref{eq:Hofbauer-cond}) from Theorem \ref{thm:invasion-criterion} is verified for $I=(1,\cdots, n)$: we notice that $\sum_{i=1}^n p_i$ can be assumed to be sufficiently small without loss of generality. Then, in view of the definition of $H-$persistence, we modify the construction of $V$ from Theorem \ref{thm:invasion-criterion} so that $$V(x)=U(x)+\sum_{i=1}^n p_i \ln\left(\frac{1}{x_i}\right), \quad \forall x\in M_+.$$

For $\sum_{i=1}^n p_i$ sufficiently small, it implies that $V$ is positive on $M_+$ and from (\ref{eq:U-ln-x-positive}), we deduce $$V(x)\to \infty\quad \text{ as } ||x||\to\infty.$$ Thus, we obtain $$H|_{M_+}=LV(x)=LU(x)+\sum_{i=1}^np_i\lambda_i,$$ which can be extended continuously to $M_0$. In particular, since $U$ is defined on all $ M$, the persistence condition $\Lambda^-(H)>0$ is still verified if $\sum_{i=1}^n p_i\mu(\lambda_i)>0$ for all $\mu\in\mathcal P_{\mathrm{erg}}(M_0)$ since $\Lambda^-(U)=0$ (see Remark \ref{rem:VH-domain}).\\

In these settings, let $q_0>1$ be a constant such that $$-a+\frac{q_0-1}{2}c=0,$$ where $a>0$ is the constant from condition (\ref{eq:hyp-LU}) in Hypothesis \ref{hyp:main-hyp} and $c>0$ the one from (\ref{eq:gamma-condition-just-for-convergence-rate}) in Hypothesis \ref{hyp:main-hyp-strong}. Then, for $q\in\Big]1,\min\{q_0, \frac{q_0+2}{2}\}\Big[$, we define 
\begin{equation}\label{eq:definition-W-q}
 W_q=
 \begin{cases}
  V^q + CU^q, &\text{if } 1<q\leq2,\\
  V^q + CU^{2q-2}, &\text{if } q>2,
 \end{cases}
\end{equation}
where $C$ is the positive constant from (\ref{eq:F-i-bounded-U}).

\begin{theorem}[\cite{Benaim_Antoine_2022}, Theorem 4.1]\label{thm:polynomial-conv-rate}
 Assume that there exists a map $U: M\to[1,\infty[$ satisfying Hypothesis \ref{hyp:main-hyp-strong} and condition (\ref{eq:Hofbauer-cond}) from Theorem \ref{thm:invasion-criterion}. Moreover, we suppose that there exists $x^*\in \Gamma_{M_+}\cap M_+$ which satisfies the strong Hörmander condition. Then, for all $q\in\Big]1,\min\{q_0, \frac{q_0+2}{2}\}\Big[$ and for all $1\leq \beta \leq q$,
 \begin{equation}\label{eq:polynomial-rate-of-convergence}
  \lim_{t\to\infty} t^{\beta-1}||P_t(x,\cdot)-\Pi(\cdot)||_{W_{\beta, q}}=0, \quad \forall x\in M_+, 
 \end{equation}
 where $W_{\beta, q}=W_q^{1-\beta/q}$ with $W_q$ defined as in (\ref{eq:definition-W-q}), $||f||_{W_{\beta,q}}=\sup_{x\in M_+} \frac{|f(x)|}{1+W_{\beta,q}(x)}, \ \forall f\in\mathcal{B}(M_+),$ and $$\|\mu\|_{W_{\beta,q}}=\sup_{|g|\leq W_{\beta,q}}|\mu(g)|,$$ for all signed measures $\mu$.
\end{theorem}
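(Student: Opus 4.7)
The plan is to follow the classical subgeometric (polynomial) Foster–Lyapunov route: establish a drift inequality of the form $LW_q\leq -\kappa\, W_q^{1-1/q}+K\mathbf 1_C$ outside a compact $C\subset M_+$, verify a small-set minorization on compacta coming from the strong Hörmander condition and accessibility, and then invoke a subgeometric Harris theorem (e.g.\ Douc–Fort–Guillin, or Hairer's formulation) to extract the polynomial rate (\ref{eq:polynomial-rate-of-convergence}).

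First I would compute $LW_q$ directly. Writing $W_q=V^q+CU^{\alpha}$ with $\alpha\in\{q,2q-2\}$ as in (\ref{eq:definition-W-q}), Itô's formula yields
\[
L(V^q)=qV^{q-1}LV+\tfrac{q(q-1)}{2}V^{q-2}\Gamma(V).
\]
Using $LV=LU+\sum_i p_i\lambda_i$, condition (\ref{eq:hyp-LU}) gives a leading term $\lesssim -aqV^{q-1}U$, while $\Gamma(U)\leq cU^2$ from (\ref{eq:gamma-condition-just-for-convergence-rate}), together with the boundedness of the $\Sigma_i^j$'s (which tames the logarithmic contributions to $\Gamma(V)$ coming from the $p_i\ln(1/x_i)$ terms), bounds the correction by $\tfrac{q(q-1)}{2}cV^{q-2}U^2$. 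The identity $-a+\tfrac{q_0-1}{2}c=0$ defining $q_0$ is precisely what is needed so that, for $q<q_0$, the net coefficient of $V^{q-1}U$ is strictly negative far from a compact set. The cross contribution $qV^{q-1}\sum_i p_i\lambda_i$, which by (\ref{eq:F-i-bounded-U}) can grow like $V^{q-1}U^{d_0}$, is absorbed by the drift of the auxiliary $CU^{\alpha}$ term; matching the exponent of this bad term with that produced by $L(U^{\alpha})$ is what dictates the case split $q\leq 2$ vs.\ $q>2$ in (\ref{eq:definition-W-q}). Condition (\ref{eq:U-ln-x-positive}) and the blow-up of $V$ near $M_0$ ensure that the sublevel sets of $W_q$ are compactly contained in $M_+$, which is what turns the pointwise drift bound into the standard form with a compact exceptional set.

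Next, the minorization on these sublevel sets follows from general principles: the strong Hörmander condition at $x^*$ produces, via Hörmander's hypoellipticity theorem, a smooth positive transition density in a neighborhood of $x^*$, so that $x^*$ is a Doeblin point in the sense of Definition \ref{def:Doeblin}; accessibility of $x^*$ from every point of $M_+$ combined with the Feller property and the remark following Definition \ref{def:Doeblin} then extends the minorization into the statement that every compact subset of $M_+$ is a small set. The polynomial drift of Step 1 together with this small-set property is exactly the hypothesis of the subgeometric Harris theorem, which yields $\|P_t(x,\cdot)-\Pi\|_{W_{\beta,q}}=o(t^{-(\beta-1)})$ for all $1\leq\beta\leq q$, with the family of rates and weights $W_{\beta,q}=W_q^{1-\beta/q}$ arising from the standard Young-type interpolation between the endpoints $\beta=1$ (full $W_q$-moment, trivial rate) and $\beta=q$ (total variation, rate $t^{q-1}$).

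The main obstacle is the drift computation of Step 1: every clause of Hypothesis \ref{hyp:main-hyp-strong} plays a distinct role, the precise form of $W_q$ and the admissible range of $q$ depend delicately on balancing the carré du champ correction against the polynomial growth of the $F_i$'s, and the two branches in (\ref{eq:definition-W-q}) are not cosmetic but forced by this balancing. Once the polynomial drift is secured, the remaining ingredients—small sets from hypoellipticity plus accessibility, and the invocation of the abstract subgeometric ergodicity theorem—are standard.
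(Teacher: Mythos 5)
This theorem is stated in the paper as a cited result from the reference attributed to it (the bracket in the theorem header points to an external Theorem 4.1), and the present paper supplies no proof of it; the authors only verify its hypotheses for the Rosenzweig--MacArthur model in Proposition \ref{prop:conv-rate-polynomial-RM-model}. There is therefore no in-paper proof to compare against. That said, your outline -- a subgeometric Foster--Lyapunov drift $L W_q \lesssim -\kappa W_q^{1-1/q}$ outside a compact subset of $M_+$, small-set minorization from the strong H\"ormander condition and accessibility of $x^*$, and then a subgeometric Harris theorem in the Douc--Fort--Guillin / Hairer style, with the one-parameter family $W_{\beta,q}=W_q^{1-\beta/q}$ produced by Young-type interpolation between the endpoints $\beta=1$ and $\beta=q$ -- is the standard route, and it correctly identifies how each clause of Hypothesis \ref{hyp:main-hyp-strong} enters. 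In particular your reading of the threshold $q_0$ (forcing $-a+\tfrac{q-1}{2}c<0$ in the leading $U^q$ coefficient of $L(V^q)$) and of the bound $q<(q_0+2)/2$ (forcing $\alpha=2q-2<q_0$ so that $L(U^{\alpha})$ still has a useful sign) matches the structure of the imported statement.

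Two small caveats. First, you reproduce the sign as $LV=LU+\sum_i p_i\lambda_i$ from the text of Section \ref{section:convergence-rate}, but since $V=U+\sum_i p_i\ln(1/x_i)$ and $L\bigl(\ln(1/x_i)\bigr)=-\lambda_i$, the correct identity is $LV=LU-\sum_i p_i\lambda_i$; the paper itself uses the correct sign later (the $H$-persistence check reads off $\mu H=-\sum_i p_i\mu\lambda_i<0$), so this is a typographical slip in the paper that you inherited. It does not change the structure of your argument, but it matters if one is actually carrying out the drift computation. Second, your sketch treats the cross terms in $\Gamma(V)$ between $U$ and the logarithmic part, and the contribution of $qV^{q-1}\sum_i p_i\lambda_i$, somewhat loosely ("absorbed by", "tamed by"); these are precisely the places where (\ref{eq:F-i-bounded-U}) and (\ref{eq:LU-F-i-negative}) must be used quantitatively and where the two branches of (\ref{eq:definition-W-q}) are actually derived. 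A complete proof would need that bookkeeping written out. As a blind outline of how the cited theorem is proved, however, the plan is sound and, as far as one can tell from the hypotheses, agrees with the approach of the cited reference.
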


\begin{remark}\label{rem:convergence-rate-implies-total-variation}
    Since $1-\frac{\beta}{q}>0$ and $W_q\geq 1$, (\ref{eq:polynomial-rate-of-convergence}) naturally implies that $$\lim_{t\to\infty}t^{\lambda}||P_t(x,\cdot)-\Pi(\cdot)||_{TV}=0, \quad \forall x\in M_+,$$ where $\lambda = q-1>0$ and in particular $\beta=q$.
\end{remark}

\subsection[Rosenzweig-MacArthur model]{Polynomial convergence rate for degenerate Rosenzweig-MacArthur model}

Condition (\ref{eq:gamma-condition-just-for-convergence-rate}) fails for $U(x)=e^{\theta(x_1+x_2)}$ since $$\Gamma (U(x_1,x_2))=\varepsilon^2\theta^2x_1^2U^2(x_1,x_2).$$

However, another choice of $U$ satisfies both (\ref{eq:hyp-LU}) and (\ref{eq:gamma-condition-just-for-convergence-rate}) such as $U(x)=1+(x_1+x_2)^n$, for any $n>2$ (see \cite{Benaim2018_Persistence}, Section 5). Under the assumption that $0<\varepsilon^2<2$ and $\Lambda(\varepsilon, \alpha,\kappa)>0$, condition (\ref{eq:Hofbauer-cond}) from Theorem \ref{thm:invasion-criterion} also holds true.

\begin{proposition}\label{prop:conv-rate-polynomial-RM-model}
 Hypothesis \ref{hyp:main-hyp-strong} holds and for all $x\in M_+$, $(P_t(x,\cdot))_{t\geq0}$ converges in Total variation towards $\Pi$ at a polynomial rate.
\end{proposition}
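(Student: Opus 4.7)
The plan is to invoke Theorem \ref{thm:polynomial-conv-rate} together with Remark \ref{rem:convergence-rate-implies-total-variation}: once Hypothesis \ref{hyp:main-hyp-strong} is verified, the other inputs needed by Theorem \ref{thm:polynomial-conv-rate} have already been established in Proposition \ref{prop:stoch-persistence-rosenzweig-MacArthur-model} (the invasion criterion (\ref{eq:Hofbauer-cond})) and in Propositions \ref{prop:strong-hormander-condition-rosenzweig-MacArthur-model}--\ref{prop:accessibility-rosenzweig-MacArthur-model} (the existence of an accessible point in $M_+$ satisfying the strong Hörmander condition). As already observed, the exponential $U$ from Proposition \ref{prop:main-hypotheses-hold-rosenzweig-MacArthur-model} fails (\ref{eq:gamma-condition-just-for-convergence-rate}), so the first task is to switch to the polynomial Lyapunov function $U(x_1,x_2)=1+(x_1+x_2)^n$ for some $n>2$ and check every item of Hypothesis \ref{hyp:main-hyp-strong} for this choice.

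The computational heart is $LU$. Using $\partial_{x_i}U=n(x_1+x_2)^{n-1}$ and $\partial_{x_1}^2 U=n(n-1)(x_1+x_2)^{n-2}$, together with $a_{11}=\varepsilon^2$ and the cancellation of the $\pm x_1x_2/(1+x_1)$ terms in $x_1F_1+x_2F_2$, one obtains
\begin{equation*}
LU(x)=n(x_1+x_2)^{n-1}\Bigl[x_1-\tfrac{x_1^2}{\kappa}-\alpha x_2\Bigr]+\tfrac{\varepsilon^2 n(n-1)}{2}x_1^2(x_1+x_2)^{n-2}.
\end{equation*}
Setting $r=x_1+x_2$ and $\delta=x_1/r\in[0,1]$, the stabilizing contribution $-n\delta^2 r^{n+1}/\kappa$ dominates when $\delta$ stays bounded below, while $-n\alpha(1-\delta)r^n$ takes over when $\delta\to 0$; in either regime both the positive term $n\delta r^n$ and the curvature contribution $\tfrac{\varepsilon^2 n(n-1)}{2}\delta^2 r^n$ are absorbed once $r$ is large, yielding Hypothesis \ref{hyp:main-hyp} with $a$ of order $n\alpha/2$ and some finite $b$. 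Since $|F_1(x)|\leq 1+x_1/\kappa+x_2$ and $|F_2(x)|\leq 1+\alpha$ show $\sum_i|F_i|=O(r)$, the linear perturbation $p_0\sum_i|F_i|$ is dominated by $LU=O(-r^n)$, giving (\ref{eq:LU-F-i-negative}) for any $p_0>0$.

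The remaining checks are immediate: since only $a_{11}=\varepsilon^2$ is nonzero,
\begin{equation*}
\Gamma(U)(x)=\varepsilon^2 n^2 x_1^2(x_1+x_2)^{2n-2}\leq \varepsilon^2 n^2(x_1+x_2)^{2n}\leq \varepsilon^2 n^2\,U(x)^2,
\end{equation*}
confirming (\ref{eq:gamma-condition-just-for-convergence-rate}); (\ref{eq:U-ln-x-positive}) is trivial as $U$ grows polynomially; and (\ref{eq:F-i-bounded-U}) holds with $d_0=1$ via $\sum_i|F_i(x)|\leq C_1(1+x_1+x_2)\leq C_2(1+(x_1+x_2)^n)=C_2 U(x)$ for $n\geq 1$. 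With Hypothesis \ref{hyp:main-hyp-strong} thus secured, Theorem \ref{thm:polynomial-conv-rate} applied with $\beta=q$ for some $q\in\bigl]1,\min\{q_0,(q_0+2)/2\}\bigr[$ combined with Remark \ref{rem:convergence-rate-implies-total-variation} delivers
\begin{equation*}
\lim_{t\to\infty}t^{\lambda}\lVert P_t(x,\cdot)-\Pi(\cdot)\rVert_{TV}=0,\quad \forall x\in M_+,
\end{equation*}
with $\lambda=q-1>0$, completing the proof. The only non-routine step is the regime analysis behind (\ref{eq:hyp-LU}) and (\ref{eq:LU-F-i-negative}): one has to confirm that the curvature term $x_1^2(x_1+x_2)^{n-2}$ is always swallowed by the stabilizing bracket, for which the exponent margin provided by $n>2$ is essential — splitting the analysis on whether $x_1\leq\sqrt r$ or $x_1\geq\sqrt r$ makes the inequality explicit in either regime.
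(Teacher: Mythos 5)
Your proposal is correct and follows essentially the same route as the paper: replace the exponential Lyapunov function by $U(x)=1+(x_1+x_2)^n$ with $n>2$, verify the four clauses of Hypothesis~\ref{hyp:main-hyp-strong} for it, and apply Theorem~\ref{thm:polynomial-conv-rate} with Remark~\ref{rem:convergence-rate-implies-total-variation}. The only difference is presentational — you compute $LU$ and do the regime analysis explicitly, whereas the paper cites Section~5 of \cite{Benaim2018_Persistence} for the drift condition and then verifies (\ref{eq:LU-F-i-negative}) directly from $LU\leq -aU+b$ together with the linear bound on $\sum_i|F_i|$; one small inaccuracy in your commentary is the assertion that $n>2$ is ``essential'' to absorb the curvature term $\tfrac{\varepsilon^2 n(n-1)}{2}\delta^2 r^n$ — in your $\delta,r$ parametrization that absorption works for any $n\geq 1$ once $r$ is large, and the $n>2$ margin in the paper is not forced by this step.
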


\begin{proof}
Remark that $$\Gamma  (U(x_1,x_2))=\varepsilon^2n^2x_1^2(x_1+x_2)^{2n-2}\leq \varepsilon^2n^2 U^2(x_1,x_2).$$

Condition (\ref{eq:U-ln-x-positive}) is direct since $\ln(||x||)=\frac{1}{2}\ln(x_1^2+x_2^2)\ll x_1^n+x_2^n\leq U(x),$ since $n>2$. 

We also recall that $|F_1(x)|+|F_2(x)|\leq(2+\frac{1}{\kappa}+\alpha)(1+x_1+x_2),$ so that (\ref{eq:F-i-bounded-U}) is verified for $d_0=1$ and $C>2+\frac{1}{\kappa}+\alpha$. In particular, since $LU\leq -a U +b$, for $0<\delta <a$, choose $p_0<\frac{a-\delta}{C}$ such that $$LU(x)+p_0(|F_1(x)|+|F_2(x)|)<-\delta U(x)+b \underset{||x||\to\infty}{\longrightarrow} -\infty,$$ since $U$ is proper, which proves that (\ref{eq:LU-F-i-negative}) is verified.

By Theorem \ref{thm:polynomial-conv-rate}, the convergence rate of $(P_t)_{t\geq 0}$ towards $\Pi$ is polynomial.
\end{proof}

\begin{remark}
We can even provide a precise estimate of the speed of convergence.\\

Since Hypothesis \ref{hyp:main-hyp} holds with $a=\frac{\alpha n}{2}$ and $c=\varepsilon^2n^2$ for any $n>2$. It yields $$q_0=1+\frac{\alpha}{\varepsilon^2n}.$$ Consequently, we obtain
\begin{equation*}
 \min\Bigg\{q_0, \ \frac{q_0+2}{2}\Bigg\}=
 \begin{cases}
  q_0=1+\frac{\alpha}{\varepsilon^2n}, &\text{if } \frac{\alpha}{\varepsilon^2}<\frac{n}{2}, \\
  \frac{q_0+2}{2}=\frac{3}{2}+\frac{\alpha}{2\varepsilon^2n}, &\text{if } \frac{\alpha}{\varepsilon^2}>\frac{n}{2}.
 \end{cases}
\end{equation*}
The speed parameter $\beta>0$ from (\ref{eq:polynomial-rate-of-convergence}) follows
\begin{equation*}
 0<\beta-1<
 \begin{cases}
  \frac{\alpha}{\varepsilon^2n}, &\text{if } \frac{\alpha}{\varepsilon^2}<\frac{n}{2}, \\
  \frac{1}{2}+\frac{\alpha}{2\varepsilon^2n}, &\text{if } \frac{\alpha}{\varepsilon^2}>\frac{n}{2}.
 \end{cases}
\end{equation*}

In the first case, for any parameters $\alpha$, $\varepsilon$ in (\ref{eq:rosenzweig-MacArthur-model-in-details}), we can choose $n$ large enough so that $\frac{\alpha}{\varepsilon^2}<\frac{n}{2}$. In the second case, since $n>2$, the condition is verified when $\frac{\alpha}{\varepsilon^2}>1$ or $\alpha > \varepsilon^2$.
\end{remark}

An additional condition to Hypothesis \ref{hyp:main-hyp-strong} to ensure an exponential convergence rate exposed in \cite{Benaim_Antoine_2022} is a stronger version of (\ref{eq:LU-F-i-negative}),
\begin{equation}\label{eq:stronger-LU-F-i-control}
 \limsup_{||x||\to\infty} \left( L\ln(U)+p_0\sum_{i=1}^n |F_i(x)|\right)<0, \quad \text{for some } p_0>0.
\end{equation} Unfortunately, we observe that $$L\ln(U)=\frac{LU}{U}-\frac{1}{U^2}\Gamma (U),$$ and one can verify that the first term $\frac{LU}{U}$ is of order $x_1$ while the second one is constant such that we cannot control $\sum_{i=1}^n |F_i(x)|$, which is of order $x_1+x_2$. In particular, when $x_1\to 0$ and $x_2\to\infty$, we cannot verify (\ref{eq:stronger-LU-F-i-control}) whenever $p_0>0$ is small.

These types of conditions are similar to those stated in \cite{Hening2018_Coexistence} to ensure exponential convergence rate when the extinction set is non-compact as in our situation.

\section{Proof of Theorem \ref{thm:rosenzweig-MacArthur-model-persistence}}\label{section:appendix-mainresults}

Note that Hypothesis \ref{hyp:main-hyp} is already verified by Proposition \ref{prop:main-hypotheses-hold-rosenzweig-MacArthur-model}, while the $H-$persistence condition is also verified by Proposition \ref{prop:stoch-persistence-rosenzweig-MacArthur-model}.\\

\textit{\textbf{(i)}} The strong Hörmander condition holds for (\ref{eq:rosenzweig-MacArthur-model-in-details}) on all $ M_+$ by Proposition \ref{prop:strong-hormander-condition-rosenzweig-MacArthur-model}, and every points of $ M_+$ is accessible from $ M_+$ by Proposition \ref{prop:accessibility-rosenzweig-MacArthur-model}. Following Corollary \ref{cor:convergence-as-tv-hormander}, for every point $x\in  M_+$,$$\Pi^{x}_{t}\Rightarrow\Pi, \quad \mathbb{P}-\text{almost surely }.$$ 

\textit{\textbf{(ii)}} The first convergence result is a direct consequence of Corollary \ref{cor:convergence-as-tv-hormander}\textit{\textbf{(ii)}}, where the additional condition $\int_0^T f(X_s^x)ds$ for all $T>0$ completes the proof of Proposition 4.8 in \cite{Benaim2018_Persistence}. 

In addition, by Proposition \ref{prop:conditions-hypotheses-hold}\textit{\textbf{(i)}}, even if we lack information about the persistence measure $\Pi$, we know that $\Pi$ admits an exponential moment of order $\theta$, hence its tails decay at least exponentially, since $$\int_{M} e^{\theta(x_1+x_2)}\Pi(\mathrm dx_1,\mathrm dx_2)<\infty.$$

\textit{\textbf{(iii)}} Also by Corollary \ref{cor:convergence-as-tv-hormander}, for all $x\in M_+$, $(P_t(x,\cdot))_{t\geq 0}$ converges to $\Pi$ in Total variation.\\

For the convergence rate, we rely on Proposition \ref{prop:conv-rate-polynomial-RM-model} to show that the conditions of Theorem \ref{thm:polynomial-conv-rate} hold. It follows that the convergence rate is polynomial since there exists $x^*\in \Gamma_{ M_+}\cap  M_+$ satisfying the strong Hörmander condition.\\

\textit{\textbf{(iv)}} As stated in Corollary \ref{cor:convergence-as-tv-hormander}, we automatically observe $\Pi\ll \lambda$. \qed

\appendix

\section{Appendix}\label{section:appendix}

\subsection{Proof of Proposition \ref{prop:hyp-LU-implies}}\label{appendix-proof-hyp-LU-implies}

Since the drift and the covariance are supposed to be locally Lipschitz continuous, we can use classical results on stochastic differential equations such that there exists for any $x\in M$ a unique continuous process
$(X_t)_{t\geq 0}$ defined on some interval $[0,\tau^x[$ solution to (\ref{eq:kolmogorov}), with initial condition
$X_0=x$ and such that $t<\tau^x\iff \|X_t\|<\infty$ (see e.g. \cite{SDE-on-Manifold}, Theorem 1.1.8), denoted as $(X_t^x)_{0\leq t<\tau^x}$.

Furthermore, applying Itô’s formula to $Y_{t,i}^x:=\ln(X_{t,i}^x)$, rearranging the terms, and using the uniqueness of the solutions, then
$$X^x_{t,i} = x_i\exp\Bigl(\int_0^t\bigl[F_i(X^x_s)-\tfrac12a_{ii}(X^x_s)\bigr] ds +\sum_j\int_0^t\Sigma^j_i(X_s) dB^j_s\Bigr).$$
Thus
\begin{equation}\label{eq:solution-positive-starting-out-zero}
 x_i>0 \Longrightarrow X^x_{t,i}>0, \ \forall t\in[0,\tau^x[,
\end{equation}
and
\begin{equation}\label{eq:solution-null-starting-zero}
 x_i=0 \Longrightarrow X^x_{t,i}=0\ \forall t\in[0,\tau^x[.
\end{equation}

Now, we prove that $\tau^x=\infty$. For any $C^2$ function $\psi\colon M\to\mathbb{R}$, by Itô’s formula, 
\begin{equation}\label{eq:ito-for-U-function}
 \psi\bigl(X^x_t\bigr)-\psi(x)-\int_0^t L\psi\bigl(X^x_s\bigr) ds =\sum_{i=1}^n\int_0^t\frac{\partial\psi}{\partial x_i}\bigl(X^x_s\bigr) \left[X^x_{s,i}\sum_{j=1}^m\Sigma^j_i\bigl(X^x_s\bigr)\right] dB^j_s.
\end{equation}

Let $\tau_k^x=\inf\{t\ge0:U(X^x_t)\geq k\},\ k\in\mathbb{N}$. It follows Hypothesis \ref{hyp:main-hyp} and (\ref{eq:ito-for-U-function}) that for all $x\in M$,
\begin{align*}
 k \mathbb{P}\left(\tau_k^x\leq t\right)
 &=\mathbb{E}\left[U\left(X^x_{\tau_k^x}\right)\mathbf{1}_{\{\tau_k^x\leq t\}}\right]\\
 &\le\mathbb{E}\left[U\left(X^x_{t\wedge\tau_k^x}\right)\right]\\
 &= U(x)+\mathbb{E}\left[\int_0^{t\wedge\tau_k^x} LU\left(X^x_s\right) ds\right]\\
 &\leq U(x)-a \mathbb{E}\left[\int_0^{t\wedge\tau_k^x}U\left(X^x_s\right) ds\right]+b t\\
 &\leq U(x)+b t.
\end{align*}
Hence $$ \mathbb{P}(\tau^x\leq t) =\mathbb{P}\Bigl(\cap_{k\ge0}\{\tau_k^x\leq t\}\Bigr) =\lim_{k\to\infty}\mathbb{P}(\tau_k^x\leq t) =0, $$ proving that $\tau^x=\infty$ almost surely.

We now let $(P_t)_{t\geq 0}$ denote the semigroup acting on bounded (respectively non‑negative) measurable functions $f\colon M\to\mathbb{R}$, defined by $P_t f(x)=\mathbb{E}\bigl(f(X^x_t)\bigr).$ Then, $C_b( M)$–Feller continuity just follows from the dominated convergence theorem and the continuity in $x$ of the solution $(X^x_t)_{t\geq 0}$, which implies that $x\mapsto P_tf(x)$ is continuous for every bounded continuous $f$.\qed

\begin{remark}\label{rem:invariant-extinction-set}
 From \eqref{eq:solution-positive-starting-out-zero} and \eqref{eq:solution-null-starting-zero}, it follows automatically that $M_+^I$ and $M_0^I$ are invariant under $(P_t)_{t\geq 0}$, for all $I\subset\{1,\cdots,n\}$.
\end{remark}

\subsection{Proof of Proposition \ref{prop:conditions-hypotheses-hold}}

Before stating the proof of Proposition \ref{prop:conditions-hypotheses-hold}, we introduce some useful intermediary results. The following proposition is the continuous version of a well-known one for discrete–time Feller chains, and its proof is directly based on the discrete-time case.

\begin{proposition}\label{prop:almost sure-limit-point-proba-inv}
 For all $x\in M$, almost surely, every limit point of $\bigl(\Pi^{x}_{t}\bigr)_{t\geq 0}$ lies in $\mathcal{P}_{\mathrm{inv}} ( M)$ $\mathbb P$-almost surely.
\end{proposition}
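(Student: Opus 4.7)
The strategy is classical: establish pathwise that the empirical occupation measure is asymptotically invariant under every fixed $P_s$, then use separability and the Feller property to combine the events over a countable collection of $(s,f)$.

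\textbf{Step 1 (Key pathwise estimate).} I will show that for every $f\in C_b(M)$ and every $s>0$,
$$\Pi^x_t(P_s f)-\Pi^x_t(f)\underset{t\to\infty}{\longrightarrow}0\quad\mathbb{P}\text{-a.s.}$$
Using the Markov property $P_s f(X^x_u)=\mathbb{E}[f(X^x_{u+s})\mid\mathcal{F}_u]$, decompose
$$\int_0^t\bigl[P_s f(X^x_u)-f(X^x_u)\bigr]du=-N_t+R_t,$$
where $N_t:=\int_0^t\bigl[f(X^x_{u+s})-\mathbb{E}[f(X^x_{u+s})\mid\mathcal{F}_u]\bigr]du$ and $R_t:=\int_t^{t+s}f(X^x_u)du-\int_0^s f(X^x_u)du$. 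Since $|R_t|\leq 2s\|f\|_\infty$, the remainder $R_t/t\to 0$. For $N_t$, slice $[0,t]$ into blocks of length $s$ and set $\eta_j:=\int_{js}^{(j+1)s}\bigl[f(X^x_{u+s})-\mathbb{E}[f(X^x_{u+s})\mid\mathcal{F}_u]\bigr]du$. A direct tower-property computation gives $\mathbb{E}[\eta_j\mid\mathcal{F}_{js}]=0$ and $|\eta_j|\leq 2s\|f\|_\infty$. The variables $(\eta_j)$ are not adapted to a filtration making them a genuine martingale difference (each $\eta_j$ is $\mathcal{F}_{(j+2)s}$-measurable), but splitting into even- and odd-indexed subsequences yields two bounded martingale difference sequences in the filtrations $\mathcal{G}^{\mathrm{even}}_k:=\mathcal{F}_{2ks}$ and $\mathcal{G}^{\mathrm{odd}}_k:=\mathcal{F}_{(2k+1)s}$. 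The strong law for bounded martingale differences then gives $N_t/t\to 0$ almost surely.

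\textbf{Step 2 (Passing to weak limits).} Fix a countable family $\mathcal{F}\subset C_b(M)$ that is separating for $\mathcal{P}(M)$ (e.g. a countable dense subset of $C_c(M)$ extended suitably) and apply Step 1 simultaneously for all $(s,f)\in\mathbb{Q}_{+}^{*}\times\mathcal{F}$. On the resulting full-measure event, for any subsequence $t_j\to\infty$ with $\Pi^x_{t_j}\Rightarrow\mu$, the Feller property from Proposition \ref{prop:hyp-LU-implies} gives $P_s f\in C_b(M)$, hence
$$\mu(P_s f)=\lim_{j\to\infty}\Pi^x_{t_j}(P_s f)=\lim_{j\to\infty}\Pi^x_{t_j}(f)=\mu(f),\quad f\in\mathcal{F},\ s\in\mathbb{Q}_{+}^{*}.$$
Since $\mathcal{F}$ is separating, $\mu P_s=\mu$ for every $s\in\mathbb{Q}_{+}^{*}$. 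For arbitrary $s>0$, take $s_n\in\mathbb{Q}_{+}^{*}$ with $s_n\to s$; the Feller property and the semigroup identity $P_{s_n}f-P_s f=P_{s\wedge s_n}(P_{|s_n-s|}f-f)$ yield pointwise convergence of $P_{s_n}f$ to $P_s f$ for every $f\in C_b(M)$, so dominated convergence gives $\mu P_{s_n}f\to\mu P_s f$ and therefore $\mu P_s=\mu$. Thus every weak limit point of $(\Pi^x_t)$ lies in $\mathcal{P}_{\mathrm{inv}}(M)$.

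\textbf{Main obstacle.} The only real subtlety is handling $N_t$: since $\eta_j$ depends on the path up to time $(j+2)s$, the sequence is not a single martingale difference, forcing the even/odd splitting. Everything else is a mechanical combination of the Markov property, the Feller regularity already established in Proposition \ref{prop:hyp-LU-implies}, and a countable-intersection argument; the tightness of $(\Pi^x_t)$ needed to actually produce weak limit points is supplied by Proposition \ref{prop:conditions-hypotheses-hold}\textit{\textbf{(iii)}} and so does not require separate treatment here.
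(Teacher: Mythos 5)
Your proof is correct, but it takes a genuinely different route from the paper's. You work directly in continuous time: the Markov property gives the decomposition $\int_0^t[P_sf(X_u)-f(X_u)]\,du=-N_t+R_t$, and you kill $N_t$ by slicing into length-$s$ blocks and splitting into even/odd subsequences to obtain two bona fide bounded martingale-difference arrays (correctly diagnosing that a single array would fail adaptedness since $\eta_j$ is only $\mathcal{F}_{(j+2)s}$-measurable). The paper instead passes through the $1$-resolvent kernel $G$: it samples the process at i.i.d.\ exponential times to get a discrete Feller chain with transition kernel $G$, invokes the known discrete-time result that limit points of empirical measures are $G$-invariant (and hence $(P_t)$-invariant), and then proves that $(\Pi^x_t)$ and the sampled empirical measures $(\widehat\Pi^x_{\lfloor t\rfloor})$ share the same weak limit points, again via a bounded-increment martingale SLLN. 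Both arguments ultimately rest on the same engine (martingale SLLN plus Feller continuity plus a countable separating family), but yours is more self-contained and elementary, whereas the paper's is shorter by delegating the core invariance step to cited discrete-time results and requires the resolvent-sampling machinery. Your Step~2 (upgrade from rational $s$ to all $s>0$) is handled correctly: pointwise convergence of $P_{s_n}f$ and dominated convergence under $\mu$ suffice. You are also right that tightness is not needed in this proposition and is supplied separately.
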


\begin{proof}
Given $G$ be the \emph{1–resolvent} (\ref{eq:1-dim-resolvent-kernel}), let $(Y^{x}_{n})_{n\geq 0}$ be the discrete time chain obtained by sampling the continuous one $(X_{t})_{t\geq 0}$ at random times exponentially distributed, in the sense that
\begin{equation}\label{eq:sampled-discrete-time-chain}
 Y_n^x=X_{T_n}^x,
\end{equation}
where $T_{0}=0, \cdots, T_{n+1}=T_{n}+U_{n+1}$ and $U_1, U_2, \cdots$ is a sequence of independent identically distributed random variables having an exponential distribution with parameter $1$ and independent of $(X^{x}_{t})_{t\geq 0}$. We set
$$\widehat\Pi^{x}_{n}=\frac{1}{n}\sum_{k=0}^{n-1}\delta_{Y^{x}_{k}},$$ 
the empirical measure of the discrete chain $(Y^{x}_{n})_{n\geq 0}$, where $\delta_z$ is the Dirac measure that assigns mass $1$ to $\{z\}$. 

For a discrete–time Feller chain, it is classical that the limit points of the empirical measures are invariant (see e.g. \cite{Benaim2022_Markov}, Theorem 4.20). Since it is invariant with respect to its kernel $G$, limit points of $(\widehat\Pi^{x}_{n})_{n\geq 0}$ lie almost surely in $\mathcal{P}_{\mathrm{inv}}( M)$ (see e.g. \cite{Benaim2022_Markov}, Proposition 4.57).\\

We now show that $(\Pi^{x}_{t})_{t\geq 0}$ and $(\widehat\Pi^{x}_{\lfloor t\rfloor})_{t\geq 0}$ possess the same limit points almost surely, where $\lfloor \cdot \rfloor$ is the floor function. On $ M$, there exists a countable family $\{f_{n}\}_{n\geq 0}\subset C_{b}( M)$ such that
$$D(\mu,\nu):=\sum_{k\geq 0}\frac{1}{2^{k}} \min\bigl(|\mu f_{k}-\nu f_{k}|,1\bigr),$$
is a distance on $\mathcal P( M)$ that metricizes the topology of weak convergence on $\mathcal{P}( M)$ (see e.g. \cite{Benaim2022_Markov}, Proposition 4.5). That is, $$\mu_n \Rightarrow \mu \ \Leftrightarrow \ D(\mu_n,\mu)\underset{n\to\infty}{\longrightarrow} 0.$$ By Proposition 4.58 in \cite{Benaim2022_Markov}, for every $f\in C_{b}( M)$, $$\lim_{t\to\infty} \frac{1}{t} \int_0^tf(X_s)ds-\frac{1}{\lfloor t\rfloor}\sum_{k=0}^{\lfloor t\rfloor -1}Gf(Y_k)=0,$$ almost surely, which is $$\lim_{t\to\infty}\bigl|\Pi^{x}_{t}(f)-\widehat\Pi^{x}_{\lfloor t\rfloor}Gf\bigr|=0\quad\text{a.s.}$$

Moreover, let $$M_n:= \sum_{k=0}^{n-1}f(Y_{k+1})-Gf(Y_k),$$ be a bounded martingale since $(M_n)_{n\geq 0}$ is adapted to the natural filtration $(\mathcal F_n)_{n\geq 0}$ defined by $\mathcal F_n = \sigma(Y_0, \cdots, Y_n)$, $\forall n\geq 0$, and the increments $D_{k+1}:=f(Y_{k+1})-Gf(Y_k)$ has zero expectation, so $$\mathbb{E}(M_{n+1}|\mathcal{F}_n)=M_n+\mathbb E\left[f(Y_{n+1})-Gf(Y_n)\right|\mathcal F_n]=M_n+Gf(Y_n)-Gf(Y_n)=M_n.$$ By the strong law of large number for martingales with bounded increments (see e.g. \cite{Hall-Heyde-1980}, Theorem 2.18), it follows that
$$\lim_{n\to\infty}\bigl|\widehat\Pi^{x}_{n}f-\widehat\Pi^{x}_{n}Gf\bigr| = \lim_{n\to\infty} \frac{M_n}{n} =0,\quad\mathbb P-\text{a.s.}$$

We now obtain $$\lim_{t\to\infty}\Big| \hat \Pi_t^x(f)-\hat \Pi_{\lfloor t\rfloor}^x f\Big|\leq \lim_{t\to\infty}\Big| \Pi_t^x(f)-\hat \Pi_{\lfloor t\rfloor}^x Gf\Big| + \Big|\hat \Pi_{\lfloor t\rfloor}^x Gf-\hat \Pi_{\lfloor t\rfloor}^x f\Big|=0,$$ therefore by dominated convergence, $\lim_{t\to\infty}D\bigl(\Pi^{x}_{t},\widehat\Pi^{x}_{\lfloor t\rfloor}\bigr)=0$ almost surely and the two families share the same limit points, which must be invariant.
\end{proof}

\begin{proof}[\textbf{Proof of Proposition \ref{prop:extended-carre-du-champ-strong-law}}]
\textit{\textbf{(i)}} $\Rightarrow$ \textit{\textbf{(ii)}}: Fix $ x \in \mathcal M $ and, to shorten notation, set $ M_t = M^f_t(x) $ the local martingale induced by $f$ from (\ref{eq:extended-generator-implies}) with $ (\tau_n)_{n \geq 0} $ be its localizing sequence. Let $ M^n_t = M_{t \wedge \tau_n} $. Then, for $ t $ sufficiently large,
$$ \mathbb{E}\left[(M^n_t)^2\right] = \mathbb{E} \left[ \int_0^{t \wedge \tau_n} \Gamma^{\mathcal M}_e(f)(X^x_s) ds \right] \leq \int_0^t P_s \Gamma^{\mathcal M}_e(f)(x) ds \leq C_x t, $$
for some constant $ C_x $ using Fubini's theorem, the fact that $t\wedge \tau_n \leq t$, and $\Gamma_e^{\mathcal M}(f) \geq 0$. The sequence $ (M^n_t)_{n \geq 1} $ is then bounded in $ L^2 $, hence uniformly integrable. It therefore converges in $ L^1 $, as $ n \to \infty $, toward $ M_t $. Since $ (M^n_t)_{t \geq 0} $ is a martingale, by dominated convergence theorem, the $ L^1 $-convergence passes to the limit in the martingale property and $$ \mathbb{E}(M_t \mid \mathcal{F}_s) = \mathbb{E}(\lim_{n\to\infty }M^n_t \mid \mathcal{F}_s) = \lim_{n\to\infty } \mathbb{E}(M^n_t \mid \mathcal{F}_s) = \lim_{n\to\infty } M^n_s = M_s, \quad \text{for } 0 \leq s \leq t.$$

This proves that $ (M_t)_{t \geq 0} $ is a martingale. By Fatou's lemma, the previous inequality also implies that $\mathbb{E}[M_t^2] \leq C_x t,$ so that (\ref{eq:square-martingale}) is satisfied. \\

\textit{\textbf{(ii)}} $ \Rightarrow $ \textit{\textbf{(iii)}}: For all integers $ n $ and $ \varepsilon > 0 $, Doob's inequality for right-continuous martingales (see e.g. \cite{LeGall2016_Brownian}, Proposition 3.15) implies that
\begin{align*}
 \mathbb{P}_{x} \left( \sup_{2^n \leq t \leq 2^{n+1}} \frac{|M_t|}{t} \geq \varepsilon \right) &\leq \mathbb{P}_{x} \left( \sup_{t \leq 2^{n+1}} |M_t| \geq \varepsilon 2^n \right) \\
 &\leq \frac{\mathbb{E}_x\left[ \sup_{t \leq 2^{n+1}} |M_{t}|^2\right]}{\varepsilon^2 2^{2n}} \\
 &\leq \frac{4 C_x 2^{n+1}}{\varepsilon^2 2^{2n}} \\
 &\leq \frac{8 C_x}{\varepsilon^2 2^{n}},
\end{align*}
where the second inequality follows Markov's inequality, and the third one Doob’s inequality in $L^2$. Thus, $ \limsup_{t\to\infty }\frac{M_t}{t} = 0 $, $ \mathbb{P}_{x} $-almost surely by Borel–Cantelli lemma.
\end{proof}

We now are able to state the proof of Proposition \ref{prop:conditions-hypotheses-hold}.

\begin{proof}[\textbf{Proof of Proposition \ref{prop:conditions-hypotheses-hold}}]

\textit{\textbf{(i)}} follows directly from Lemma 7.26 and \textit{\textbf{(ii)}} by applying Corollary 4.23 in \cite{Benaim2022_Markov} with $\kappa=(1-e^{-a})\frac{b}{a}$, $\rho=e^{-a}$, so we only detail the proof of \textit{\textbf{(iii)}}.

Define $\nu_{n}=\tfrac1n\sum_{k=0}^{n-1}\delta_{X^{x}_{k}}$. By \textit{\textbf{(i)}}, it implies that $$P_{1} U \leq e^{-a}U+\frac{b}{a}.$$

Using \cite{Benaim2022_Markov}, Corollary 4.23, with $\kappa=\frac{b}{a}$ and $\rho=e^{-a}$, we have
\begin{equation}\label{eq:bounded-nu-tilde-W}
    \limsup_{n\to\infty}\nu_{n}\left(\sqrt{U}\right)\leq \frac{\frac{\sqrt{b}}{\sqrt a}}{1-\sqrt{e^{-a}}}.
\end{equation}
Writing $$\Pi^{x}_{n}\left(\sqrt{U}\right)=\frac{1}{n}\int_0^n \sqrt{U(X_s^x)}ds=\frac{1}{n}\sum_{k=0}^{n-1}\bigl[\Delta_{k+1}+\int_{0}^{1}P_{s}\sqrt{U\bigl(X^{x}_{k}}\bigr) \mathrm{d}s\bigr],$$

where $$\Delta_{k+1}=\int_0^1 \left(\sqrt{U(X^x_{k+s})}-P_s\sqrt{U(X_k^x)}\right)ds.$$

By Jensen's inequality, for all $s\geq 0$, we have $$P_s\left(\sqrt{U}\right)\leq \sqrt{P_s(U)}\leq e^{-\frac{\alpha s}{2}}\sqrt{U}+\frac{\sqrt{b}}{\sqrt{a}}\leq \sqrt{U}+\frac{\sqrt{b}}{\sqrt{a}},$$ so that $\int_0^1 P_s\left(\sqrt{U}\right) ds \leq \sqrt{U}+\frac{\sqrt{b}}{\sqrt{a}}$. Thus, $$\Pi^{x}_{n}\left(\sqrt{U}\right)\leq \frac{1}{n}\sum_{k=0}^{n-1}\Delta_{k+1}+\nu_n\left(\sqrt{U}\right)+\frac{\sqrt{b}}{\sqrt{a}}.$$

We now claim that $$\lim_{n\to\infty}\frac{1}{n}\sum_{k=0}^{n-1}\Delta_{k+1} =0,$$ $\mathbb P-$a.s. Combined with (\ref{eq:bounded-nu-tilde-W}), it implies that $$\limsup_{n\to\infty} \Pi^{x}_{n}\left(\sqrt{U}\right) \leq \left( 1+\frac{1}{1-\sqrt{e^{-a}}}\right)\frac{\sqrt{b}}{\sqrt{a}},$$ which concludes the proof. To prove the claim, using Fubini's theorem and Markov property, we observe that $$\mathbb{E}(\Delta_{k+1}\mid\mathcal{F}_{k})=\mathbb E\left(\int_0^1 \left(\sqrt{U(X^x_{k+s})}-P_s\sqrt{U(X_k^x)}\right)ds \mid \mathcal F_k\right)=0,$$ and $$\mathbb E(\Delta^2_{k+1} \mid \mathcal F_k)\leq \int_0^1\mathbb E\left[\left(\sqrt{U(X^x_{k+s})}-P_s\sqrt{U(X_k^x)}\right)^2\mid \mathcal F_k\right]ds,$$ by applying Cauchy-Schwarz and Fubini's theorem. Finally, recalling that the conditional variance is defined by $$\text{Var}(Z\mid \mathcal F)=\mathbb E\left[(Z - \mathbb E(Z\mid \mathcal F))^2\mid \mathcal F\right],$$ and with $\text{Var}(Z\mid \mathcal F)\leq \mathbb E(Z^2\mid \mathcal F)$, we have
\begin{align*}
\mathbb E[\Delta^2_{k+1} \mid \mathcal F_k] &\leq \int_0^1 \mathbb E\left[ \left(\sqrt{U(X^x_{k+s})}-\mathbb E\left[\sqrt{U(X_{k+s}^x)}\mid \mathcal F_k\right]\right)^2\mid \mathcal F_k \right]ds \\
&\leq \int_0^1\mathbb E[U(X_{k+s}^x)\mid \mathcal F_k]ds\\
&=\int_0^1 P_sU(X_k^x)ds\\
&\leq U(X_k^x)+\frac{\sqrt{b}}{\sqrt{a}},
\end{align*}
where $$\text{Var}\left(\sqrt{U(X^x_{k+s})}\mid \mathcal F_k\right)=\mathbb E\left[ \left(\sqrt{U(X^x_{k+s})}-\mathbb E\left[\sqrt{U(X_{k+s}^x)}\mid \mathcal F_k\right]\right)^2\mid \mathcal F_k \right],$$ while the last inequality follows (\ref{eq:hyp-3-b-implies-bounded-PW}) and the fact that $e^{-a}\geq 1-a$.

Using again previous bound together with (\ref{eq:hyp-3-b-implies-bounded-PW}), we have $$\sum_{k=0}^n \mathbb E[\Delta^2_{k+1}] = \sum_{k=0}^n \mathbb E[\mathbb E[\Delta^2_{k+1}|\mathcal F_k]]\leq \sum_{k=0}^n P_kU(x) + n\frac{b}{a}\leq \sum_{k=0}^n e^{-a k}U(x) + 2n\frac{b}{a}.$$ By the strong law of large number for discrete time martingale, it is a sufficient condition to prove the claim (see e.g. \cite{Benaim2022_Markov}, Theorem A.8 (iv)).\\

Tightness of $(\Pi_t^x)_{t\geq 0}$ follows Lemma 9.4 in \cite{Benaim2018_Persistence}, and any of its limit point lies in $\mathcal P_{\mathrm{inv}}( M)$ by Proposition \ref{prop:almost sure-limit-point-proba-inv}. 
\end{proof}

\subsection{Proof of Theorem \ref{thm:invasion-criterion}}\label{appendix-proof-invasion-rate}

\textit{\textbf{(i)}} Since $\mu$ is supposed to be an ergodic probability measure, it follows from Birkhoff ergodic Theorem that $\Pi_t^x\Rightarrow\mu$ for $\mu$ almost every $x$ and $\mathbb P_x-$almost surely. By Proposition \ref{prop:conditions-hypotheses-hold}\textit{\textbf{(iii)}}, using Lemma 9.4 in \cite{Benaim2018_Persistence} leads to $$\mu\left(\sqrt U\right)<\infty.$$ We now use assumption (\ref{eq:growth}) to conclude that $\mu\lambda_i<\infty$ for all $i\in I$ and $\forall \mu\in\mathcal P_{\mathrm{erg}}(M_0^I)$.\\

Let $i\in I$ and remark that $\lambda_i(x)=L(\log(x_i))$, so that the local martingale induced by $\log(x_i)$ verifies $$\frac{1}{t}\int_0^t \lambda_i(x_i^x(s))ds=\frac{\log(x_i^x(t))}{t}-\frac{\log(x_i^x(0))}{t}-\frac{M_t^{\log(x_i)}(x)}{t}.$$ In particular, $\Gamma_e^{i}(\log(x_i))\leq \mathrm{cst}$ so that $\log(x_i)$ satisfies the strong law and for $\mu\in \mathcal P_{\mathrm{erg}}(M_0^{I})$, $$\mu(\lambda_i)=\limsup_{t\to\infty}\frac{1}{t}\int_0^t \lambda_i(x_i^x(s))ds=\limsup_{t\to\infty}\frac{\log(x_i^x(t))}{t}, \quad \forall x\in M_+^{(i)}$$ 

If supp$(\mu)\not\subset M_0^{(i)}$, for $m\geq 1$, let $K_m=\left\{x\in M: x_i>\frac{1}{m} \right\}\cap [-m,m]$, so that $M_+^{(i)}=\cup_{m\geq 1}K_m$ and $M_0^{(i)}=\cap_{m\geq 1}K_m^c$. Since $\mu(M_0^{(i)})=0$, there exists $m^*\geq 1$ such that $\mu(K_m)\geq \frac{1}{2}$ and by Birkhoff ergodic Theorem, $(x_i^x(t))_{t\geq 0}$ visits $K_{m^*}$ infinitely often, for $\mu-$almost every $x$, $\mathbb P_x-$almost surely. 

Since $\log(x_i)$ is bounded on $K_{m^*}$, then $\mu\lambda_i=0$ on $K_{m^*}$, which extends to $M_+^{(i)}=\cup_{m\geq 1}K_m$. By taking the contrapositive, it follows that $\mu\lambda_i\neq 0$ implies that supp($\mu)\subset M_0^{(i)}$.\\

\textit{\textbf{(ii)}} We now suppose (\ref{eq:Hofbauer-cond}) and we show that conditions from Definition \ref{def:H-persistence} are verified. Let $h(u)=\log\left(\frac{1}{u}\right)$, $v:\mathbb R\to \mathbb R_+$ a $C^{\infty}$ function with bounded $v', \ v''$ such that $v(t)=t, \ \forall t\geq 1$, and $$V(x):=v\left(\sum_{i\in I} p_i h(x_i)\right).$$ 

Remark that $V$ coincides with $\sum_{i\in I} p_i h(x_i)$ on the subset $\{x\in M_+^I:\sum_{i\in I} p_i h(x_i)>1\}$. In particular, $V(x)\to\infty$ as $x_i\to 0$ for $i\in I$ since $h(u)\underset{u\to0}{\longrightarrow} \infty$ and $v(t)\underset{t\to\infty}{\longrightarrow}\infty$, so that $V$ is not defined on $M_0^I$.

From Proposition \ref{prop:domain-definition-c2}, it yields $$H|_{M_+^I}(x)=LV(x)=v'\left(\sum_{i\in I} p_ih(x_i)\right)\left(-\sum_{i\in I} p_i\lambda_i(x)\right)+\frac{1}{2}v''\left(\sum_{i\in I} p_ih(x_i)\right)\langle a(x)p, p\rangle_{\mathbb R^n},$$ for all $x\in M_+^I$.Then, $H|_{M_+^I}$ coincides $-\sum_{i\in I} p_i \lambda_i(x)$) on $\{x\in M_+^I:\sum_{i\in I} p_i h(x_i)>1\}$.

Moreover, $H$ extends continuously on $M_0^I$. Indeed, let $i\in I$ such that $x_i\to 0$. Since $h(x_i)\to\infty$, it yields $\sum_{i\in I} p_ih(x_i)>1$ and in particular, $H$ coincides with $-\sum_{i\in I} p_i \lambda_i(x)$ on $M_0^I$.\\

Then, $|H|\leq \mathrm{cst}\left( \sum_{i\in I} p_i |\lambda_i|+\sum_{i\in I} p_i^2 \right)$ which implies that $\frac{\sqrt{U}}{1+|H|}$ is proper since $$\frac{\sqrt{U}}{1+|H|}\geq C\cdot \frac{\sqrt{U}}{1+\sum_{i\in I}|F_i|}.$$ In particular, we also showed that $V\in \mathcal D_e^{2,I}$ by Proposition \ref{prop:domain-definition-c2} since $V$ is $C^2$ on $M_+$. Moreover, $V$ is a positive continuous function by construction and $$\Gamma_eV(x) = \sum_{i,j\in I} a_{ij}(x)p_ip_j\leq \mathrm{cst},$$ since $\Sigma_i^j$ is bounded, so $V$ satisfies the strong law by Corollary \ref{cor:hyp-3-implies}.

Then, for any $\mu\in\mathcal P_{erg}(M_0^I)$, condition (\ref{eq:Hofbauer-cond}) implies that $\mu H = -\sum_{i\in I} p_i\mu\lambda_i <0$ since $\mu$ charges $M_0^I$ and $\{(X_t^x)_{t\geq 0}: x\in M_+^I\}$ is $H-$persistent. \qed

\bibliographystyle{alpha}
\bibliography{biblio.bib}
\end{document}